\tikzstyle{nodo}=[circle,draw,fill,inner sep=0pt,minimum size=%
\tikzstyle{infinito}=[circle,inner sep=0pt,minimum size=0mm]
\tikzstyle{nodo}=[circle,draw,fill,inner sep=0pt,minimum size=0.5*width("k")]
\tikzstyle{infinito}=[circle,inner sep=0pt,minimum size=0mm]
\tikzset{every loop/.style={min distance=10mm,in=300,out=240,looseness=10}}
\tikzset{place/.style={circle,thick,draw=blue!75,fill=blue!20,minimum
		size=6mm}}
\tikzset{place2/.style={circle,thick,draw=red!75,fill=red!20,minimum
		size=6mm}}
\newtheorem{theorem}{Theorem}[section]
\newtheorem{proposition}[theorem]{Proposition}
\newtheorem{lemma}[theorem]{Lemma}
\theoremstyle{remark}
\newtheorem{remark}[theorem]{Remark}
\newtheorem*{remark*}{Remark}
\theoremstyle{definition}
\newcommand\N{{\mathbb N}}
\newcommand\R{{\mathbb R}}
\newcommand\Z{{\mathbb Z}}
\newcommand\NN{{\mathcal N}}
\newcommand\JJ{{\mathcal J}}
\newcommand\EE{{\mathcal E}}
\newcommand{\G}{\mathcal{G}}
\newcommand\wJ{{\widetilde J}}
\renewcommand\v{\textsc{v}}
\newcommand\f{\frac}
\newcommand\Hmu{{H_\mu^1}}
\title[Singular limit of periodic metric grids]{Singular limit of periodic metric grids}
\author[S. Dovetta]{Simone Dovetta}
\address[S. Dovetta]{Politecnico di Torino, Dipartimento di Scienze Matematiche ``G.L. Lagrange'', Corso Duca degli Abruzzi 24, 10129 - Torino, Italy.} 
\email{simone.dovetta@polito.it}
\begin{document}
	
\begin{abstract}
	We investigate the asymptotic behaviour of nonlinear Schr\"odinger ground states on $d$--dimensional periodic metric grids in the limit for the length of the edges going to zero. We prove that suitable piecewise--affine extensions of such states converge strongly in $H^1(\R^d)$ to the corresponding ground states on $\R^d$. As an application of such convergence results, qualitative properties of ground states and multiplicity results for fixed mass critical points of the energy on grids are derived. Moreover, we compare optimal constants in $d$--dimensional Gagliardo--Nirenberg inequalities on $\R^d$ and on grids. For $L^2$--critical and supercritical powers, we show that the value of such constants on grids is strictly related to that on $\R^d$ but, contrary to $\R^d$, constants on grids are not attained. The proofs of these results combine purely variational arguments with new Gagliardo--Nirenberg inequalities on grids.
\end{abstract}

\maketitle

\section{Introduction}

A $d$--dimensional periodic metric grid is a noncompact metric graph obtained by gluing together in a $\Z^d$--periodic pattern infinitely many copies of a given compact graph (see e.g. Figure \ref{fig:grid}). The peculiarity of such structures is the combination of a one--dimensional microscale, common to every metric graph, with a higher dimensional macroscale. Heuristically, this suggests that, in the limit for the length of the edges going to zero, periodic grids may give a locally--one dimensional approximation of $\R^d$, and one may imagine to exploit this fact to design one--dimensional models capable to mimic the dynamics of systems in higher dimension. Suppose we are interested in a model defined on $\R^d$ and we are able to construct another model on grids whose behaviour, when the edges of the graph are sufficiently small, is close to that of the original one. The graph approximation can then provide a low dimensional tool for the analysis of a $d$--dimensional phenomenon. This can be seen as an analogue of the classical approximation with point grids, with the possible advantage of a microscale finer than the distance between the vertices of the lattice. Though currently out of reach, in this sense one may even envisage tackling open problems in higher dimensions relying on the potential gain of a locally one--dimensional framework. Conversely, assume we are interested in a model on grids for which we can identify a counterpart in $\R^d$  somehow close to the original model when the edgelength of the grid is small enough. One can then interpret the $d$--dimensional model as an effective description of that on grids. When the behaviour in $\R^d$ is well--understood, this may be helpful to derive qualitative information on the dynamics on the network, whose direct investigation is often hindered by the complexity of the structure.  

Such a prospect raises the following question: given a model in $\R^d$ and a corresponding one on metric grids with vanishing edgelength, is it possible to prove that solutions of the latter converge in some sense to those of the former?

Of course, whether this can be done turns out to depend on the specific problem under exam. The present paper aims at initiating the investigations in this direction. Here we focus on a class of nonlinear PDEs that, also due to their prominent role in a variety of applications, have been widely studied by now both on metric graphs and on domains of $\R^d$: nonlinear Schr\"odinger equations.

In $\R^d$, it is well--known (see e.g. \cite{cazenave}) that, for every $p\in (2,2^*)$, where $2^*=\infty$ if $d=2$ and $2^*=\f{2d}{d-2}$ if $d\geq 3$, the stationary NonLinear Schr\"odinger (NLS) equation
\begin{equation}
\label{eq:NLSR2}
	\Delta u + |u|^{p-2}u=\omega u
\end{equation}
admits, for every $\omega>0$, a unique (up to translation) positive solution $u$ decaying at infinity. From a variational point of view, this solution can be characterized in at least two ways. For every $p\in(2,2^*)$, $u$ is an action ground state at frequency $\omega$, i.e. a global minimizer of the action
\[
J_{\omega,\R^d}(u):=\f12\|\nabla u\|_{L^2(\R^d)}^2+\f\omega 2\|u\|_{L^2(\R^d)}^2-\f1p\|u\|_{L^p(\R^d)}^p
\]
restricted to the associated Nehari manifold
\[
\begin{split}
\NN_{\omega,\R^d}:&=\left\{u\in H^1(\R^d)\,:\,J_{\omega,\R^d}'(u)u=0\right\}\\
&=\left\{u\in H^1(\R^d)\,:\,\|\nabla u\|_{L^2(\R^d)}^2+\omega\|u\|_{L^2(\R^d)}^2=\|u\|_{L^p(\R^d)}^p\right\}.
\end{split}
\]
Moreover, for every $p\in\left(2,2+\f4d\right)$, $u$ is also an energy ground state with mass $\mu$, i.e. a global minimizer of the energy
\[
E_{\R^d}(u):=\f12\|\nabla u\|_{L^2(\R^d)}^2-\f1p\|u\|_{L^p(\R^d)}^p
\]
in the mass constrained space
\[
H_\mu^1(\R^d):=\left\{u\in H^1(\R^d)\,:\, \|u\|_{L^2(\R^d)}^2=\mu\right\}\,.
\]
In this case $\omega$ plays the role of a Lagrange multiplier and  is in one--to--one correspondence with the mass $\mu$.

\begin{figure}
	\centering
	\subfloat[]{\begin{tikzpicture}[xscale= 0.5,yscale=0.5]
		\draw[step=2,thin] (0,0) grid (8,8);
		\foreach \x in {0,2,...,8} \foreach \y in {0,2,...,8} \node at (\x,\y) [nodo] {};
		\foreach \x in {0,2,...,8}
		{\draw[dashed,thin] (\x,8.2)--(\x,9.2) (\x,-0.2)--(\x,-1.2) (-1.2,\x)--(-0.2,\x)  (8.2,\x)--(9.2,\x); }
		\end{tikzpicture}}\qquad\qquad
	\subfloat[]{ 
	\begin{tikzpicture}
	[xscale= 0.5,yscale=0.5]
	\draw[step=3,thin] (0,0) grid (9,9);
	\foreach \x in {0,3,...,9} \foreach \y in {0,3,...,9} \node at (\x,\y) [nodo] {};
	
	\foreach \x in {0,3,...,9}
	{\draw[thin] (\x,9)--(\x,9.7) (\x,0)--(\x,-0.7) (-0.7,\x)--(0,\x)  (9,\x)--(9.7,\x); 
		\draw[dashed,thin] (\x,9.7)--(\x,10.2) (\x,-0.7)--(\x,-1.2) (-1.2,\x)--(-0.7,\x) (9.7,\x)--(10.2,\x);}
	
	\foreach \x in {0,3,...,9} \foreach \y in {0,3,...,9}
	{\draw[dashed,thin] (\x,\y)--(\x+2,\y+1.35);
		\draw[thin] (\x,\y)--(\x-0.5,\y-0.35);
		\draw[dashed,thin] (\x-0.5,\y-0.35)--(\x-1,\y-0.7);}
	
	\foreach \x in {1.5,4.5,...,10.5} \foreach \y in {1,4,...,10} \node at (\x,\y)
	[nodo] {};
	
	\foreach \x in {1.5,4.5,...,10.5} \foreach \y in {1,4,...,7} 
	{\draw[dashed,thin] (\x,\y)--(\x,\y+3);}
	
	\foreach \x in {1.5,4.5,...,10.5} 
	{\draw[dashed,thin] (\x,1)--(\x,0) (\x,10)--(\x,11);}
	
	\foreach \x in {1.5,4.5,...,7.5} \foreach \y in {1,4,...,10}
	{\draw[dashed,thin] (\x,\y)--(\x+3,\y);}
	
	\foreach \x in {1,4,...,10}
	{\draw[dashed,thin] (1.5,\x)--(0.5,\x) (10.5,\x)--(11.5,\x);}
	\end{tikzpicture}
}
\caption{The two--dimensional metric square grid (A) and the three--dimensional metric cubic grid (B).}
\label{fig:grid}
\end{figure}
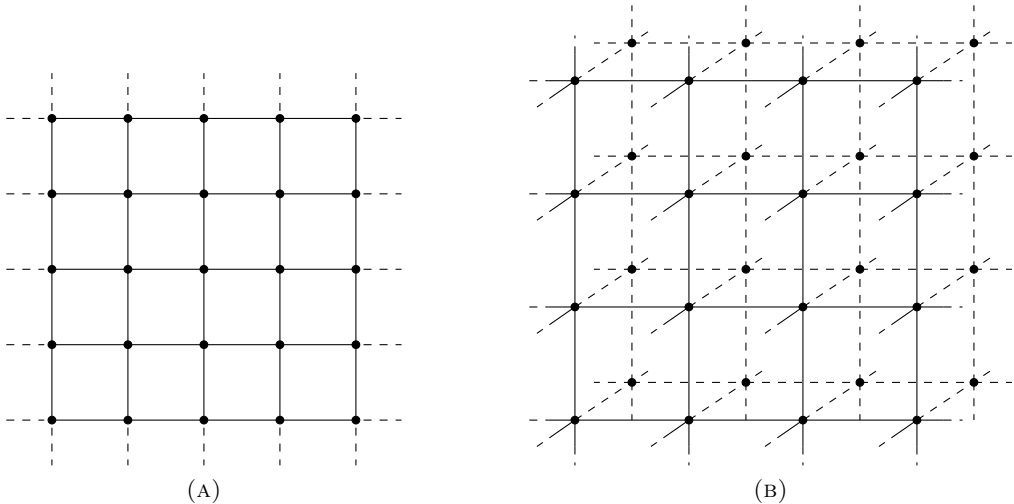
Recently, nonlinear Schr\"odinger equations have been considered extensively also on metric graphs. The literature on the subject is constantly growing, so that here we limit to redirect e.g. to \cite{ABD,BMP,BLS,BDL20,BDL21,BD22,BD21,BCJS,BCT,BCT2,CJS,DGMP,FMN,GSU21,KKLM,KMPX,KNP,NP,PS20,PSV} and references therein for some of the most recent developments, and to \cite{ABR,BK,KNP} for more comprehensive discussions. In particular, the existence of action and energy NLS ground states has been addressed on periodic grids in \cite{AD,ADST,DDGS} . When $\G$ is a $d$--dimensional periodic grid, it has been shown that the action
\[
J_{\lambda,\G}(u):=\f12\|u'\|_{L^2(\G)}^2+\f\lambda2\|u\|_{L^2(\G)}^2-\f1p\|u\|_{L^p(\G)}^p
\]
restricted to the Nehari manifold
\[
\begin{split}
\NN_{\lambda,\G}:&=\left\{u\in H^1(\G)\,:\,J_{\lambda,\G}'(u)u=0\right\}\\
&=\left\{u\in H^1(\G)\,:\,\|u'\|_{L^2(\G)}^2+\lambda\|u\|_{L^2(\G)}^2=\|u\|_{L^p(\G)}^p\right\}
\end{split}
\]
admits a ground state for every $\lambda>0$ if $p>2$, whereas the energy
\[
E_{\G}(u):=\f12\|u'\|_{L^2(\G)}^2-\f1p\|u\|_{L^p(\G)}^p
\]
constrained to the space of functions with fixed mass
\[
H_\mu^1(\G):=\left\{u\in H^1(\G)\,:\,\|u\|_{L^2(\G)}^2=\mu\right\}
\]
admits a ground state for every $\mu>0$ if $p\in\left(2,2+\f4d\right)$ (see \cite{DDGS} for action ground states, \cite{ADST,AD} for energy ground states with $d=2$ and $d=3$ respectively). Both ground states are (up to a change of sign) positive solutions of the NLS equation with homogeneous Kirchhoff conditions at the vertices
\[
\begin{cases}
u''+|u|^{p-2}u=\lambda u & \text{on every edge of }\G\\
\sum_{e\succ \v}\f{du}{dx_e}(\v)=0 & \text{for every vertex }\v\text{ of }\G.
\end{cases}
\]
Here, $e\succ \v$ means that the edge $e$ is incident at the vertex $\v$, whereas $\f{du}{dx_e}(\v)$ denotes the outward derivative of $u$ at $\v$ along $e$.

Comparing the above results for ground states on $\R^d$ and on grids unravels a strong analogy and triggers the question: do NLS ground states on grids somehow approximate those in $\R^d$ when the length of the edges is sufficiently small?

Another element of the similarity between grids and higher dimensional spaces, strictly related to the energy ground state problem described above, is the validity on $d$--dimensional grids of the Sobolev inequality
\begin{equation}
\label{eq:sobG}
\|u\|_{L^{\f d{d-1}}(\G)}\leq \mathcal{S}_{\G}\|u'\|_{L^1(\G)} \qquad\forall u\in W^{1,1}(\G)\,,
\end{equation}
which is the analogue of the well--known inequality in $\R^d$
\begin{equation}
\label{eq:sobRd}
\|u\|_{L^{\f d{d-1}}(\R^d)}\leq \mathcal{S}_{\R^d}\|\nabla u\|_{L^1(\R^d)}\qquad\forall u\in W^{1,1}(\R^d)
\end{equation}
(see \cite{ADST,AD} for a proof in the cases $d=2,3$).
Inequalities \eqref{eq:sobG}--\eqref{eq:sobRd} yield the following $d$--dimensional Gagliardo--Nirenberg inequalities
\begin{equation}
\label{eq:GNd}
\begin{split}
&\|u\|_{L^q(\G)}^q\leq K_{q,\G}\|u\|_{L^2(\G)}^{d+(2-d)\f q2}\|u'\|_{L^2(\G)}^{\left(\f q2 -1\right)d} \qquad\,\,\,\,\forall u\in H^1(\G)\\
&\|u\|_{L^q(\R^d)}^q\leq K_{q,\R^d}\|u\|_{L^2(\R^d)}^{d+(2-d)\f q2}\|\nabla u\|_{L^2(\R^d)}^{\left(\f q2 -1\right)d} \qquad\forall u\in H^1(\R^d)
\end{split}
\end{equation}
for every $q>2$ if $d=2$ and every $q\in(2,2^*)$ if $d\geq3$, that play a crucial role in giving lower boundedness of the energy functional in the mass constrained space and determining the threshold $2+\f4d$ on the nonlinearity power (for a wider discussion see e.g. \cite{DT22}). Analogous questions as for NLS ground states can then be raised for these functional inequalities on grids and $\R^d$: is there any relation between their optimal constants? As the length of the edges goes to zero, do optimizers of these inequalities on grids, if they exist, converge to those in $\R^d$?

\section{Setting and main results} 
To present our main results , we develop our discussion on $d$--dimensional cubic grids.
For every $d\geq2$ and $\varepsilon>0$, let $\G_\varepsilon^d\subset\R^d$ be the $d$--dimensional cubic grid with edgelength $\varepsilon$ in $\R^d$ (see Figure \ref{fig:grid} for $d=2,3$), i.e. the metric graph $\G_\varepsilon^d=\left(\mathbb{V}_{\G_\varepsilon^d},\mathbb{E}_{\G_\varepsilon^d}\right)$ given by
\[
\mathbb{V}_{\G_\varepsilon^d}=\varepsilon\Z^d,\qquad\mathbb{E}_{\G_\varepsilon^d}=\left\{(\v_1,\v_2)\in\mathbb{V}_{\G_\varepsilon^d}\times\mathbb{V}_{\G_\varepsilon^d}\,:\,|\v_1-\v_2|=\varepsilon\right\}.
\]
To compare functions defined on $\G_\varepsilon^d$ with those defined on the whole $\R^d$, we consider the following piecewise--affine extension procedure. For every $k=(k_1,\dots,k_d)\in\Z^d$, let 
\[
C_k:=[\varepsilon k_1,\varepsilon(k_1+1)]\times\dots\times[\varepsilon k_d,\varepsilon(k_d+1)]
\]
be the $d$--dimensional cube of edgelength $\varepsilon$ with edges on $\G_\varepsilon^d$ and $\varepsilon k$ as vertex with smallest coordinates, and write $C_k$ as
\[
C_k=\bigcup_{\sigma\in \mathbb{S}_d}S_{k,\sigma}\,,
\]
where the union runs over all the permutations $\sigma$ in the symmetric group $\mathbb{S}_d$ of the set $\left\{1,\dots,d\right\}$ and $S_{k,\sigma}$ is the $d$--simplex given by
\begin{equation}
\label{eq:simplex}
S_{k,\sigma}=\left\{(x_1,\dots,x_d)\in C_k\,:\,x_{\sigma(1)}-\varepsilon k_{\sigma(1)}\leq x_{\sigma(2)}-\varepsilon k_{\sigma(2)}\leq\dots\leq x_{\sigma(d)}-\varepsilon k_{\sigma(d)}\right\}.
\end{equation}
By construction, each $S_{k,\sigma}$ is the convex envelope of $d+1$ vertices of $C_k$ (see Figure \ref{fig:simpl} for $d=2,3$). Given $u:\G_\varepsilon^d\to\R$, we then define its piecewise--affine extension $\mathcal{A}u:\R^d\to\R$ as
\begin{equation}
\label{eq:Ad}
\mathcal{A}u(x):=\mathcal{A}_{k,\sigma}u(x)\quad\text{ if }x\in S_{k,\sigma},\text{ for some }k\in\Z^d\text{ and }\sigma\in\mathbb{S}_d\,,
\end{equation}
where $\mathcal{A}_{k,\sigma}u:S_{k,\sigma}\to\R$ is the affine interpolation of the values of $u$ at the vertices of $S_{k,\sigma}$. Note that $\mathcal{A}u$ is well--defined on the whole $\R^d$, because any non--empty intersection of two simplexes $S_{k,\sigma}$, $S_{k',\sigma'}$ is itself a simplex (of dimension smaller than $d$) contained in the boundary of $S_{k,\sigma}$ and $S_{k',\sigma'}$, so that $\mathcal{A}_{k,\sigma}u\equiv \mathcal{A}_{k',\sigma'}u$ on $S_{k,\sigma}\cap S_{k',\sigma'}$.

\begin{figure}[t]
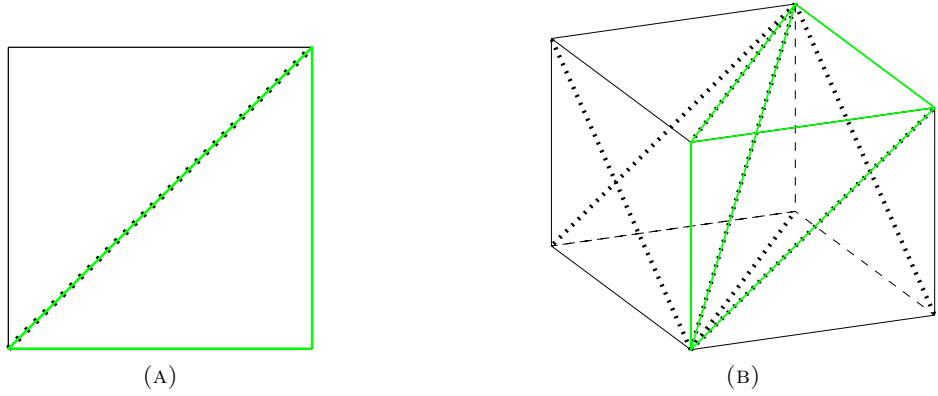

	\centering
	\subfloat[][ ]{\includegraphics[width=0.24\columnwidth]{quadrato}}
	\qquad\qquad\qquad\qquad
	\subfloat[][ ]{\includegraphics[width=0.3\columnwidth]{cubo}}
	\caption{The simplexes defined in \eqref{eq:simplex} for a two--dimensional square (A) and a three--dimensional cube (B). The edges of the boundary of the simplexes that do not coincide with edges of the square or the cube are denoted by bold dotted lines. The boundary of one of the simplexes is highlighted in green.}
	\label{fig:simpl}
\end{figure}

We can now state our main results, starting with the NLS ground state problems. Let us begin with action ground states. For every $p\in (2,2^*)$ and every $\lambda>0$, we introduce the action functional $\widetilde{J}_{\lambda,\G_\varepsilon^d}:H^1(\G_\varepsilon^d)\to\R$
\begin{equation}
	\label{eq:Jtilde}
	\widetilde{J}_{\lambda,\G_\varepsilon^d}(u):=\f12\|u'\|_{L^2(\G_\varepsilon^d)}^2+\f\lambda{2d}\|u\|_{L^2(\G_\varepsilon^d)}^2-\f1{dp}\|u\|_{L^p(\G_\varepsilon^d)}^p
\end{equation}
and the associated Nehari manifold
\[
	\begin{split}
	\widetilde{\NN}_{\lambda,\G_\varepsilon^d}:&=\left\{u\in H^1(\G_\varepsilon^d)\,:\,\widetilde{J}_{\lambda,\G_\varepsilon^d}'(u)u=0\right\}\\
	&=\left\{u\in H^1(\G_\varepsilon^d)\,:\,d\|u'\|_{L^2(\G_\varepsilon^d)}^2+\lambda\|u\|_{L^2(\G_\varepsilon^d)}^2=\|u\|_{L^p(\G_\varepsilon^d)}^p\right\}.
	\end{split}
\]
Letting
\[
	\widetilde{\JJ}_{\G_\varepsilon^d}(\lambda):=\inf_{v\in\widetilde{\NN}_{\lambda,\G_\varepsilon^d}}\wJ_{\lambda,\G_\varepsilon^d}(v)
\]
be the corresponding minimum problem, $u\in\widetilde{\NN}_{\lambda,\G_\varepsilon^d}$ is called a ground state of $\wJ_{\lambda,\G_\varepsilon^d}$ if $\wJ_{\lambda,\G_\varepsilon^d}(u)=\widetilde{\JJ}_{\G_\varepsilon^d}(\lambda)$. Since the existence results of \cite{DDGS} do not depend on the coefficients in the functional (provided the $L^p$ term is negative and the other ones positive), it follows that there always exist ground states of $\wJ_{\lambda,\G_\varepsilon^d}$ in $\widetilde{\NN}_{\lambda,\G_\varepsilon^d}$ for every $\lambda>0$, $\varepsilon>0$ and $p\in(2,2^*)$. Ground states are constant sign solutions of the NLS equation
\[
\begin{cases}
u''+\f1d|u|^{p-2}u=\f\lambda d u & \text{on every edge of }\G_\varepsilon^d\\
\sum_{e\succ \v}\f{du}{dx_e}(\v)=0 & \text{for every vertex }\v\text{ of }\G_\varepsilon^d\,.
\end{cases}
\]
The next theorem answers in the affirmative to the question whether action ground states on $d$--dimensional cubic grids with vanishing edgelength approximate ground states in the whole $\R^d$. In what follows, for every $\omega>0$ we make use of the shorthand notation
\[
	\JJ_{\R^d}(\omega):=\inf_{v\in\NN_{\omega,\R^d}}J_{\omega,\R^d}(v)\,.
\]
\begin{theorem}
	\label{thm:action}
	Let $d\geq 2$, $p\in(2,2^*)$ and $\omega>0$ be fixed.
	\begin{itemize}
		\item[(i)] If $d=2$ and $p>2$, or $d\geq3$ and $p\in\left(2,\f{2^*}2+1\right]$, then there exists $C_{d,p}>0$ such that
		\[
			\left|\varepsilon^{d-1}\widetilde{\JJ}_{\G_\varepsilon^d}(\omega)-\JJ_{\R^d}(\omega)\right|\leq C_{d,p}\varepsilon \qquad\text{as }\varepsilon\to0\,,
		\]
		whereas if $d\geq 3$ and $p\in\left(\f{2^*}2+1,2^*\right)$, then for every $\gamma>0$ there exists $C_{d,p,\gamma}>0$ such that
		\[
			\left|\varepsilon^{d-1}\widetilde{\JJ}_{\G_\varepsilon^d}(\omega)-\JJ_{\R^d}(\omega)\right|\leq C_{d,p,\gamma}\varepsilon^{\f{d-2}2(2^*-p)-\gamma}\qquad\text{as }\varepsilon\to0\,;
		\]
		
		\item[$(ii)$] for every positive ground state $u_\varepsilon$ of $\widetilde{J}_{\omega,\G_\varepsilon^d}$ in $\widetilde{\NN}_{\omega,\G_\varepsilon^d}$ there exists $x_\varepsilon\in\R^d$ such that
		\begin{equation*}
		\mathcal{A} u_\varepsilon(\cdot-x_\varepsilon) \xrightarrow[]{\varepsilon\to0} \varphi_\omega\quad\text{ in }H^1(\R^d)\,,
		\end{equation*}
		where $\varphi_\omega\in\NN_{\omega,\R^d}$ is the unique positive ground state of $J_{\omega,\R^d}$ attaining its $L^\infty$ norm at the origin. 
	\end{itemize}
\end{theorem}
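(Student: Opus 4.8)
The plan is to reduce both ground--state levels to scale--invariant quotients, to compare them through the extension operator $\mathcal{A}$, and to deduce the convergence in $(ii)$ from a compactness argument for the extended minimizers. First I would record the Nehari identity: on $\widetilde{\NN}_{\omega,\G_\eps^d}$ one has $\wJ_{\omega,\G_\eps^d}(u)=\f1d\left(\f12-\f1p\right)\|u\|_{L^p(\G_\eps^d)}^p$, and, rescaling any $u\neq0$ onto the manifold,
\[
\widetilde{\JJ}_{\G_\eps^d}(\omega)=\f1d\left(\f12-\f1p\right)\inf_{u\neq0}\f{\left(d\|u'\|_{L^2(\G_\eps^d)}^2+\omega\|u\|_{L^2(\G_\eps^d)}^2\right)^{\f{p}{p-2}}}{\|u\|_{L^p(\G_\eps^d)}^{\f{2p}{p-2}}},
\]
with the analogous identity $\JJ_{\R^d}(\omega)=\left(\f12-\f1p\right)\inf_{v\neq0}Q_{\R^d}(v)$, where $Q_{\R^d}(v):=\big(\|\nabla v\|_{L^2(\R^d)}^2+\omega\|v\|_{L^2(\R^d)}^2\big)^{\f p{p-2}}\big/\|v\|_{L^p(\R^d)}^{\f{2p}{p-2}}$. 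This makes the coefficients $\f1{2d},\f1{dp}$ in $\wJ$ transparent: they are precisely the constants for which $\eps^{d-1}$ is the correct dimensional normalisation.

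The technical core is a family of estimates relating the grid norms of $u$ to the $\R^d$ norms of $\mathcal{A}u$. On each simplex $S_{k,\sigma}$ the interpolant $\mathcal{A}u$ is affine, its partial derivatives are the divided differences of $u$ across the $d$ grid edges joining the ordered vertices of $S_{k,\sigma}$, and $|u(\v_2)-u(\v_1)|^2\le\eps\int_e|u'|^2$ by Cauchy--Schwarz; summing over simplices and counting edge multiplicities yields the clean bound $\|\nabla\mathcal{A}u\|_{L^2(\R^d)}^2\le\eps^{d-1}\|u'\|_{L^2(\G_\eps^d)}^2$ for every $u\in H^1(\G_\eps^d)$. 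For the zeroth--order terms I would use a one--dimensional trace inequality on each edge of length $\eps$ to obtain $\|\mathcal{A}u\|_{L^2(\R^d)}^2\le C_d\big(\eps^{d-1}\|u\|_{L^2(\G_\eps^d)}^2+\eps^{d+1}\|u'\|_{L^2(\G_\eps^d)}^2\big)$ and its $L^p$ analogue, so that $\mathcal{A}u$ is bounded in $H^1(\R^d)$ whenever the rescaled grid energy is bounded.

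For $(i)$ I would argue by two inequalities. For the upper bound I sample $\varphi_\omega$ along the edges of $\G_\eps^d$ and rescale onto $\widetilde{\NN}_{\omega,\G_\eps^d}$; since $\varphi_\omega$ is smooth with exponential decay, each grid norm is a $(d-1)$--dimensional Riemann sum of a rapidly decaying integrand plus a per--edge Cauchy--Schwarz defect, both $O(\eps)$, and the Nehari rescaling factor is $1+O(\eps)$, giving $\eps^{d-1}\widetilde{\JJ}_{\G_\eps^d}(\omega)\le\JJ_{\R^d}(\omega)+O(\eps)$. For the lower bound I feed the grid Gagliardo--Nirenberg inequality \eqref{eq:GNd} into the quotient above and compare $K_{p,\G_\eps^d}$ with $K_{p,\R^d}$, producing $\eps^{d-1}\widetilde{\JJ}_{\G_\eps^d}(\omega)\ge\JJ_{\R^d}(\omega)-o(1)$; a quantitative constant comparison gives the stated rates. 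The degradation for $d\geq3$, $p>\f{2^*}2+1$, originates exactly in controlling the $L^p$ discrepancy between a grid function and its extension: near the critical exponent this forces interpolation between the sharp $L^2$ control and the borderline $L^{2^*}$ control, which is what yields the exponent $\f{d-2}2(2^*-p)$ and the arbitrarily small loss $\gamma$.

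For $(ii)$ set $v_\eps:=\mathcal{A}u_\eps$. By the upper bound and the Nehari identity the rescaled energies $\eps^{d-1}\wJ_{\omega,\G_\eps^d}(u_\eps)$ are bounded, hence so are $\eps^{d-1}\|u_\eps'\|_{2}^2$, $\eps^{d-1}\|u_\eps\|_{2}^2$, $\eps^{d-1}\|u_\eps\|_{p}^p$, and the extension estimates make $\{v_\eps\}$ bounded in $H^1(\R^d)$. Invoking a priori regularity of ground states (a uniform $L^\infty$ bound together with the edge equation $u''=\f1d(\omega u-u^{p-1})$, which forces $u_\eps$ to oscillate by only $O(\eps)$ along each edge), I would upgrade the zeroth--order bounds to the sharp asymptotics $\|v_\eps\|_{L^2(\R^d)}^2=(1+o(1))\f1d\eps^{d-1}\|u_\eps\|_{2}^2$ and $\|v_\eps\|_{L^p(\R^d)}^p=(1+o(1))\f1d\eps^{d-1}\|u_\eps\|_{p}^p$; combined with the gradient inequality these give $\left(\f12-\f1p\right)Q_{\R^d}(v_\eps)\le\eps^{d-1}\wJ_{\omega,\G_\eps^d}(u_\eps)+o(1)$, so with $(i)$ the sequence $v_\eps$ minimises $Q_{\R^d}$. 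Concentration--compactness for the translation--invariant $\R^d$ problem (positivity of $u_\eps$ and non--vanishing of $\|v_\eps\|_p$ excluding vanishing and dichotomy) then yields $x_\eps$ with $v_\eps(\cdot-x_\eps)\rightharpoonup v\neq0$ a minimiser, hence $v=\varphi_\omega$ by uniqueness, and convergence of the $H^1$--norm upgrades this to strong convergence. I expect the decisive obstacle to be precisely the edge non--concentration estimate underlying these sharp asymptotics: proving that the minimizers do not concentrate in the interior of the shrinking edges, so that $\mathcal{A}u_\eps$ faithfully captures the $L^2$ and $L^p$ mass of $u_\eps$ with the correct constant $\f1d$. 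This is where the ODE structure and the uniform a priori bounds are indispensable, and where the critical--exponent loss in $(i)$ is rooted.
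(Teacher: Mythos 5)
Your upper bound (restriction of $\varphi_\omega$ to $\G_\varepsilon^d$ followed by projection onto the grid Nehari manifold) and your gradient estimate $\|\nabla\mathcal{A}u\|_{L^2(\R^d)}^2\leq\varepsilon^{d-1}\|u'\|_{L^2(\G_\varepsilon^d)}^2$ match the paper (Lemma \ref{lem:upest}, Lemma \ref{lem:normA}), and the compactness scheme in $(ii)$ is the same classical one the paper uses. The genuine gap is in your lower bound for $(i)$. You propose to deduce $\varepsilon^{d-1}\widetilde{\JJ}_{\G_\varepsilon^d}(\omega)\geq\JJ_{\R^d}(\omega)-o(1)$ by inserting the grid Gagliardo--Nirenberg inequality into the Nehari quotient and ``comparing $K_{p,\G_\varepsilon^d}$ with $K_{p,\R^d}$''. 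This requires the \emph{upper} bound $K_{p,\G_1^d}\leq d^{\f{(d-2)(p-2)}4}K_{p,\R^d}$ (an upper bound on the action level needs an upper bound on the best GN constant). But for $p\in\left(2,2+\f4d\right)$ this inequality is precisely what the paper declares open: only the reverse inequality \eqref{eq:KGgeqKR2} is known there, and Proposition \ref{prop:ex24} is merely conditional. So your argument cannot close in that range. For $p\in\left[2+\f4d,2^*\right)$ the needed equality is Theorem \ref{thm:GN}, but its proof in the paper is built on the very extension machinery (Lemmas \ref{lem:uutL2}--\ref{lem:uutLq}--\ref{lem:normA}, Proposition \ref{prop:GNint}) that the direct proof of Theorem \ref{thm:action} uses, and it comes with no quantitative rate, so the rates $\varepsilon$ and $\varepsilon^{\f{d-2}2(2^*-p)-\gamma}$ would not follow. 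The paper's actual lower bound avoids GN constants entirely: it extends the ground state itself, $v_\varepsilon:=\pi_\omega(\mathcal{A}u_\varepsilon)\mathcal{A}u_\varepsilon\in\NN_{\omega,\R^d}$, and shows $J_{\omega,\R^d}(v_\varepsilon)\leq\varepsilon^{d-1}\widetilde{\JJ}_{\G_\varepsilon^d}(\omega)+o(1)$ with explicit rates via Lemma \ref{lem:u->Au}.

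The second gap sits exactly where you locate the ``decisive obstacle''. The sharp asymptotics $\|\mathcal{A}u_\varepsilon\|_{L^2(\R^d)}^2=(1+o(1))\f{\varepsilon^{d-1}}d\|u_\varepsilon\|_{L^2(\G_\varepsilon^d)}^2$ and its $L^p$ analogue are what both the lower bound and $(ii)$ really rest on, and you propose to obtain them from a uniform $L^\infty$ bound plus the edge ODE forcing $O(\varepsilon)$ oscillation. No such uniform $L^\infty$ bound is available a priori: the one--dimensional estimate $\|u_\varepsilon\|_\infty^2\lesssim\|u_\varepsilon\|_{L^2}\|u_\varepsilon'\|_{L^2}$ together with Theorem \ref{thm:stimeJe} only gives $\|u_\varepsilon\|_\infty\lesssim\varepsilon^{-(d-1)/2}$, and improving this to $O(1)$ amounts to excluding concentration, i.e.\ to the theorem itself. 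The paper instead derives these norm comparisons for \emph{arbitrary} functions satisfying the a priori bounds, using only Jensen/Cauchy--Schwarz estimates on edges (Lemmas \ref{lem:uutL2}--\ref{lem:uutLq}) and, crucially for $p>\f{2^*}2+1$, the new interpolation inequalities of Proposition \ref{prop:GNint}, whose mechanism is the coexistence of two--dimensional sub--grids and the $d$--dimensional macroscale inside $\G_\varepsilon^d$ --- a structural ingredient your sketch gestures at (``interpolation between $L^2$ and $L^{2^*}$ control'') but does not actually supply.
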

An analogous result holds true for fixed mass ground states of the energy. For every $p\in\left(2,2+\f4d\right)$, we introduce the energy functional $\widetilde{E}_{\G_\varepsilon^d}:H^1(\G_\varepsilon^d)\to\R$
\begin{equation}
\label{eq:Etilde}
\widetilde{E}_{\G_\varepsilon^d}(u):=\f12\|u'\|_{L^2(\G_\varepsilon^d)}^2-\f1{dp}\|u\|_{L^p(\G_\varepsilon^d)}^p
\end{equation}	
and denote by
\[
\widetilde{\EE}_{\G_\varepsilon^d}(\mu):=\inf_{v\in H_\mu^1(\G_\varepsilon^d)}\widetilde{E}_{\G_\varepsilon^d}(v)
\]
the corresponding ground state problem at mass $\mu>0$.
As usual, $u\in H_\mu^1(\G_\varepsilon^d)$ is called a ground state of $\widetilde{E}_{\G_\varepsilon^d}$ at mass $\mu$ if $\widetilde{E}_{\G_\varepsilon^d}(u)=\widetilde{\EE}_{\G_\varepsilon^d}\left(\mu\right)$. If $u$ is a ground state of $\widetilde{E}_{\G_\varepsilon^d}$, then
\begin{equation*}
\label{eq:nlsetilde}
\begin{cases}
u''+\f1d|u|^{p-2}u=\mathcal{L}_{\G_\varepsilon^d}(u) u & \text{on every edge of }\G_\varepsilon^d\\
\sum_{e\succ \v}\f{du}{dx_e}(\v)=0 & \text{for every vertex }\v\text{ of }\G_\varepsilon^d\,,
\end{cases}
\end{equation*}
where
\begin{equation}
\label{eq:Lu}
\mathcal{L}_{\G_\varepsilon^d}(u):=\f{\f1d\|u\|_{L^p(\G_\varepsilon^d)}^p-\|u'\|_{L^2(\G_\varepsilon^d)}^2}{\|u\|_{L^2(\G_\varepsilon^d)}^2}\,.
\end{equation}
Adapting the analysis of \cite{ADST} ensures that there always exist ground states of $\widetilde{E}_{\G_\varepsilon^d}$ at mass $\mu$ for every $\varepsilon>0$, $\mu>0$ and $p\in\left(2,2+\f4d\right)$. For such ground states we have the following convergence result, where we also use the notation
\[
\EE_{\R^d}(\mu):=\inf_{u\in H_\mu^1(\R^d)}E_{\R^d}(v)\,.
\]
\begin{theorem}
	\label{thm:2dsquare}
	Let  $p\in\left(2,2+\f4d\right)$ and $\mu>0$ be fixed. 
	\begin{itemize}
		\item[$(i)$] If $d\in\left\{2,3,4\right\}$ and $p\in\left(2,2+\f4d\right)$, or $d\geq 5$ and $p\in\left(2,\f{2^*}2+1\right)$, then there exists $C_{d,p}>0$ such that
		\[
		\left|\varepsilon^{d-1}\widetilde{\EE}_{\G_\varepsilon^d}\left(\f d{\varepsilon^{d-1}}\mu\right)-\EE_{\R^d}(\mu)\right|\leq C_{d,p}\varepsilon\qquad\text{as }\varepsilon\to0, 
		\]
		whereas if $d\geq 5$ and $p\in\left(\f{2^*}2+1,2+\f4d\right)$, then for every $\gamma>0$ there exists $C_{d,p,\gamma}>0$ such that
		\[
			\left|\varepsilon^{d-1}\widetilde{\EE}_{\G_\varepsilon^d}\left(\f d{\varepsilon^{d-1}}\mu\right)-\EE_{\R^d}(\mu)\right|\leq C_{d,p,\gamma}\varepsilon^{\f{d-2}2(2^*-p)-\gamma}\qquad\text{as }\varepsilon\to0\,;
		\]
		\item[$(ii)$] for every positive ground state $u_\varepsilon$ of $\widetilde{E}_{\G_\varepsilon^d}$ in $H_{\f{d}{\varepsilon^{d-1}}\mu}^1(\G_\varepsilon^d)$ there exists $x_\varepsilon\in\R^d$ such that
		\begin{equation*}
		\mathcal{A} u_\varepsilon(\cdot-x_\varepsilon) \xrightarrow[]{\varepsilon\to0} \phi_\mu\quad\text{ in }H^1(\R^d)\,,
		\end{equation*}
		where $\phi_\mu\in H_\mu^1(\R^d)$ is the unique positive ground state of $E_{\R^d}$ at mass $\mu$ attaining its $L^\infty$ norm at the origin.
		Furthermore, 
			\begin{equation}
				\label{eq:convLu}
				\lim_{\varepsilon\to0}\mathcal{L}_{\G_\varepsilon^d}(u_\varepsilon)=\f{\omega_\mu}d\,,
			\end{equation} 
		where $\omega_\mu$ is the value of the parameter $\omega$ for which $\phi_\mu$ solves \eqref{eq:NLSR2}.
	\end{itemize}
\end{theorem}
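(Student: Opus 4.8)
The overall strategy mirrors the proof of Theorem~\ref{thm:action}: I would pass from the grid to $\R^d$ through the interpolation operator $\mathcal{A}$ and the sampling map $u\mapsto u|_{\G_\eps^d}$, and then apply the variational theory of the mass-constrained energy $E_{\R^d}$ on $\Hmu(\R^d)$. The computational backbone is a scaling dictionary relating the grid functionals to the $\R^d$ ones. For a fixed smooth, exponentially decaying profile $\phi$ on $\R^d$, the edge integrals of $\phi|_{\G_\eps^d}$ summed over the $d$ families of parallel grid lines are Riemann sums of the corresponding $\R^d$ integrals once multiplied by $\eps^{d-1}$; since the $L^2$ and $L^p$ densities are sampled by all $d$ line families whereas each partial derivative is sampled only by the family parallel to it, one obtains $\eps^{d-1}\|\phi|_{\G_\eps^d}\|_{L^2(\G_\eps^d)}^2\to d\|\phi\|_{L^2(\R^d)}^2$, $\eps^{d-1}\|\phi|_{\G_\eps^d}\|_{L^p(\G_\eps^d)}^p\to d\|\phi\|_{L^p(\R^d)}^p$ and $\eps^{d-1}\|(\phi|_{\G_\eps^d})'\|_{L^2(\G_\eps^d)}^2\to\|\nabla\phi\|_{L^2(\R^d)}^2$. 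Dually, for a grid function $u$ the interpolant satisfies $\|\mathcal{A}u\|_{L^2(\R^d)}^2=\tfrac{\eps^{d-1}}d\|u\|_{L^2(\G_\eps^d)}^2+(\text{error})$, the analogous identity for the $L^p$ norm, and $\|\nabla\mathcal{A}u\|_{L^2(\R^d)}^2\le\eps^{d-1}\|u'\|_{L^2(\G_\eps^d)}^2$ because the affine interpolant minimizes the Dirichlet energy among functions with prescribed vertex data. These relations are precisely what motivate the factor $\tfrac1d$ in $\widetilde E_{\G_\eps^d}$ and the mass normalization $\tfrac d{\eps^{d-1}}\mu$: under $\mathcal{A}$ the quantity $\eps^{d-1}\widetilde E_{\G_\eps^d}$ becomes $E_{\R^d}$ and the constraint mass $\tfrac d{\eps^{d-1}}\mu$ becomes $\mu$.

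For the two-sided estimate $(i)$, I would obtain the upper bound by testing $\widetilde{\EE}_{\G_\eps^d}$ with $\phi_\mu|_{\G_\eps^d}$, renormalized to the exact mass $\tfrac d{\eps^{d-1}}\mu$; the dictionary then gives $\eps^{d-1}\widetilde E_{\G_\eps^d}(\phi_\mu|_{\G_\eps^d})=E_{\R^d}(\phi_\mu)+O(\eps)=\EE_{\R^d}(\mu)+O(\eps)$, the quadrature error being $O(\eps)$ since $\phi_\mu$ is smooth and decays exponentially. For the lower bound I would take a grid ground state $u_\eps$, pass to $\mathcal{A}u_\eps$, and write $E_{\R^d}(\mathcal{A}u_\eps)\le\eps^{d-1}\widetilde E_{\G_\eps^d}(u_\eps)+(\text{error})$; since $\mathcal{A}u_\eps$ has mass $\mu_\eps=\|\mathcal{A}u_\eps\|_{L^2(\R^d)}^2\to\mu$, monotonicity and local Lipschitz continuity of $\mu\mapsto\EE_{\R^d}(\mu)$ (available in the $L^2$-subcritical range $p<2+\tfrac4d$) yield $\EE_{\R^d}(\mu)\le E_{\R^d}(\mathcal{A}u_\eps)+o(1)$, closing the estimate. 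The degraded rate $\eps^{\frac{d-2}2(2^*-p)-\gamma}$ for $p>\frac{2^*}2+1$ originates in these error terms: comparing the nonlinear term on $u_\eps$ with the one on its interpolant requires higher-order integrability, which for such $p$ is only controlled with a loss through the Gagliardo--Nirenberg and Sobolev inequalities \eqref{eq:GNd}--\eqref{eq:sobG}. This step is entirely parallel to the action case, and at the threshold $p=\frac{2^*}2+1$ the exponent equals $1$, matching the first regime.

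For the convergence $(ii)$, the estimates of $(i)$ show that, after renormalization to mass $\mu$, the extensions $\mathcal{A}u_\eps$ form a minimizing sequence for $E_{\R^d}$ on $\Hmu(\R^d)$. In the $L^2$-subcritical regime $\EE_{\R^d}(\mu)<0$ and $\mu\mapsto\EE_{\R^d}(\mu)$ is strictly subadditive, so the concentration--compactness principle rules out both vanishing (excluded by negativity of the energy) and dichotomy (excluded by strict subadditivity): there exist translations $x_\eps$ with $\mathcal{A}u_\eps(\cdot-x_\eps)\rightharpoonup\phi$ weakly in $H^1(\R^d)$, and convergence of the energies upgrades this to strong convergence in $H^1(\R^d)$. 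Uniqueness of the positive energy ground state up to translation forces $\phi=\phi_\mu$ once $x_\eps$ is fixed so that the maximum sits at the origin. Finally, for \eqref{eq:convLu} I would insert the dictionary into \eqref{eq:Lu}: multiplying numerator and denominator by $\eps^{d-1}$ and using the strong $H^1$ convergence gives numerator $\to\|\phi_\mu\|_{L^p(\R^d)}^p-\|\nabla\phi_\mu\|_{L^2(\R^d)}^2$ and denominator $=\eps^{d-1}\tfrac d{\eps^{d-1}}\mu=d\mu$; testing \eqref{eq:NLSR2} for $\phi_\mu$ against $\phi_\mu$ shows the numerator equals $\omega_\mu\mu$, whence $\mathcal{L}_{\G_\eps^d}(u_\eps)\to\omega_\mu/d$.

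The main obstacle I expect is the quantitative comparison underlying the dictionary, namely controlling the discrepancy between a grid ground state $u_\eps$ and its piecewise-affine interpolant $\mathcal{A}u_\eps$ uniformly in $\eps$: unlike a fixed smooth profile, $u_\eps$ solves the NLS equation on each edge and is known only through a priori bounds, so the $L^p$ and Dirichlet comparisons must be fed by uniform estimates coming from the equation together with \eqref{eq:GNd}--\eqref{eq:sobG}, and it is exactly here that the admissible range of $p$ and the sharp convergence rates are decided. A secondary difficulty, milder because of $L^2$-subcriticality, is ensuring the compactness of $\mathcal{A}u_\eps$ up to translation and controlling the mass defect $\mu_\eps-\mu$ so that it does not spoil the limit.
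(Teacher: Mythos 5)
Your proposal follows the same architecture as the paper's proof: the upper bound for (i) by restricting $\phi_\mu$ to the grid and renormalizing the mass (this is precisely the paper's Lemma \ref{lem:upest}); the lower bound by extending the grid ground states through $\mathcal{A}$; and, for (ii), a concentration--compactness argument in which vanishing is excluded by the negativity of $\EE_{\R^d}(\mu)$ and dichotomy by strict subadditivity (the paper implements both by hand, via the cube normalization \eqref{eq:cubi}, the Brezis--Lieb lemma and the scaling law \eqref{eq:ERdm}). Your variant for the lower bound --- invoking local Lipschitz continuity of $\mu\mapsto\EE_{\R^d}(\mu)$ together with an $O(\varepsilon)$ control of the mass defect, rather than rescaling $\mathcal{A}u_\varepsilon$ to exact mass $\mu$ --- is a harmless cosmetic difference, since \eqref{eq:ERdm} makes the level an explicit power of $\mu$.

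There are, however, two gaps, one substantial. First: in the regime $d\geq5$, $p\in\left(\f{2^*}2+1,2+\f4d\right)$, the comparison between $\|u_\varepsilon\|_{L^p(\G_\varepsilon^d)}^p$ and $\|\mathcal{A}u_\varepsilon\|_{L^p(\R^d)}^p$ requires estimating $L^{2(p-1)}$ norms on the grid, and there $2(p-1)>2^*$; the $d$--dimensional Gagliardo--Nirenberg inequality \eqref{eq:GNd}, which holds only for exponents up to $2^*$, and the Sobolev inequality \eqref{eq:sobG} that you cite simply do not apply. This is exactly why the paper proves the new interpolating inequalities of Proposition \ref{prop:GNint}, mixing the two--dimensional grid inequality \eqref{eq:gn1d} with the $d$--dimensional one at exponent $2^*$; these feed Lemma \ref{lem:uutLq}(ii), Lemma \ref{lem:normA} and Lemma \ref{lem:u->Au}(iii), and produce the degraded rate $\varepsilon^{\f{d-2}2(2^*-p)-\gamma}$. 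You correctly identify this comparison as the crux, but the toolkit you propose for it cannot close the estimate in that regime. Second, a smaller point: your derivation of \eqref{eq:convLu} takes $\varepsilon^{d-1}\|u_\varepsilon'\|_{L^2(\G_\varepsilon^d)}^2\to\|\nabla\phi_\mu\|_{L^2(\R^d)}^2$ from the strong $H^1(\R^d)$ convergence of $\mathcal{A}u_\varepsilon$; but \eqref{eq:gradAL2} is one--sided, so strong convergence of the extensions only yields $\liminf_{\varepsilon\to0}\varepsilon^{d-1}\|u_\varepsilon'\|_{L^2(\G_\varepsilon^d)}^2\geq\|\nabla\phi_\mu\|_{L^2(\R^d)}^2$. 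The matching upper bound must be recovered from part (i) combined with the strong $L^p$ convergence, or one can bypass the gradient term altogether as the paper does through the identity \eqref{eq:LconE}.
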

Theorems \ref{thm:action}--\ref{thm:2dsquare} show that one has to consider slightly modified action and energy functionals on grids to recover ground states in $\R^d$ solving \eqref{eq:NLSR2}. This is no surprise. The scale factor $\varepsilon^{d-1}$ multiplying the ground state levels on grids is due to the different local dimensions of grids and $\R^d$, as one can see e.g. by comparing, for small $\varepsilon$, the volume of a ball of radius $\varepsilon$ in $\R^d$ (which is proportional to $\varepsilon^d$) with that of its restriction to $\G_\varepsilon^d$ (which goes like $\varepsilon$). Conversely, the coefficients $1/d$ and $d$ appearing in front of the norms in both problems are determined by the specific shape of the periodicity cell of $\G_\varepsilon^d$ (see also Section \ref{sec:gen} below).
Note that Theorems \ref{thm:action}(ii)--\ref{thm:2dsquare}(ii) do not require to pass to subsequences of ground states as $\varepsilon\to0$ by the uniqueness, up to symmetries, of the limit solution.

The proof of Theorems \ref{thm:action}--\ref{thm:2dsquare} combines purely variational arguments, based only on the minimality of ground states, with a deep analysis of the interaction between scales of different dimensions in the grid. This latter element is crucial to pass to the limit on the nonlinear term in the whole range $(2,2^*)$ for the nonlinearity power. Roughly, when $p\leq\f{2^*}2+1$, to prove the above convergence results it is enough to rely on the $d$--dimensional Gagliardo--Nirenberg inequalities \eqref{eq:GNd} on $\G_\varepsilon^d$. On the contrary, for larger values of $p$ we need to derive new Gagliardo--Nirenberg estimates on $\G_\varepsilon^d$ (see Proposition \ref{prop:GNint} below) interpolating between purely $d$--dimensional inequalities and purely two--dimensional ones. Such inequalities, that may perhaps be of some independent interest, suggest that $\G_\varepsilon^d$ shares a rich structure with features peculiar of any dimension between $1$ and $d$.  As so, these estimates have no analogue in $\R^d$.

The use of different estimates to deal with the cases $p\in\left(2,\f{2^*}2+1\right]$ and $p\in\left(\f{2^*}2+1,2^*\right)$ is also the reason for the rates of convergence on the ground state levels reported in Theorems \ref{thm:action}(i)--\ref{thm:2dsquare}(i). In the case of the energy, this difference becomes relevant only in dimension greater than or equal to $5$, since $2+\f4d\leq\f{2^*}2+1$ whenever $d=2,3,4$. We do not know whether the rates we obtain here are sharp. In particular, our method does not even allow us to understand whether the rate $o\left(\varepsilon^{\f{d-2}2(2^*-p)-\gamma}\right)$, for every $\gamma>0$, can be improved at least to $O\left(\varepsilon^{\f{d-2}2(2^*-p)}\right)$.
\begin{remark}
	\label{rem:gamma}
	Since Theorems \ref{thm:action}--\ref{thm:2dsquare} prove convergence of minimizers, one may wonder whether it is possible to recover the same result in the framework of $\Gamma$--convergence, a rather natural question also in view of the large  literature available on discrete--to--continuum problems (see e.g. \cite[Chapter 11]{braides} and references therein for a comprehensive overview on the subject). For instance, in the case of fixed mass ground states of the energy (the discussion in the action setting is analogous), given $\mu>0$, we could consider the functionals $F_\varepsilon:H^1(\G_\varepsilon^d)\to\R$
	\[
	F_\varepsilon(u):=\begin{cases}
		\varepsilon^{d-1}\widetilde{E}_{\G_\varepsilon^d}(u) & \text{if }u\in H_{\f d{\varepsilon^{d-1}}\mu}^1(\G_\varepsilon^d)\\
		+\infty & \text{otherwise}
	\end{cases}
	\]
	and try to understand whether $\Gamma-\lim_{\varepsilon\to0}F_\varepsilon=F$, with $F:H^1(\R^d)\to\R$ given by
	\[
	F(u):=\begin{cases}
		E_{\R^d}(u) & \text{if }u\in H_\mu^1(\R^d)\\
		+\infty & \text{otherwise}
	\end{cases}
	\]
	and the convergence of functions on $\G_\varepsilon^d$ to those on $\R^d$ as in Theorem \ref{thm:2dsquare}, i.e. strong convergence in $H^1(\R^d)$ of piecewise--affine extensions as in \eqref{eq:Ad}. Recall that, to obtain convergence of minimizers of $F_\varepsilon$ to those of $F$, such a $\Gamma$--convergence result would not be enough and should be coupled with the equicoercivity of the sublevel sets of $F_\varepsilon$ (see e.g. \cite[Theorem 2.9]{braides}). However, it is easy to see that in our setting here such equicoercivity does not hold (see Remark \ref{rem:noequi} below). 
\end{remark} 

As already pointed out, one can look at Theorems \ref{thm:action}--\ref{thm:2dsquare} in two ways: using ground states on grids as one--dimensional approximations of those on $\R^d$, or using the model in $\R^d$ to effectively describe that on grids. 

From the point of view of NLS equations in $\R^d$, the above theorems rigorously justify the approximation of $d$--dimensional ground states with their analogue on grids and open the way to numerical implementations in this spirit (see  \cite{BDL20,BDL21} for recent results on numerical schemes for ground states on graphs). Note that, since what we did here works for the action when $p\in(2,2^*)$ and for the energy when $p\in\left(2,2+\f4d\right)$, up to now we can only handle regimes of nonlinearities where the ground state problems on $\R^d$ are well--known. It would be interesting to investigate whether one can perform an analogous limit procedure in cases where the limit problem is not well--seated. Just to give an example, one can think for instance at equation \eqref{eq:NLSR2} with Sobolev critical power $p=2^*$ on bounded domains of $\R^d$, where even existence of positive solutions is often an open question (whereas \cite{DDGS} showed that action ground states always exist on grids for every $p>2$). Let us stress, however, that at the time being it is not clear to us whether our method can be extended to deal with $p=2^*$. We plan to further investigate this point in future works. 

Theorems \ref{thm:action}--\ref{thm:2dsquare} also say that ground states in $\R^d$ provide an effective description of suitable ground states on grids with small edgelength. Since, by simple scaling arguments (see Remark \ref{rem:GetoG1} below), such ground states on $\G_\varepsilon^d$ are in one--to--one correspondence with action ground states at small $\omega$ and energy ground states at small masses on the grid $\G_1^d$ with edges of length 1, this allows one to exploit ground states on $\R^d$ to derive qualitative properties of ground states on $\G_1^d$.  We briefly discuss here two explicit instances of this approach.

First, observe that action and energy ground states are positive solutions of the NLS equation that, in general, can be different from each other (for recent discussions on this point in full generality see e.g. \cite{DST22,JL}). However, since \eqref{eq:NLSR2} in $\R^d$ admits a unique positive solution decaying at infinity, these two notions on ground states do coincide in $\R^d$, in the sense that there is a one--to--one correspondence between $\omega>0$ and $\mu>0$ such that the action ground state $\varphi_\omega$ and the energy ground state $\phi_\mu$ are actually the same function. Furthermore, such solutions are always radially symmetric and decreasing in $\R^d$. On the contrary, on the grid nothing is known neither about the relation between action and energy ground states nor about their symmetry.  With respect to this, even though we are still not able to tackle these problems at any fixed $\omega$ and $\mu$, when $p\in\left(2,2+\f4d\right)$ Theorems \ref{thm:action}--\ref{thm:2dsquare} provide a first indication that these properties are recovered asymptotically in the limits for $\omega\to0$ and $\mu\to0$ (as both ground states, suitably scaled and extended to $\R^d$ through $\mathcal{A}$, converge to the same symmetric function). 

Second, as a by--product of the argument developed to prove Theorem \ref{thm:action}, we obtain the following multiplicity result for mass constrained critical points of the energy at large masses on $\G_1^d$.
\begin{proposition}
	\label{prop:mult}
	Let $d\in\left\{2,3\right\}$ and $p\in\left(2+\f4d,6\right)$ or $d\geq4$ and $p\in\left(2+\f4d,2^*\right)$ be fixed. Then there exists $(\mu_n)_n\subset\R$, with $\mu_n\to+\infty$ as $n\to+\infty$, such that $\widetilde{E}_{\G_1^d}$ has at least two different critical points in $H_{\mu_n}^1(\G_1^d)$ for every $n$.
\end{proposition}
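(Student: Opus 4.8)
The plan is to realise the two required critical points as action ground states of $\wJ_{\lambda,\G_1^d}$ at two different frequencies sharing the same mass. First I would record the elementary but decisive fact that any ground state $v_\lambda\in\widetilde\NN_{\lambda,\G_1^d}$ is automatically a constrained critical point of $\widetilde E_{\G_1^d}$ on $H^1_{\mu(\lambda)}(\G_1^d)$, where $\mu(\lambda):=\|v_\lambda\|_{L^2(\G_1^d)}^2$. Indeed, dividing the Nehari constraint by $d$ gives $\f1d\|v_\lambda\|_{L^p(\G_1^d)}^p-\|v_\lambda'\|_{L^2(\G_1^d)}^2=\f\lambda d\|v_\lambda\|_{L^2(\G_1^d)}^2$, so $\mathcal L_{\G_1^d}(v_\lambda)=\lambda/d$ and $v_\lambda$ solves the Euler--Lagrange equation for $\widetilde E_{\G_1^d}$ with multiplier $\lambda/d$. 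Consequently, two frequencies $\lambda_1\neq\lambda_2$ with $\mu(\lambda_1)=\mu(\lambda_2)=\mu$ produce two \emph{distinct} critical points $v_{\lambda_1},v_{\lambda_2}$ of $\widetilde E_{\G_1^d}$ at mass $\mu$ (distinct because a common function would solve the same NLS equation with the two different multipliers $\lambda_1/d\neq\lambda_2/d$, forcing it to vanish). The whole problem is thus reduced to showing that the mass map $\lambda\mapsto\mu(\lambda)$ is non--injective on $(0,\infty)$, taking arbitrarily large values at least twice.

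The core of the argument is to prove that $\mu(\lambda)\to+\infty$ at \emph{both} ends of the frequency range. For $\lambda\to0^+$ I would invoke Theorem \ref{thm:action} through the scaling of Remark \ref{rem:GetoG1}: writing $v_\lambda(\cdot)=\varepsilon^{\f2{p-2}}u_\varepsilon(\varepsilon\,\cdot)$ turns a ground state $u_\varepsilon$ of $\wJ_{\omega,\G_\varepsilon^d}$ at fixed $\omega$ into a ground state on $\G_1^d$ at frequency $\lambda=\omega\varepsilon^2$. Combining the $H^1(\R^d)$ convergence $\mathcal A u_\varepsilon\to\varphi_\omega$ of Theorem \ref{thm:action}(ii) with the leading--order relation $\|u_\varepsilon\|_{L^2(\G_\varepsilon^d)}^2\sim \f d{\varepsilon^{d-1}}\|\mathcal A u_\varepsilon\|_{L^2(\R^d)}^2$ (the same local volume factor $\varepsilon^{d-1}$ of the theorems, reflecting that each cube carries $d$ edges of length $\varepsilon$), the scaling yields $\mu(\lambda)\sim C\,\lambda^{\f2{p-2}-\f d2}$ as $\lambda\to0^+$. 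Since $p>2+\f4d$, the exponent $\f2{p-2}-\f d2$ is \emph{negative}, so $\mu(\lambda)\to+\infty$. For $\lambda\to+\infty$ the behaviour is instead governed by the one--dimensional microstructure: I would show that, after rescaling, $v_\lambda$ concentrates on a single edge and converges to the soliton of $\Phi''+\f1d|\Phi|^{p-2}\Phi=\f1d\Phi$ on $\R$, whence $\mu(\lambda)\sim C'\lambda^{\f2{p-2}-\f12}$. Because $p<6$ (which for $d\geq3$ follows from $p<2^*\leq6$, and for $d=2$ is imposed directly), the exponent $\f2{p-2}-\f12$ is now \emph{positive}, so again $\mu(\lambda)\to+\infty$.

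With the two--sided blow--up in hand, I would conclude by continuity and the intermediate value theorem. The level $\widetilde\JJ_{\G_1^d}(\lambda)$ is continuous and strictly increasing, with $\widetilde\JJ_{\G_1^d}'(\lambda)=\mu(\lambda)/(2d)$ wherever differentiable; continuous dependence of ground states on the frequency, up to $\Z^d$--translations via the compactness underlying the existence theory, makes $\lambda\mapsto\mu(\lambda)$ continuous on $(0,\infty)$. Being continuous, positive and divergent at both endpoints, $\mu$ attains a minimum $m_0>0$, and for every $\mu>m_0$ there exist at least two frequencies $\lambda_1<\lambda_2$ with $\mu(\lambda_1)=\mu(\lambda_2)=\mu$. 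Choosing $\mu_n\to+\infty$ and applying the reduction of the first paragraph produces, for each $n$, two distinct critical points of $\widetilde E_{\G_1^d}$ in $H^1_{\mu_n}(\G_1^d)$.

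I expect the main obstacle to be the large--frequency analysis, i.e. the proof that $\mu(\lambda)\to+\infty$ as $\lambda\to+\infty$ by concentration onto a single edge. This limit is \emph{not} covered by Theorem \ref{thm:action} (it corresponds to the opposite regime of arbitrarily long edges) and is exactly where the hypothesis $p<6$ enters: one must show that the ground state asymptotically ignores the higher--dimensional connectivity of the grid and behaves like a purely one--dimensional soliton, for which the relevant Gagliardo--Nirenberg scaling is subcritical precisely when $p<6$. A secondary technical point is the continuity of $\mu(\lambda)$ in the absence of a uniqueness theory for ground states on grids; this can be handled by a standard compactness and continuous--dependence argument, but must be stated with care.
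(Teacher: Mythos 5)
Your reduction in the first paragraph is correct: an action ground state $v_\lambda\in\widetilde\NN_{\lambda,\G_1^d}$ is a constrained critical point of $\widetilde E_{\G_1^d}$ with multiplier $\mathcal{L}_{\G_1^d}(v_\lambda)=\lambda/d$, and two equal-mass ground states at distinct frequencies must be distinct functions. Your small-frequency asymptotics is also sound (it is exactly Proposition \ref{prop:JG1}: $\|v\|_{L^2(\G_1^d)}^2\gtrsim\omega^{\f{4-d(p-2)}{2(p-2)}}\to+\infty$ when $p>2+\f4d$). But the proposal has two genuine gaps. The first is the large-frequency claim: that $v_\lambda$ concentrates on a single edge and $\mu(\lambda)\sim C'\lambda^{\f2{p-2}-\f12}$ as $\lambda\to+\infty$. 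You correctly flag this as the main obstacle, but it is not proved, it is not a consequence of anything in the paper (Theorem \ref{thm:action} covers the opposite regime of vanishing edgelength), and it is itself a dimensional-crossover result of comparable difficulty to the theorems of \cite{ADST,DT22}. Without it your scheme produces only one family of critical points.

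The second gap is more structural: the continuity/intermediate-value step fails as stated. Ground states of $\wJ_{\lambda,\G_1^d}$ need not be unique, so $\lambda\mapsto\mu(\lambda)$ is a multivalued map; the compactness underlying the existence theory yields at best closedness of its graph, not connectedness of its image, and the mass map of action ground states can genuinely jump at frequencies where ground states are non-unique (precisely the phenomenon analysed in \cite{DST22}, cited in the paper). The identity $\widetilde{\JJ}_{\G_1^d}'(\lambda)=\mu(\lambda)/(2d)$ holds only where $\widetilde{\JJ}_{\G_1^d}$ is differentiable and does not rescue continuity. Hence ``divergent at both ends $+$ IVT'' is not available: for a prescribed large $\mu$ there may be no frequency at all whose ground state has mass exactly $\mu$, and your exact-mass matching could fail. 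The paper's proof avoids both difficulties: it defines $\mu_n$ as the masses of action ground states $v_n$ at frequencies $\omega_n\to0$ (these tend to $+\infty$ by Proposition \ref{prop:JG1}), pairs each $v_n$ with an \emph{energy} ground state at the same mass $\mu_n$ --- which exists for every sufficiently large mass by adapting \cite{ADST}, so no matching argument is needed --- and distinguishes the two via their multipliers: $\mathcal{L}_{\G_1^d}(v_n)=\omega_n/d\to0$, whereas a one-edge 1D soliton test function gives $\widetilde{\EE}_{\G_1^d}(\mu)\leq-\f{C_p}2\mu^{2\beta+1}$ with $\beta=\f{p-2}{6-p}>0$ (this is exactly where $p<6$ enters), which through \eqref{eq:LconE} forces $\mathcal{L}_{\G_1^d}\to+\infty$ along energy ground states. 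If you wish to keep the ``two action ground states'' strategy, you would need to supply both the large-$\lambda$ concentration analysis and a substitute for the continuity of the mass map.
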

The two critical points in Proposition \ref{prop:mult} are an energy and an action ground state. Note that existence of large mass energy ground states on $\G_1^d$ can be proved for every $p\in\left[2+\f4d,6\right)$ arguing as in \cite{ADST}. That the two solutions are actually different is a consequence of the fact that, when the mass is large enough, energy  ground states share a large Lagrange multiplier, whereas Theorem \ref{thm:action} guarantees that large mass action ground states correspond to $\omega$ close to $0$.  Observe that, since Theorem \ref{thm:action} applies only when $p\in(2,2^*)$, when $d\geq4$ the above multiplicity result does not cover the whole range $p\in\left(2+\f4d,6\right)$ where energy ground states at large masses do exist.

\smallskip
We now turn our attention to the comparison for Sobolev \eqref{eq:sobG}--\eqref{eq:sobRd} and Gagliardo--Nirenberg \eqref{eq:GNd} inequalities. In $\R^d$, the sharp constant in Sobolev inequality \eqref{eq:sobRd} is well--known (see \cite{talenti})
\[
\mathcal{S}_{\R^d}:=\sup_{u\in W^{1,1}(\R^d)}\f{\|u\|_{L^{\f d{d-1}}(\R^d)}}{\|\nabla u\|_{L^1(\R^d)}}=\f{\left(\Gamma\left(1+\f d2\right)\right)^{\f1d}}{d\sqrt{\pi}}\,,
\]
where $\Gamma$ is the gamma function, and it is not attained. On the $d$--dimensional grid of edgelength $1$, by \cite{Ham} it follows that
\[
\mathcal{S}_{\G_1^d}:=\sup_{u\in W^{1,1}(\G_1^d)}\f{\|u\|_{L^{\f d{d-1}}(\G_1^d)}}{\|u'\|_{L^1(\G_1^d)}}=\f1{(2d)^{\f 1d}}
\]
and it is easily seen that it is not attained too. Since elementary computations show that
\[
 \f1{(2d)^{\f 1d}}>\f{\left(\Gamma\left(1+\f d2\right)\right)^{\f1d}}{d\sqrt{\pi}}\qquad\forall d\geq2,
\]
this essentially exhausts the discussion on Sobolev inequalities.

The situation seems to be more involved for Gagliardo--Nirenberg inequalities. Letting
\begin{equation}
	\label{eq:defQ}
\begin{split}
Q_{q,\G_1^d}(u):=&\,\f{\|u\|_{L^q(\G_1^d)}^q}{\|u\|_{L^2(\G_1^d)}^{d+(2-d)\f q2}\|u'\|_{L^2(\G_1^d)}^{\left(\f q2-1\right)d}}, \qquad u\in H^1(\G_1^d)\,,\\
Q_{q,\R^d}(v):=&\,\f{\|v\|_{L^q(\R^d)}^q}{\|v\|_{L^2(\R^d)}^{d+(2-d)\f q2}\|\nabla v\|_{L^2(\R^d)}^{\left(\f q2-1\right)d}}, \qquad v\in H^1(\R^d)\,,
\end{split}
\end{equation}
and denoting by
\begin{equation}
\label{eq:optGn}
K_{q,\G_1^d}:=\sup_{u\in H^1(\G_1^d)}Q_{q,\G_1^d}(u),\qquad K_{q,\R^d}:=\sup_{v\in H^1(\R^d)}Q_{q,\R^d}(v)
\end{equation}
the best constants in \eqref{eq:GNd}, we have the next partial result.
\begin{theorem}
\label{thm:GN} 
For every $d\geq2$ and $q\in\left(2,2^*\right)$, there holds
\begin{equation}
\label{eq:KGgeqKR2}
K_{q,\G_1^d}\geq d^{\f{(d-2)(q-2)}4}K_{q,\R^d}\,.
\end{equation}
Furthermore, if $q\in\left[2+\f4d,2^*\right)$ then
\begin{equation}
	\label{eq:KG=KR2}
K_{q,\G_1^d}=d^{\f{(d-2)(q-2)}4}K_{q,\R^d}
\end{equation}
and $K_{q,\G_1^d}$ is not attained for every $q>2+\f4d$.
\end{theorem}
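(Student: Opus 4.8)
The plan is to combine an exact scaling identity on grids with two complementary comparison maps between $\G_\varepsilon^d$ and $\R^d$: restriction of $\R^d$--functions to fine grids, which yields the lower bound \eqref{eq:KGgeqKR2}, and the piecewise--affine extension $\mathcal{A}$, which yields the matching upper bound in \eqref{eq:KG=KR2} and drives the non--attainment. First I would record the scaling law. For $u\in H^1(\G_1^d)$ let $u_\varepsilon\in H^1(\G_\varepsilon^d)$ be the edgewise rescaling of $u$; computing the three norms gives $Q_{q,\G_\varepsilon^d}(u_\varepsilon)=\varepsilon^{\f{(d-1)(q-2)}2}Q_{q,\G_1^d}(u)$, and since $u\mapsto u_\varepsilon$ is a bijection,
\[
K_{q,\G_\varepsilon^d}=\varepsilon^{\f{(d-1)(q-2)}2}K_{q,\G_1^d}\qquad\text{for every }\varepsilon>0.
\]
This turns every statement about $K_{q,\G_1^d}$ into an $\varepsilon$--uniform one that I can probe as $\varepsilon\to0$.

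For \eqref{eq:KGgeqKR2}, fix $v\in C_c^\infty(\R^d)$ and restrict it to $\G_\varepsilon^d$. Grouping edges by direction, each family of axis--parallel edges realises a Riemann sum, so as $\varepsilon\to0$ one gets $\|v\|_{L^q(\G_\varepsilon^d)}^q\sim d\,\varepsilon^{1-d}\|v\|_{L^q(\R^d)}^q$ and $\|v\|_{L^2(\G_\varepsilon^d)}^2\sim d\,\varepsilon^{1-d}\|v\|_{L^2(\R^d)}^2$, while $\|v'\|_{L^2(\G_\varepsilon^d)}^2\sim \varepsilon^{1-d}\|\nabla v\|_{L^2(\R^d)}^2$. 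The absence of the factor $d$ in the last relation is exactly the point: along each direction the edge derivative only sees one partial derivative, whereas the zeroth--order norms pick up all $d$ directions. Plugging these into $Q_{q,\G_\varepsilon^d}$ and bookkeeping the powers of $d$ (which collapse to $d^{\f{(d-2)(q-2)}4}$) and of $\varepsilon^{1-d}$ (which collapse to $\varepsilon^{\f{(d-1)(q-2)}2}$), I obtain $\varepsilon^{-\f{(d-1)(q-2)}2}Q_{q,\G_\varepsilon^d}(v|_{\G_\varepsilon^d})\to d^{\f{(d-2)(q-2)}4}Q_{q,\R^d}(v)$. Since the scaling law makes the left-hand side $\leq K_{q,\G_1^d}$ for every $\varepsilon$, letting $\varepsilon\to0$ and then taking the supremum over the dense class of test functions gives \eqref{eq:KGgeqKR2} for the whole range $q\in(2,2^*)$.

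The core of the theorem is the reverse bound in \eqref{eq:KG=KR2}, and this is where I expect the real difficulty. I would take a near--optimizing sequence $u_n$ for $K_{q,\G_1^d}$ and pass to $\mathcal{A}u_n$ on $\R^d$: on each simplex $\nabla\mathcal{A}u_n$ is the constant vector of discrete edge slopes, so Cauchy--Schwarz on the spine edges controls $\|\nabla\mathcal{A}u_n\|_{L^2(\R^d)}^2$ by the grid Dirichlet energy, and an analogous estimate controls $\|\mathcal{A}u_n\|_{L^2(\R^d)}^2$. The obstruction, and the heart of the matter, is the numerator: $\mathcal{A}u_n$ only records vertex values, so $L^q$--mass concentrated in edge interiors is invisible to the extension, and a priori $\|u_n\|_{L^q(\G_1^d)}$ could exceed $\|\mathcal{A}u_n\|_{L^q(\R^d)}$. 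The plan is to rule out such one--dimensional concentration by a defect estimate on each edge (of the type behind Proposition \ref{prop:GNint}), bounding the edge excess $\int_e|u|^q$ beyond its vertex contribution by $\|u'\|_{L^2(e)}$ and $\|u\|_{L^2(e)}$; comparing exponents against the $d$--dimensional denominator shows this defect is subcritical precisely when $q\geq 2+\f4d$ --- the $L^2$--critical threshold being exactly where edgewise concentration stops being competitive with the bulk. Hence for $q\ge 2+\f4d$ the near--optimizers must be bulk (slowly varying) functions, for which $\mathcal{A}$ is an approximate inverse of restriction and the Riemann--sum asymptotics of the second paragraph apply in reverse, giving $Q_{q,\G_1^d}(u_n)\leq d^{\f{(d-2)(q-2)}4}Q_{q,\R^d}(\mathcal{A}u_n)\,(1+o(1))\le d^{\f{(d-2)(q-2)}4}K_{q,\R^d}\,(1+o(1))$. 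Together with \eqref{eq:KGgeqKR2} this proves \eqref{eq:KG=KR2}. (An alternative route for the equality is to express both $\widetilde{\JJ}_{\G_\varepsilon^d}(\omega)$ and $\JJ_{\R^d}(\omega)$ through their Gagliardo--Nirenberg constants and feed in the convergence of action levels from Theorem \ref{thm:action}; I would keep the direct argument as the main line since it avoids circularity with the attainment question.)

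Finally, for non--attainment when $q>2+\f4d$: if some $u^\ast\in H^1(\G_1^d)$ realised $K_{q,\G_1^d}$, then by the strict suppression of edge concentration it would be a bulk maximizer, and equality would propagate through the comparison chain, forcing equality in every Cauchy--Schwarz step (so $u^\ast$ affine on each edge) and forcing the rescaled extension of $u^\ast$ to be an optimizer of $Q_{q,\R^d}$, i.e. a translate and dilate of the smooth, radially symmetric $\R^d$ ground state. A piecewise--affine function on the unit grid cannot coincide with that profile, so at any fixed scale the discretization defect is strictly positive and $Q_{q,\G_1^d}(u^\ast)<d^{\f{(d-2)(q-2)}4}K_{q,\R^d}$, a contradiction; the supremum is thus approached only by functions spreading to infinity and is never attained. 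I expect the single genuine obstacle throughout to be the edge--interior defect estimate in the reverse inequality and the verification that $q\ge 2+\f4d$ is exactly the range in which it closes.
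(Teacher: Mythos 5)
Your first paragraph and your lower bound argument are exactly the paper's: the scaling identity is Lemma \ref{lem:GNeps}, and restricting a smooth optimizer of $Q_{q,\R^d}$ to $\G_\varepsilon^d$ with the Riemann--sum asymptotics of Lemma \ref{lem:restr} gives \eqref{eq:KGgeqKR2}. The genuine gap is in your reverse inequality, at the step where you claim that an exponent count in the edge--defect estimate shows near--optimizers ``must be bulk'' precisely when $q\geq 2+\f4d$. That is not what the estimate gives. Normalizing $\|u_n\|_{L^2(\G_1^d)}=1$, the edge excess is bounded by $q\|u_n\|_{L^{2(q-1)}(\G_1^d)}^{q-1}\|u_n'\|_{L^2(\G_1^d)}\lesssim\|u_n'\|_{L^2(\G_1^d)}^{\f{(q-2)d}2+1}$ (by \eqref{eq:dgnGe} when $2(q-1)\leq 2^*$, and similarly via Proposition \ref{prop:GNint} otherwise), while the main term is $\|u_n\|_{L^q(\G_1^d)}^q=Q_{q,\G_1^d}(u_n)\,\|u_n'\|_{L^2(\G_1^d)}^{\left(\f q2-1\right)d}$; the ratio is of order $\|u_n'\|_{L^2(\G_1^d)}$ \emph{for every} admissible $q$. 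So the defect is negligible if and only if $\|u_n'\|_{L^2(\G_1^d)}\to0$, which is precisely the statement to be proved, and no exponent comparison delivers it. Your heuristic for the threshold is also off: a bump at scale $\delta$ on a single edge has $Q_{q,\G_1^d}\sim\delta^{\left(\f q2-1\right)(d-1)}\to0$, so edge concentration is never competitive for any $q>2$; the threshold $2+\f4d$ does not arise there. The paper obtains $\|u_n'\|_{L^2(\G_1^d)}\to0$ by a genuine concentration--compactness argument: pass to a weak limit $u$ with $m=\|u\|_{L^2}^2$, exclude $u\equiv0$ using the sup--norm normalization, exclude $m\in(0,1)$ by Brezis--Lieb together with strict subadditivity of $x^{a}y^{b}+(1-x)^{a}(1-y)^{b}$ on $(0,1)^2$, and --- crucially --- exclude $m=1$ because strong convergence would make $K_{q,\G_1^d}$ attained, contradicting Lemma \ref{lem:nonex}. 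Hence non--attainment must be established \emph{before} the equality, and that is exactly where the restriction $q>2+\f4d$ enters.

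This also breaks the logical order of your last paragraph: you derive non--attainment \emph{from} the equality by ``propagating equality through the comparison chain'', but there is no exact chain at unit scale. All comparisons between $Q_{q,\G_1^d}$ and $Q_{q,\R^d}\circ\mathcal{A}$ with the sharp constant $d^{\f{(d-2)(q-2)}4}$ hold only asymptotically, after rescaling $w_n(x)=u_n(x/\varepsilon_n)$ with $\varepsilon_n=\|u_n'\|_{L^2(\G_1^d)}\to0$, and carry $o(1)$ errors; at fixed scale the available bounds (e.g.\ \eqref{eq:AL2}, whose constant is $2^d(d+1)$ rather than $\f1d$) are far from sharp, so an exact optimizer on $\G_1^d$ forces nothing about $\mathcal{A}u^\ast$. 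The paper's Lemma \ref{lem:nonex} is instead elementary and independent of \eqref{eq:KG=KR2}: if $Q_{q,\G_1^d}(u^\ast)=K_{q,\G_1^d}$, then by homogeneity $Q_{q,\G_1^d}(cu^\ast)=K_{q,\G_1^d}$ for all $c>0$; for small $c$ one has $\|(cu^\ast)'\|_{L^2}^2\geq\|cu^\ast\|_{L^q}^q$, and combining this with the one--dimensional inequality \eqref{eq:gn1d} yields $\|cu^\ast\|_{L^2(\G_1^d)}^{\f{2(2-q)}{4-(q-2)d}}\gtrsim1$, whose exponent is positive for $q>2+\f4d$, a contradiction as $c\to0^+$. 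To repair your proof you would essentially have to reorganize it into the paper's order: prove non--attainment first by this homogeneity argument, use it inside the dichotomy to get $\|u_n'\|_{L^2}\to0$, and only then rescale and apply the extension estimates to conclude \eqref{eq:KG=KR2}.
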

When $q\in\left[2+\f4d,2^*\right)$, on one side Theorem \ref{thm:GN} establishes a strong analogy between cubic grids and $\R^d$, with optimal constants being the same up to a factor depending only on the dimension. Interestingly, when $d=2$, the two constants are exactly the same. On the other side, it marks a difference, since such a constant is not attained by any function in $H^1(\G_1^d)$ for $q\in\left(2+\f4d,2^*\right)$, whereas existence of optimizers is well--known in $\R^d$ for every $q\in\left(2,2^*\right)$. As the proof of Theorem \ref{thm:GN} reveals, non--existence of optimizers on $\G_1^d$ is a consequence of the validity, for every $q>2$, of the Gagliardo--Nirenberg inequality (see e.g. \cite[Theorem 2.1]{ADST} in the two--dimensional setting)
\begin{equation}
\label{eq:gn1d}
\|u\|_{L^q(\G_1^d)}^q\leq C_q\|u\|_{L^2(\G_1^d)}^{\f q2+1}\|u'\|_{L^2(\G_1^d)}^{\f q2-1}\qquad\forall u\in H^1(\G_1^d)\,,
\end{equation}
where $C_q>0$ depends only on $q$.
Since \eqref{eq:gn1d} is based in the one--dimensional microscale of the graph, this is a peculiar feature of metric grids, that has no counterpart both in $\R^d$ and in the discrete setting $\Z^d$, where the one--dimensional local structure is absent \cite{weinstein}. 

Conversely, for $q\in\left(2,2+\f4d\right)$ we are not able at the moment to improve the upper bound \eqref{eq:KGgeqKR2}. It remains an open problem to understand whether in this regime it is possible that $K_{q,\G_1^d}$ be attained and, this being the case, whether one can recover a convergence result for optimizers of \eqref{eq:GNd} similar to those in Theorems \ref{thm:action}--\ref{thm:2dsquare}. Our best result in this direction at present is the following existence criterion.
\begin{proposition}
	\label{prop:ex24}
	Let $q\in\left(2,2+\f4d\right]$. If there exists $u\in H^1(\G_1^d)$ such that $Q_{q,\G_1^d}(u)\geq d^{\f{(d-2)(q-2)}4}K_{q,\R^d}$, then $K_{q,\G_1^d}$ is attained.
\end{proposition}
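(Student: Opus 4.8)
The plan is to run a concentration--compactness argument for a maximizing sequence of $Q_{q,\G_1^d}$, exploiting the two competing scales of the grid to isolate the only scale at which a maximizer can survive. First I would record the reduction afforded by the hypothesis. Since \eqref{eq:KGgeqKR2} already gives $K_{q,\G_1^d}\geq d^{\frac{(d-2)(q-2)}4}K_{q,\R^d}$, the assumption furnishes a $u$ with $Q_{q,\G_1^d}(u)\geq d^{\frac{(d-2)(q-2)}4}K_{q,\R^d}$, so either $K_{q,\G_1^d}=d^{\frac{(d-2)(q-2)}4}K_{q,\R^d}$ and $u$ already attains the supremum, or
\[
K_{q,\G_1^d}>d^{\frac{(d-2)(q-2)}4}K_{q,\R^d}.
\]
It thus suffices to treat this strict case. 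I would fix a maximizing sequence $(u_n)\subset H^1(\G_1^d)$ and, using the invariance of $Q_{q,\G_1^d}$ under $u\mapsto cu$, normalize $\|u_n\|_{L^2(\G_1^d)}=1$, so that $Q_{q,\G_1^d}(u_n)=\|u_n\|_{L^q(\G_1^d)}^q\big/\|u_n'\|_{L^2(\G_1^d)}^{(\frac q2-1)d}\to K_{q,\G_1^d}$.

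The core of the argument is to confine the scale $\|u_n'\|_{L^2(\G_1^d)}$ to a compact subinterval of $(0,+\infty)$. An upper bound comes from the one--dimensional inequality \eqref{eq:gn1d}: dividing it by the denominator of $Q_{q,\G_1^d}$ yields
\[
Q_{q,\G_1^d}(u)\leq C_q\left(\frac{\|u\|_{L^2(\G_1^d)}}{\|u'\|_{L^2(\G_1^d)}}\right)^{\frac{(d-1)(q-2)}2},
\]
so that $\|u_n'\|_{L^2(\G_1^d)}$ must stay bounded, since otherwise $Q_{q,\G_1^d}(u_n)\to0$; this is the mechanism ruling out microscale concentration. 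The matching lower bound is where the strict gap enters: if $\|u_n'\|_{L^2(\G_1^d)}\to0$, the functions become macroscopically flat over diverging numbers of edges, and a comparison between the grid and $\R^d$ Gagliardo--Nirenberg quotients forces $\limsup_n Q_{q,\G_1^d}(u_n)\leq d^{\frac{(d-2)(q-2)}4}K_{q,\R^d}<K_{q,\G_1^d}$, a contradiction. Hence $\delta\leq\|u_n'\|_{L^2(\G_1^d)}\leq C$ and $(u_n)$ is bounded in $H^1(\G_1^d)$; passing to a subsequence, $\|u_n'\|_{L^2(\G_1^d)}\to\ell\in[\delta,C]$.

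I would then apply concentration--compactness on the $\Z^d$--periodic grid to the bounded sequence $(u_n)$. Vanishing is excluded by a Lions--type lemma: if the $L^2$ mass on every unit cell tends to zero, then $\|u_n\|_{L^q(\G_1^d)}\to0$, whence $Q_{q,\G_1^d}(u_n)\to0$, against $K_{q,\G_1^d}>0$. Dichotomy is excluded by a strict subadditivity estimate: since $q\leq 2+\frac4d$ the problem is mass--subcritical, so splitting the mass into two separating bumps is strictly suboptimal, the residual strict gap handling the endpoint $q=2+\frac4d$. Consequently, after a lattice translation $u_n(\cdot-y_n)\rightharpoonup u$ in $H^1(\G_1^d)$ with $u\neq0$ and strong convergence in $L^q(\G_1^d)$. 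The maximality then passes to the limit essentially for free, because the $H^1$ norms sit in the denominator of $Q_{q,\G_1^d}$: by weak lower semicontinuity $\|u\|_{L^2(\G_1^d)}\leq1$ and $\|u'\|_{L^2(\G_1^d)}\leq\ell$, while $\|u\|_{L^q(\G_1^d)}^q=\lim_n\|u_n\|_{L^q(\G_1^d)}^q=K_{q,\G_1^d}\,\ell^{(\frac q2-1)d}$, so that $Q_{q,\G_1^d}(u)\geq K_{q,\G_1^d}$ and $u$ attains $K_{q,\G_1^d}$.

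The main obstacle is the macroscopic comparison used for the lower scale bound, namely that $\|u_n'\|_{L^2(\G_1^d)}\to0$ with unit mass forces $\limsup_nQ_{q,\G_1^d}(u_n)\leq d^{\frac{(d-2)(q-2)}4}K_{q,\R^d}$. This is exactly the point at which the interaction between the one--dimensional microscale and the $d$--dimensional macroscale of the grid has to be quantified, and I expect to establish it through the piecewise--affine extension $\mathcal{A}$ together with the grid--to--$\R^d$ estimates underlying Theorem \ref{thm:GN}, after rescaling $\G_1^d$ to $\G_{\varepsilon_n}^d$ with $\varepsilon_n\to0$ so as to convert the flattening into a vanishing--edgelength limit. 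A secondary difficulty is the strict subadditivity ruling out dichotomy, in particular at the critical endpoint $q=2+\frac4d$, which I expect to close using precisely the strict gap $K_{q,\G_1^d}>d^{\frac{(d-2)(q-2)}4}K_{q,\R^d}$.
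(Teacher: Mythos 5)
Your proposal is correct and follows essentially the same route as the paper's proof: reduction to the strict-gap case $K_{q,\G_1^d}>d^{\f{(d-2)(q-2)}4}K_{q,\R^d}$, $H^1$--boundedness of a normalized maximizing sequence via the one--dimensional inequality \eqref{eq:gn1d}, exclusion of the flat regime $\|u_n'\|_{L^2(\G_1^d)}\to0$ by rescaling to $\G_{\varepsilon_n}^d$ with $\varepsilon_n=\|u_n'\|_{L^2(\G_1^d)}$ and comparing with $K_{q,\R^d}$ through the piecewise--affine extension $\mathcal{A}$, and a concentration--compactness trichotomy closed by Brezis--Lieb splitting. The only substantive difference is in attribution rather than substance: the paper organizes the trichotomy on the mass $m=\|u\|_{L^2(\G_1^d)}^2\in[0,1]$ of the weak limit (translations being fixed by where $u_n$ attains its $L^\infty$ norm) and rules out dichotomy via the strict inequality $x^ay^b+(1-x)^a(1-y)^b<1$ on $(0,1)^2$, which holds for every $q>2$, so neither mass--subcriticality nor the strict gap is needed at that step, whereas your outline invokes both there.
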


To conclude this section, let us stress that the method developed here is by no means limited to the specific case of cubic grids. It is in fact fairly easy to generalize our approach to grids with periodicity cell of different shape. For the sake of completeness, the final section of the paper briefly overviews how this can be done on two specific examples of non--square grids in the plane: the regular triangular grid (Figure \ref{fig:altre}(A)) and the regular hexagonal one (Figure \ref{fig:altre}(B)).

\begin{figure}[t]
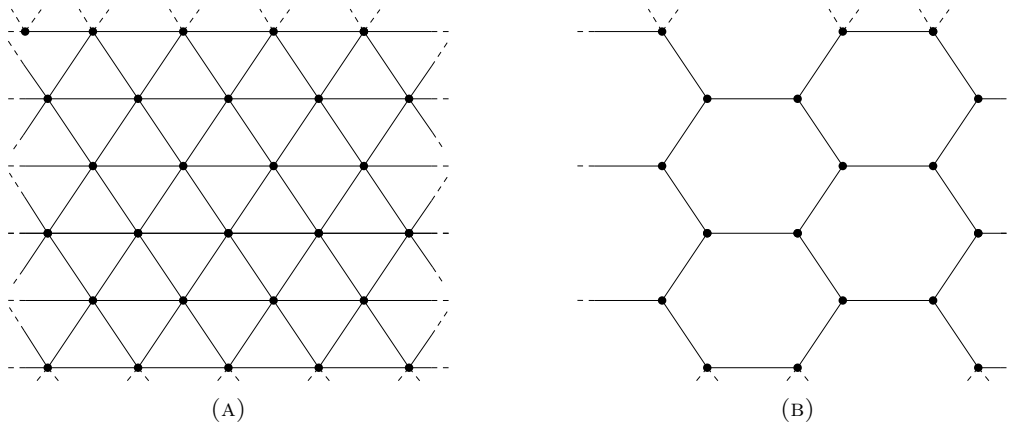

	\centering
	\subfloat[][ ]{\includegraphics[width=0.35\columnwidth]{triangular}}
	\qquad\qquad
	\subfloat[][ ]{\includegraphics[width=0.35\columnwidth]{exhagonal}}
	\caption{Two--dimensional regular triangular (A) and hexagonal (B) grids.}
	\label{fig:altre}
\end{figure}

\smallskip
The remainder of the paper is organized as follows. Section \ref{sec:prelGN} establishes some results on Gagliardo--Nirenberg inequalities on grids. Section \ref{sec:prel} provides general estimates involving restrictions of functions from $\R^d$ to grids and extensions of functions from grids to $\R^d$ that will be widely used throughout the discussion. Section \ref{sec:apriori} is devoted to an a priori analysis of the ground state problems on grids with edges of fixed length, whereas the proof of Theorems \ref{thm:action}--\ref{thm:2dsquare} and of Proposition \ref{prop:mult} is given in Section \ref{sec:proof}. Section \ref{sec:gn} reports the proof of Theorem \ref{thm:GN} and Proposition \ref{prop:ex24}. 

\section*{Acknowledgements}
\noindent The author wishes to thank Riccardo Adami, Claudio Canuto, Enrico Serra, Paolo Tilli and Lorenzo Tentarelli for useful discussions and suggestions during the preparation of this work.
The work has been partially supported by the INdAM GNAMPA project 2022 ``Modelli matematici con singolarità per fenomeni di interazione".

%\section*{Data availability statement}
%\noindent Data sharing not applicable to this article as no datasets were generated or analysed during the current study.
%
%\section*{Conflict of interest}
%\noindent The author states that there is no conflict of interest.

\bigskip

\noindent{\em Notation:} in what follows, we will often use the symbols $\lesssim_{a,b,c}\,,\, \gtrsim_{a,b,c}$ to indicate that the corresponding estimates hold up to a multiplicative constant depending only on the parameters $a$, $b$, $c$. 

\section{Gagliardo--Nirenberg inequalities on $\G_\varepsilon^d$}
\label{sec:prelGN}

Given their importance throughout all the paper, we start with some results about Gagliardo--Nirenberg inequalities on $d$--dimensional cubic grids that will be used in the following. 

As anticipated in the Introduction, the analysis developed in dimensions two and three in \cite{AD,ADST} can be easily adapted to show that, for every $d\geq 2$, $q\in(2,2^*]$ and $\varepsilon>0$, there exists $K_{q,\G_\varepsilon^d}>0$, depending only on $d$ and $q$, such that the $d$--dimensional Gagliardo--Nirenberg inequality
\begin{equation}
\label{eq:dgnGe}
\|u\|_{L^q(\G_\varepsilon^d)}^q\leq K_{q,\G_\varepsilon^d}\|u\|_{L^2(\G_\varepsilon^d)}^{d+(2-d)\f q2}\|u'\|_{L^2(\G_\varepsilon^d)}^{\left(\f q2-1\right)d}
\end{equation}
holds true for every $u\in H^1(\G_\varepsilon^d)$. The first result of this section is a simple remark that relates the sharp constant in this inequality with the edgelength of the grid.

\begin{lemma}
	\label{lem:GNeps}
	Let $K_{q,\G_\varepsilon^d}$ be the sharp constant in \eqref{eq:dgnGe}. Then $K_{q,\G_\varepsilon^d}=\varepsilon^{\left(\f q2-1\right)(d-1)}K_{q,\G_1^d}$.
\end{lemma}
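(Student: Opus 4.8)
The plan is to prove the identity by a pure scaling argument, exploiting that $\G_\varepsilon^d$ is simply the dilation of $\G_1^d$ by the factor $\varepsilon$. First I would fix the natural correspondence between the two function spaces: to every $u\in H^1(\G_1^d)$ I associate $u_\varepsilon\in H^1(\G_\varepsilon^d)$ defined by $u_\varepsilon(y):=u(y/\varepsilon)$, which makes sense because $y\mapsto y/\varepsilon$ maps $\G_\varepsilon^d$ onto $\G_1^d$ edge by edge (sending $\varepsilon\Z^d$ to $\Z^d$ and an edge of length $\varepsilon$ to one of length $1$). The map $u\mapsto u_\varepsilon$ is then a bijection of $H^1(\G_1^d)$ onto $H^1(\G_\varepsilon^d)$, and the whole argument reduces to tracking how the three norms in the Gagliardo--Nirenberg quotient \eqref{eq:defQ} transform under it.

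The second step is this bookkeeping. Changing variables $x=y/\varepsilon$ on each edge, so that $dy=\varepsilon\dx$, gives $\|u_\varepsilon\|_{L^r(\G_\varepsilon^d)}^r=\varepsilon\|u\|_{L^r(\G_1^d)}^r$ for every $r\geq1$; in particular $\|u_\varepsilon\|_{L^q(\G_\varepsilon^d)}^q=\varepsilon\|u\|_{L^q(\G_1^d)}^q$ and $\|u_\varepsilon\|_{L^2(\G_\varepsilon^d)}=\varepsilon^{1/2}\|u\|_{L^2(\G_1^d)}$. Since $u_\varepsilon'(y)=\varepsilon^{-1}u'(y/\varepsilon)$, the same change of variables yields $\|u_\varepsilon'\|_{L^2(\G_\varepsilon^d)}^2=\varepsilon^{-1}\|u'\|_{L^2(\G_1^d)}^2$, i.e. $\|u_\varepsilon'\|_{L^2(\G_\varepsilon^d)}=\varepsilon^{-1/2}\|u'\|_{L^2(\G_1^d)}$.

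I would then insert these relations into the quotient $Q_{q,\G_\varepsilon^d}$ (the obvious analogue of $Q_{q,\G_1^d}$ in \eqref{eq:defQ}). The numerator carries a factor $\varepsilon$, while the denominator carries $\varepsilon^{\f12\left(d+(2-d)\f q2\right)}\cdot\varepsilon^{-\f12\left(\f q2-1\right)d}$. Collecting the powers of $\varepsilon$, one finds $Q_{q,\G_\varepsilon^d}(u_\varepsilon)=\varepsilon^{\theta}Q_{q,\G_1^d}(u)$ with
\[
\theta = 1 - \f12\left(d+(2-d)\f q2\right) + \f12\left(\f q2 -1\right)d
= \left(\f q2-1\right)(d-1),
\]
the last equality being a one-line simplification (grouping the $q$-independent and $q$-linear terms). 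Crucially, the factor $\varepsilon^{\theta}$ is independent of $u$.

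Finally, since $u\mapsto u_\varepsilon$ is a bijection and the quotient scales by the fixed factor $\varepsilon^{\left(\f q2-1\right)(d-1)}$, taking the supremum over all admissible functions gives at once
\[
K_{q,\G_\varepsilon^d}=\sup_{u\in H^1(\G_\varepsilon^d)}Q_{q,\G_\varepsilon^d}(u)=\varepsilon^{\left(\f q2-1\right)(d-1)}\sup_{u\in H^1(\G_1^d)}Q_{q,\G_1^d}(u)=\varepsilon^{\left(\f q2-1\right)(d-1)}K_{q,\G_1^d},
\]
which is the claim. I do not expect any genuine analytic obstacle here: the statement is a clean manifestation of the dimensional homogeneity of the grid, and the only point deserving attention is verifying that the exponent $\theta$ collapses exactly to $\left(\f q2-1\right)(d-1)$, which is precisely the combination of the Gagliardo--Nirenberg exponents that cancels the contribution of the edgelength.
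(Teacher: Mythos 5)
Your proposal is correct and follows essentially the same route as the paper: both proofs rest on the change of variables $u_\varepsilon(y)=u(y/\varepsilon)$, the resulting norm identities $\|u_\varepsilon\|_{L^r(\G_\varepsilon^d)}^r=\varepsilon\|u\|_{L^r(\G_1^d)}^r$ and $\|u_\varepsilon'\|_{L^2(\G_\varepsilon^d)}^2=\varepsilon^{-1}\|u'\|_{L^2(\G_1^d)}^2$, and the observation that the Gagliardo--Nirenberg exponents make the powers of $\varepsilon$ collapse to $\left(\f q2-1\right)(d-1)$. The only (cosmetic) difference is that you phrase it as an exact scaling identity for the quotient $Q$ under a bijection and take suprema once, whereas the paper proves the two one-sided inequalities for the sharp constants by applying the inequality on $\G_1^d$ and then inverting the roles of the two grids; the content is identical.
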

\begin{proof}
	Given $u\in H^1(\G_\varepsilon^d)$, let $v:\G_1^d\to\R$ be such that $u(x)=v(x/\varepsilon)$ for every $x\in \G_\varepsilon^d$. Then $v\in H^1(\G_1^d)$ and
	\[
	\|u\|_{L^r(\G_\varepsilon^d)}^r=\varepsilon\|v\|_{L^r(\G_1^d)}^r,\quad\forall r\geq1,\qquad\|u'\|_{L^2(\G_\varepsilon^d)}^2=\f1\varepsilon\|v'\|_{L^2(\G_1^d)}^2\,,
	\]
	so that, by \eqref{eq:dgnGe} on $\G_1^d$,
	\[
	\begin{split}
	\|u\|_{L^q(\G_\varepsilon^d)}^q=\varepsilon\|v\|_{L^q(\G_1^d)}^q\leq& \varepsilon K_{q,\G_1^d}\|v\|_{L^2(\G_1^d)}^{d+(2-d)\f q2}\|v'\|_{L^2(\G_1^d)}^{\left(\f q2-1\right)d}\\
	\leq& \varepsilon^{1-\f d2-(2-d)\f q4+\left(\f q4-\f12\right)d}K_{q,\G_1^d}\|u\|_{L^2(\G_\varepsilon^d)}^{d+(2-d)\f q2}\|u'\|_{L^2(\G_\varepsilon^d)}^{\left(\f q2-1\right)d}\\
	=&\varepsilon^{\left(\f q2 -1\right)(d-1)}K_{q,\G_1^d}\|u\|_{L^2(\G_\varepsilon^d)}^{d+(2-d)\f q2}\|u'\|_{L^2(\G_\varepsilon^d)}^{\left(\f q2-1\right)d}\,.
	\end{split}
	\]
	This shows that $K_{q,\G_\varepsilon^d}\leq \varepsilon^{\left(\f q2 -1\right)(d-1)}K_{q,\G_1^d}$, and arguing analogously inverting the role of $u$ and $v$ proves the reverse inequality.
\end{proof}
 Being peculiar of dimension $d$, \eqref{eq:dgnGe} is not available when $q>2^*$. However, on grids it is possible to exploit the coexistence of scales of different dimensions to obtain new families of Gagliardo--Nirenberg inequalities for powers larger than the $d$--dimensional Sobolev critical exponent. In particular, the next proposition, which to the best of our knowledge is new and can be of some independent interest, provides a result in this spirit interpolating between purely two--dimensional and $d$--dimensional inequalities. 
\begin{proposition}
	\label{prop:GNint}
	Let $d\geq3$. For every $\varepsilon>0$, $q>2^*$ and $\alpha\in(0,2]$, there exists $C>0$, depending only on $d$, $q$ and $\alpha$, such that
	\begin{equation}
	\label{eq:GNint}
	\|u\|_{L^q(\G_\varepsilon^d)}^q\leq C\varepsilon^{\f q2+1-\alpha}\|u\|_{L^2(\G_\varepsilon^d)}^\alpha\|u'\|_{L^2(\G_\varepsilon^d)}^{q-\alpha}\qquad\forall u\in H^1(\G_\varepsilon^d)\,.
	\end{equation}
\end{proposition}
	\begin{proof}
		Observe first that it is enough to prove \eqref{eq:GNint} when $\varepsilon=1$. Indeed, for every $\varepsilon\neq 1$ and $u\in H^1(\G_\varepsilon^d)$, letting $v\in H^1(\G_1^d)$ satisfy $u(x)=v(x/\varepsilon)$ for every $x\in\ \G_\varepsilon^d$, it follows
		\[
		\|u\|_{L^q(\G_\varepsilon^d)}^q=\varepsilon\|v\|_{L^q{(\G_1^d)}}^q\,\lesssim_{d,q,\alpha}\,\varepsilon \|v\|_{L^2(\G_1^d)}^\alpha\|v'\|_{L^2(\G_1^d)}^{q-\alpha}=\varepsilon^{\f q2+1-\alpha}\|u\|_{L^2(\G_\varepsilon^d)}^\alpha\|u'\|_{L^2(\G_\varepsilon^d)}^{q-\alpha}\,.
		\]
		The proof of \eqref{eq:GNint} on $\G_1^d$ is divided in two cases.
		
		\smallskip
		{\em Case 1: $\alpha=2$}. Note that, when $\alpha=2$, \eqref{eq:GNint} is the standard two--dimensional Gagliardo--Nirenberg inequality
		\begin{equation}
		\label{eq:GN2d}
		\|u\|_{L^q(\G_1^d)}^q\lesssim_{d,q} \|u\|_{L^2(\G_1^d)}^2\|u'\|_{L^2(\G_1^d)}^{q-2}\qquad\forall u\in H^1(\G_1^d)\,.
		\end{equation}
		As is well--known (see e.g. \cite[Theorem 2.3]{ADST}), \eqref{eq:GN2d} can be easily deduced by the two--dimensional Sobolev inequality
		\begin{equation}
		\label{eq:S2d}
		\|u\|_{L^2(\G_1^d)}\lesssim_d \|u'\|_{L^1(\G_1^d)}\qquad\forall u\in W^{1,1}(\G_1^d)\,.
		\end{equation}
		To prove \eqref{eq:S2d}, it is enough to think of $\G_1^d$ as 
		\[
		\G_1^d=\bigcup_{\substack{i,j\in\left\{1,\dots,d\right\}\\i<j}}\bigcup_{k\in\Z^{d-2}}\left(\G_1^d\cap P_{ij}^k\right)
		\]
		where, for every $i,j=1,\dots,d$ and $k\in\Z^{d-2}$, 
		\[
		P_{ij}^k=\left\{(x_1,\dots,x_d)\in\R^d\,:\,(x_1,\dots,x_{i-1},x_{i+1},\dots,x_{j-1},x_{j+1},\dots,x_d)= k\right\}.
		\]
		Clearly, $P_{ij}^k$ is a plane parallel to the coordinate planes of $\R^d$, $\G_1^d\cap P_{ij}^k$ is a two--dimensional grid contained in $P_{ij}^k$ and each edge of $\G_1^d$ belongs to $d-1$ of such two--dimensional grids. 
		
		If $u\in W^{1,1}(\G_1^d)$, its restriction $u_{\mid \G_1^d\cap P_{ij}^k}$ to $\G_1^d\cap P_{ij}^k$ is in $W^{1,1}(\G_1^d\cap P_{ij}^k)$, so that by \cite[Theorem 2.2]{ADST} it follows
		\[
		\|u\|_{L^2\left(\G_1^d\cap P_{ij}^k\right)}^2\leq \f14\|u'\|_{L^1\left(\G_1^d\cap P_{ij}^k\right)}^2
		\]
		and, summing over $i,j\in\left\{1,\dots,d\right\},i<j$ and $k\in\Z^{d-2}$,
		\[
		\begin{split}
		\|u\|_{L^2(\G_1^d)}^2=& \f1{d-1}\sum_{\substack{i,j\in\left\{1,\dots,d\right\}\\i<j}}\sum_{k\in\Z^{d-2}}\|u\|_{L^2\left(\G_1^d\cap P_{ij}^k\right)}^2\leq \f1{4(d-1)}\sum_{\substack{i,j\in\left\{1,\dots,d\right\}\\i<j}}\sum_{k\in\Z^{d-2}}\|u'\|_{L^1\left(\G_1^d\cap P_{ij}^k\right)}^2\\
		\leq&\f1{4(d-1)}\left(\sum_{\substack{i,j\in\left\{1,\dots,d\right\}\\i<j}}\sum_{k\in\Z^{d-2}}\|u'\|_{L^1\left(\G_1^d\cap P_{ij}^k\right)}\right)^2=\f{d-1}4\|u'\|_{L^1(\G_1^d)}^2\,,
		\end{split}
		\]
		that is \eqref{eq:S2d}.
		
		\smallskip
		{\em Case 2: $\alpha\in(0,2)$}. Given $q>2^*$, we prove \eqref{eq:GNint}  with $\varepsilon=1$ by interpolation between \eqref{eq:GN2d} and the $d$--dimensional Gagliardo--Nirenberg inequality \eqref{eq:dgnGe} at power $2^*$
		\begin{equation}
		\label{eq:GNd2*}
		\|u\|_{L^{2^*}(\G_1^d)}^{2^*}\leq K_{2^*,\G_1^d}\|u'\|_{L^2(\G_1^d)}^{2^*}\qquad\forall u\in\ H^1(\G_1^d)\,.
		\end{equation}
		Set $r:=\f2\alpha q+2^*\left(1-\f2\alpha\right)$, so that,  since $\alpha\in(0,2)$, we have $2^*<q<r$ and 
		\[
		\|u\|_{L^q(\G_1^d)}^q\leq \|u\|_{L^{2^*}(\G_1^d)}^{2^*\left(1-\f\alpha2\right)}\|u\|_{L^r(\G_1^d)}^{r\f\alpha2}\,,
		\]
		which coupled with \eqref{eq:GNd2*} and \eqref{eq:GN2d} for the $L^r$ norm entails
		\[
		\|u\|_{L^q(\G_1^d)}^q\lesssim_{d,q,\alpha}\|u'\|_{L^2(\G_1^d)}^{2^*\left(1-\f\alpha 2\right)}\|u\|_{L^2(\G_1^d)}^{\alpha}\|u'\|_{L^2(\G_1^d)}^{\f\alpha 2(r-2)}=\|u\|_{L^2(\G_1^d)}^\alpha\|u'\|_{L^2(\G_1^d)}^{q-\alpha}
		\]
		and completes the proof. 
	\end{proof}

\section{General restriction and extension estimates}
\label{sec:prel}
This section collects results frequently used in the following to compare norms of functions on $\R^d$ with those on grids. We start by recalling a straightforward relation between the norms of a given function in $\R^d$ and the ones of its restriction to grids with vanishing edgelength.

\begin{lemma}
	\label{lem:restr}
	Let $u\in C^1(\R^d)\cap H^2(\R^d)\cap L^\infty(\R^d)$. For every $\varepsilon>0$, let $u_\varepsilon:\G_\varepsilon^d\to\R$ be the restriction of $u$ to $\G_\varepsilon^d$. Then there exists $C>0$, depending on $u$ but not on $\varepsilon$, such that, as $\varepsilon\to0$, it holds
	\begin{align}
		&\left|\f{\varepsilon^{d-1}}d\|u_\varepsilon\|_{L^q(\G_\varepsilon^d)}^q-\|u\|_{L^q(\R^d)}^q\right|\leq C\varepsilon\,,\qquad \forall q\geq2\,,\label{eq:uRdG}\\
		&\left|\varepsilon^{d-1}\|u_\varepsilon'\|_{L^2(\G_\varepsilon^d)}^2-\|\nabla u\|_{L^2(\R^d)}^2\right|\leq C\varepsilon\,.\label{eq:u'RdG}
	\end{align}
\end{lemma}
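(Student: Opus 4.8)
The plan is to split each grid integral according to the direction of the edges and to recognize every resulting sum as a Riemann sum, in the remaining $d-1$ variables, of the profile obtained by integrating along the edge direction. Every edge of $\G_\varepsilon^d$ is parallel to exactly one coordinate axis, so I would first write $\|u_\varepsilon\|_{L^q(\G_\varepsilon^d)}^q=\sum_{i=1}^d I_i^{(q)}$, where $I_i^{(q)}$ collects the contributions of the edges parallel to the $i$-th axis. Fixing the $d-1$ transverse integer coordinates and summing over the edges lying on a single line parallel to $e_i$ reconstructs the full line integral, so that $I_i^{(q)}=\sum_{k'\in\Z^{d-1}}g_i(\varepsilon k')$ with $g_i(x'):=\int_\R|u|^q\,dx_i$ ($x_i$ integrated out, $x'$ the remaining coordinates). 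The same splitting applies to the derivative: $\|u_\varepsilon'\|_{L^2(\G_\varepsilon^d)}^2=\sum_{i=1}^d J_i$, and since along an edge parallel to $e_i$ the tangential derivative of $u_\varepsilon$ is exactly $\partial_{x_i}u$, we have $J_i=\sum_{k'\in\Z^{d-1}}h_i(\varepsilon k')$ with $h_i(x'):=\int_\R|\partial_{x_i}u|^2\,dx_i$.

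Next I would invoke the quantitative Riemann sum estimate: for $f\in W^{1,1}(\R^{d-1})$ one has $\bigl|\varepsilon^{d-1}\sum_{k'\in\Z^{d-1}}f(\varepsilon k')-\int_{\R^{d-1}}f\bigr|\leq\varepsilon\sum_{j}\|\partial_{x_j}f\|_{L^1(\R^{d-1})}$, which follows by telescoping one variable at a time and bounding each one-dimensional endpoint error by the $L^1$ norm of the corresponding partial derivative. Applying this to $f=g_i$ gives $\varepsilon^{d-1}I_i^{(q)}=\int_{\R^{d-1}}g_i+O(\varepsilon)=\|u\|_{L^q(\R^d)}^q+O(\varepsilon)$ for each $i$; summing over the $d$ directions yields $\varepsilon^{d-1}\|u_\varepsilon\|_{L^q(\G_\varepsilon^d)}^q=d\,\|u\|_{L^q(\R^d)}^q+O(\varepsilon)$, which after division by $d$ is precisely \eqref{eq:uRdG}. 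The factor $1/d$ is exactly the multiplicity coming from the $d$ distinct edge directions. Applying the same estimate to $f=h_i$ gives $\varepsilon^{d-1}J_i=\int_{\R^d}|\partial_{x_i}u|^2+O(\varepsilon)$, and summing over $i$ now reconstructs $\sum_i\int_{\R^d}|\partial_{x_i}u|^2=\|\nabla u\|_{L^2(\R^d)}^2$ with no extra factor, since here each direction contributes a distinct component of the gradient; this is \eqref{eq:u'RdG}.

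It then remains to check that $g_i,h_i\in W^{1,1}(\R^{d-1})$ with norms independent of $\varepsilon$, and this is where the three hypotheses enter. For $g_i$ one computes $\partial_{x_j}g_i=q\int_\R|u|^{q-2}u\,\partial_{x_j}u\,dx_i$; bounding $|u|^{q-2}\leq\|u\|_\infty^{q-2}$ (here $u\in L^\infty$ is used to absorb the superquadratic factor for arbitrary $q\geq2$) and then using $\int_{\R^d}|u|\,|\partial_{x_j}u|\,dx<\infty$ by Cauchy--Schwarz with $u,\nabla u\in L^2$, one sees $\|\nabla g_i\|_{L^1}$ is controlled. For $h_i$ one has $\partial_{x_j}h_i=2\int_\R\partial_{x_i}u\,\partial^2_{x_ix_j}u\,dx_i$, which is finite in $L^1$ because $\nabla u\in L^2$ and the second derivatives of $u$ are in $L^2$ thanks to $u\in H^2$. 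The constant $C$ in the statement is then the sum over directions of these seminorms, manifestly independent of $\varepsilon$; integrability of $g_i,h_i$ themselves is immediate since their $\R^{d-1}$-integrals equal $\|u\|_{L^q(\R^d)}^q$ and $\int_{\R^d}|\partial_{x_i}u|^2$ respectively.

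I expect the only genuinely delicate point to be the passage from the one-dimensional Riemann sum bound to the $(d-1)$-dimensional one, together with the verification that the partially integrated profiles $g_i,h_i$ are admissible test functions. The regularity assumptions are used precisely for this: $C^1$ to give pointwise meaning to the restriction and its tangential derivative along edges, $H^2$ for the $L^1$ control of $\nabla h_i$, and $L^\infty$ for the $L^1$ control of $\nabla g_i$ uniformly in $q$. Everything else reduces to routine telescoping and a finite summation over the $d$ edge directions.
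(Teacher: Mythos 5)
Your direction-by-direction splitting of the grid norms, the identification $I_i^{(q)}=\sum_{k'\in\Z^{d-1}}g_i(\varepsilon k')$ with $g_i(x')=\int_\R|u|^q\,dx_i$, and the bookkeeping explaining the factor $1/d$ (one full copy of $\|u\|_{L^q(\R^d)}^q$ per edge direction, versus one distinct gradient component per direction) are all correct and match the paper's decomposition into the lines $X_{\varepsilon,i}^j$. The gap is the key estimate you invoke. The inequality
\begin{equation*}
\Bigl|\varepsilon^{d-1}\sum_{k'\in\Z^{d-1}}f(\varepsilon k')-\int_{\R^{d-1}}f\,dx'\Bigr|\;\leq\;\varepsilon\sum_{j=1}^{d-1}\bigl\|\partial_{x_j}f\bigr\|_{L^1(\R^{d-1})}
\end{equation*}
is \emph{false} whenever $d-1\geq2$, i.e.\ for every $d\geq3$. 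Counterexample (the estimate is scale invariant, so take $\varepsilon=1$): let $\phi\in C_c^\infty(B_{1/4})$ with $\phi(0)=1$ and $f(x')=\sum_{|k'|\leq N}\phi\bigl((x'-k')/\delta\bigr)$; then the lattice sum is of order $N^{d-1}$, while $\int f=O(N^{d-1}\delta^{d-1})$ and $\|\nabla f\|_{L^1}=O(N^{d-1}\delta^{d-2})$, so the bound fails as $\delta\to0$. The obstruction is structural: for $n\geq2$, $W^{1,1}(\R^n)$ does not control point values (points have zero $W^{1,1}$--capacity), so no first--order norm can control lattice sums. Concretely, your telescoping breaks at the second step: after replacing the sum over $k_1$ by an integral, the error is $\varepsilon$ times a \emph{lattice sum in the still--frozen variables} of $\int_\R|\partial_{x_1}f|\,dx_1$, not $\varepsilon\|\partial_{x_1}f\|_{L^1(\R^{d-1})}$; that inner lattice sum is itself a Riemann sum needing further control. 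Iterating correctly gives
\begin{equation*}
\Bigl|\varepsilon^{d-1}\sum_{k'}f(\varepsilon k')-\int_{\R^{d-1}}f\Bigr|\;\leq\;\sum_{\emptyset\neq S\subseteq\{1,\dots,d-1\}}\varepsilon^{|S|}\Bigl\|\prod_{j\in S}\partial_{x_j}f\Bigr\|_{L^1(\R^{d-1})},
\end{equation*}
which requires \emph{all mixed partials} of $f$ up to order $d-1$.

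This changes what you would have to verify about $g_i$ and $h_i$, and the hypotheses no longer suffice. For $h_i(x')=\int_\R|\partial_{x_i}u|^2dx_i$, already the first mixed term $\partial_{x_1}\partial_{x_2}h_i=2\int_\R\bigl(\partial_{x_1}\partial_{x_i}u\,\partial_{x_2}\partial_{x_i}u+\partial_{x_i}u\,\partial_{x_1}\partial_{x_2}\partial_{x_i}u\bigr)dx_i$ involves third derivatives of $u$, which $u\in C^1(\R^d)\cap H^2(\R^d)\cap L^\infty(\R^d)$ does not provide; for $g_i$ the top--order mixed partial involves derivatives of $u$ up to order $d-1$. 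So as written your argument proves the lemma only for $d=2$ (where the transverse dimension is $1$ and the one--dimensional Riemann--sum bound you use is genuinely true), and it cannot be repaired for $d\geq3$ by merely strengthening the $W^{1,1}$ verification within the stated hypotheses.

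For comparison, the paper does not pass through lattice sums of the profiles at all: for each direction $j$ it introduces the function $w_{\varepsilon,j}$ on $\R^d$ obtained by freezing the $d-1$ transverse variables at the lattice corner of each $\varepsilon$--box, so that $\|w_{\varepsilon,j}\|_{L^q(\R^d)}^q=\varepsilon^{d-1}\|u_\varepsilon\|_{L^q(\bigcup_iX_{\varepsilon,i}^j)}^q$ holds \emph{exactly}, and then compares $u$ with $w_{\varepsilon,j}$ directly in $L^q(\R^d)$, estimating the error by $\varepsilon\int_{\R^d}|\nabla(|u|^q)|\,dx$ (and by $\varepsilon\|\nabla u\|_{L^2}\|D^2u\|_{L^2}$ for the gradient term). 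In other words, the error is expressed as an integral over $\R^d$ of one derivative of the integrand, rather than delegated to a Riemann--sum lemma in $\R^{d-1}$; this is where the two proofs genuinely diverge, and it is exactly the step your version replaces by a false statement.
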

\begin{proof}
	Here it is convenient to think of $\G_\varepsilon^d$ as
	\[
	\G_\varepsilon^d=\bigcup_{i\in\Z^{d-1}}\bigcup_{j=1}^d X_{\varepsilon,i}^j
	\]
	where, for every $i\in \Z^{d-1}$ and $j=1,\dots,d$,
	\[
	X_{\varepsilon,i}^j=\left\{(\varepsilon i_1,\dots, \varepsilon i_{j-1},s,\varepsilon i_j,\dots,\varepsilon i_{d-1})\,:\,s\in\R\right\}.
	\]
	Let us split the proof in two parts.
	
	\smallskip
	{\em Part 1: proof of \eqref{eq:uRdG}.} For every fixed $j\in\left\{1,\dots,d\right\}$, consider the partition of $\R^d$ given by the sets, for every $i\in\Z^{d-1}$,
	\[
	A_{\varepsilon,i}^j:=[\varepsilon i_1,\varepsilon (i_1+1))\times\dots\times[\varepsilon i_{j-1},\varepsilon(i_{j-1}+1))\times\R\times[\varepsilon i_{j},\varepsilon(i_j+1))\times\dots\times[\varepsilon i_{d-1},\varepsilon(i_{d-1}+1))
	\] 
	and define $w_{\varepsilon,j}:\R^d\to\R$ as
	\[
	w_{\varepsilon,j}(x_1,\dots,x_d):=u(\varepsilon i_1, \varepsilon i_{j-1},x_j,\varepsilon i_j,\dots,\varepsilon i_{d-1})\qquad\forall (x_1,\dots,x_d)\in A_{\varepsilon,i}^j,\,\text{ for some }i\in\Z\,.
	\]
	By definition, for every $q\geq 2$,
	\begin{equation}
		\label{eq:Lqwj}
		\|w_{\varepsilon,j}\|_{L^q(\R^d)}^q=\varepsilon^{d-1}\sum_{i\in\Z^{d-1}}\int_\R|u(\varepsilon i_1, \varepsilon i_{j-1},x_j,\varepsilon i_j,\dots,\varepsilon i_{d-1})|^q\,dx_j=\varepsilon^{d-1}\|u_\varepsilon\|_{L^q\left(\bigcup_{i\in\Z^{d-1}}X_{\varepsilon,i}^j\right)}^q\,.
	\end{equation}
	Moreover, since $u\in H^1(\R^d)$, if $\varepsilon$ is small enough we have
	\[
	\begin{split}
	\|u-w_{\varepsilon,j}\|_{L^2(\R^d)}^2\leq\sum_{i\in\Z^{d-1}}&\int_{\varepsilon i_1}^{\varepsilon(i_1+1)}\dots\int_{\varepsilon i_{d-1}}^{\varepsilon(i_{d-1}+1)}\int_\R\left|u(x_1,\dots,x_d)\right.\\
	&\left.-u(\varepsilon i_1, \varepsilon i_{j-1},x_j,\varepsilon i_j,\dots,\varepsilon i_{d-1})\right|^2dx_jdx_d\dots dx_{j+1}dx_{j-1}\dots dx_{1}\\
	\leq \varepsilon^2\sum_{i\in\Z^{d-1}}&\int_{A_{\varepsilon,i}^j}|\nabla u|^2(x_1,\dots,x_d)\,dx_1\dots dx_d=\varepsilon^2\|\nabla u\|_{L^2(\R^d)}^2\,,
	\end{split}
	\]	
	which coupled with \eqref{eq:Lqwj} yields
	\[
	\left|\|u\|_{L^2(\R^d)}^2-\varepsilon^{d-1}\|u_\varepsilon\|_{L^2\left(\bigcup_{i\in\Z^{d-1}}X_{\varepsilon,i}^j\right)}^2\right|\leq 2\varepsilon\|u\|_{L^2(\R^d)}\|\nabla u\|_{L^2(\R^d)}+\varepsilon^2\|\nabla u\|_{L^2(\R^d)}^2\,.
	\]
	Summing over $j=1,\dots, d$ proves \eqref{eq:uRdG} with $q=2$.
	
	If $q>2$, arguing similarly we obtain for every $j$
	\begin{equation}
	\label{stimachiave}
	\begin{split}
	\left|\|u\|_{L^q(\R^d)}^q-\|w_{\varepsilon,j}\|_{L^q(\R^d)}^q\right|\leq& \varepsilon\int_{\R^d}|\nabla (|u|^{q})|(x_1,\dots,x_d)dx_1\dots dx_d\\
	\lesssim_q&\,\varepsilon\int_{\R^d}|u|^{q-1}|\nabla u|(x_1,\dots,x_d)dx_1\dots dx_d\lesssim_q \varepsilon\|u\|_{L^{2(q-1)}(\R^d)}^{q-1}\|\nabla u\|_{L^2(\R^d)}\,,
	\end{split}
	\end{equation}
	so that, summing over $j$ and recalling \eqref{eq:Lqwj},
	\[
	\left|\|u\|_{L^q(\R^d)}^q-\f{\varepsilon^{d-1}}d\|u_\varepsilon\|_{L^q(\G_\varepsilon^d)}^q\right|\lesssim_q \varepsilon\|u\|_{L^{2(q-1)}(\R^d)}^{q-1}\|\nabla u\|_{L^2(\R^d)}\,,
	\]
	which implies \eqref{eq:uRdG} since $u\in H^1(\R^d)\cap L^\infty(\R^d)$ by assumption.
	
	\smallskip
	{\em Part 2: proof of \eqref{eq:u'RdG}.} For every $j=1,\dots,d$, let $v_{\varepsilon,j}:\R^d\to\R$ be
	\[
	v_{\varepsilon,j}(x_1,\dots,x_d):=\partial_{x_j}u(\varepsilon i_1, \varepsilon i_{j-1},x_j,\varepsilon i_j,\dots,\varepsilon i_{d-1})\qquad\forall (x_1,\dots,x_d)\in A_{\varepsilon,i}^j,\,\text{ for some }i\in\Z\,,
	\]
	where $A_{\varepsilon,i}^j$ are as in Part 1 above. Then
	\[
	\|v_{\varepsilon,j}\|_{L^2(\R^d)}^2=\varepsilon^{d-1}\sum_{i\in\Z^d}\int_{\R}|\partial_{x_j}u(\varepsilon i_1, \varepsilon i_{j-1},x_j,\varepsilon i_j,\dots,\varepsilon i_{d-1})|^2\,dx_j=\varepsilon^{d-1}\|u_\varepsilon'\|_{L^2\left(\bigcup_{i\in\Z^{d-1}}X_{\varepsilon,i}^j\right)}^2
	\]
	and, since $u\in C^1(\R^d)\cap H^2(\R^d)$, arguing as before
	\[
	\begin{split}
	\left|\|\partial_{x_j}u\|_{L^2(\R^d)}^2-\|v_{\varepsilon,j}\|_{L^2(\R^d)}^2\right|\leq&\, \varepsilon\sum_{i\in\Z^{d-1}}\int_{A_{\varepsilon,i}^j}|\nabla ((\partial_{x_j}u)^2)|(x_1,\dots,x_d)\,dx_1\dots dx_d\\
	\lesssim& \,\varepsilon\|\nabla u\|_{L^2(\R^d)}\|D^2u\|_{L^2(\R^d)}\,,
	\end{split}
	\]
	so that summing over $j$ gives \eqref{eq:u'RdG}.
\end{proof}

We then turn our attention to the relation between a given function $u\in H^1(\G_\varepsilon^d)$, its piecewise--affine extension $\mathcal{A}u$ as defined in \eqref{eq:Ad} and the restriction of $\mathcal{A}u$ to $\G_\varepsilon^d$, that from now on will be denoted by $\widetilde{u}$. By definition, on each edge of $\G_\varepsilon^d$ $\widetilde{u}$ is the linear interpolation of the values of $u$ at the corresponding vertices. Hence, identifying any edge $e\in\mathbb{E}_{\G_\varepsilon^d}$, $e=(\v_1,\v_2)$,  with $[0,\varepsilon]$, and denoting by $\widetilde{u}_e$ the restriction of $\widetilde{u}$ to $e$, we have
\begin{equation}
\label{eq:util}
\widetilde{u}_e(x)=\f{u(\v_2)-u(\v_1)}\varepsilon x+u(\v_1), \quad \forall x\in[0,\varepsilon]\,.
\end{equation}
It is readily seen that, if $u\in H^1(\G_\varepsilon^d)$, then $\widetilde{u}\in H^1(\G_\varepsilon^d)$ too and, by Jensen inequality,
\begin{equation}
\label{eq:util'}
\|\widetilde{u}'\|_{L^2(\G_\varepsilon^d)}\leq\|u'\|_{L^2(\G_\varepsilon^d)}\,.
\end{equation}
We now want to estimate the distance between the $L^q$ norms of $u$ and $\widetilde{u}$. The next lemma does the job in the case $q=2$.
\begin{lemma}
	\label{lem:uutL2}
	For every $u\in H^1(\G_\varepsilon^d)$ it holds, as $\varepsilon\to0$,
	\begin{equation}
		\label{eq:uutL2}
		\left|\|u\|_{L^2(\G_\varepsilon^d)}^2-\|\widetilde{u}\|_{L^2(\G_\varepsilon^d)}^2\right|\leq 3\varepsilon\|u\|_{H^1(\G_\varepsilon^d)}^2\,.
	\end{equation}
\end{lemma}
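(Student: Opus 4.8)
The plan is to localise the estimate to the individual edges of $\G_\varepsilon^d$ and then recombine by Cauchy--Schwarz. Fix an edge $e=(\v_1,\v_2)$ and identify it with $[0,\varepsilon]$ as in \eqref{eq:util}, writing $u_e$ and $\widetilde{u}_e$ for the restrictions of $u$ and $\widetilde{u}$ to $e$. Since $\widetilde{u}$ is the piecewise--affine interpolant, $\widetilde{u}_e$ agrees with $u_e$ at the two endpoints, so the difference $w_e:=u_e-\widetilde{u}_e$ satisfies $w_e(0)=w_e(\varepsilon)=0$. The key structural remark is that $\widetilde{u}_e'$ is constant and equal to $\f{u(\v_2)-u(\v_1)}\varepsilon=\f1\varepsilon\int_0^\varepsilon u_e'$, i.e. to the mean of $u_e'$ on $e$. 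Hence $w_e'=u_e'-\widetilde{u}_e'$ is $u_e'$ minus its average, which gives $\|w_e'\|_{L^2(e)}\le\|u_e'\|_{L^2(e)}$ (consistently with \eqref{eq:util'}).

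Next I would estimate $\|w_e\|_{L^2(e)}$ by a Poincar\'e--type inequality exploiting $w_e(0)=0$: from $w_e(x)=\int_0^x w_e'$ and Cauchy--Schwarz one gets $\|w_e\|_{L^2(e)}\le\f\varepsilon{\sqrt2}\|w_e'\|_{L^2(e)}\le\f\varepsilon{\sqrt2}\|u_e'\|_{L^2(e)}$. I would then write the difference of squares as $\int_e u_e^2-\int_e\widetilde{u}_e^2=\int_e w_e(u_e+\widetilde{u}_e)$ and bound it by $\|w_e\|_{L^2(e)}\big(\|u_e\|_{L^2(e)}+\|\widetilde{u}_e\|_{L^2(e)}\big)$; using $\|\widetilde{u}_e\|_{L^2(e)}\le\|u_e\|_{L^2(e)}+\|w_e\|_{L^2(e)}$ this yields, on each edge,
\[
\left|\|u_e\|_{L^2(e)}^2-\|\widetilde{u}_e\|_{L^2(e)}^2\right|\le\sqrt2\,\varepsilon\,\|u_e'\|_{L^2(e)}\|u_e\|_{L^2(e)}+\f{\varepsilon^2}2\|u_e'\|_{L^2(e)}^2\,.
\]

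Finally I would sum over all edges $e\in\mathbb{E}_{\G_\varepsilon^d}$. The second term sums to $\f{\varepsilon^2}2\|u'\|_{L^2(\G_\varepsilon^d)}^2$, while for the first a Cauchy--Schwarz over the edges gives $\sum_e\|u_e'\|_{L^2(e)}\|u_e\|_{L^2(e)}\le\|u'\|_{L^2(\G_\varepsilon^d)}\|u\|_{L^2(\G_\varepsilon^d)}$. Bounding $\|u'\|_{L^2}\|u\|_{L^2}\le\f12\|u\|_{H^1(\G_\varepsilon^d)}^2$ and $\|u'\|_{L^2}^2\le\|u\|_{H^1(\G_\varepsilon^d)}^2$, one reaches
\[
\left|\|u\|_{L^2(\G_\varepsilon^d)}^2-\|\widetilde{u}\|_{L^2(\G_\varepsilon^d)}^2\right|\le\left(\f{\sqrt2}2\varepsilon+\f{\varepsilon^2}2\right)\|u\|_{H^1(\G_\varepsilon^d)}^2\,,
\]
which lies below $3\varepsilon\|u\|_{H^1(\G_\varepsilon^d)}^2$ once $\varepsilon\le1$, as required.

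I do not anticipate a genuine obstacle: the computation is elementary once the proof is reduced to a single edge. The only point requiring a little care is pairing the Poincar\'e constant on $[0,\varepsilon]$ with the observation that $\widetilde{u}_e'$ equals the mean of $u_e'$, since these two facts are exactly what produce the extra power of $\varepsilon$ and the clean control by $\|u'\|_{L^2(\G_\varepsilon^d)}$; the final constant $3$ is then a comfortable over-estimate.
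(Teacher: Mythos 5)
Your proof is correct, and it takes a genuinely different route from the paper's. The paper compares both $\|u\|_{L^2(e)}^2$ and $\|\widetilde u\|_{L^2(e)}^2$ to the common reference value $\varepsilon|u(\v_1)|^2$ via the fundamental theorem of calculus; this forces it to control $\|\widetilde u\|_{L^2(\G_\varepsilon^d)}$ in terms of $u$ through an explicit computation of $\int_e|\widetilde u_e|^2$ from \eqref{eq:util}, a Cauchy--Schwarz splitting with an auxiliary parameter $\delta$, and the vertex-sum bound \eqref{sumV2}. You instead work directly with the error $w_e=u_e-\widetilde u_e$, exploiting that $w_e$ vanishes at both endpoints and that $\widetilde u_e'$ is the mean of $u_e'$ on $e$ (so $\|w_e'\|_{L^2(e)}\le\|u_e'\|_{L^2(e)}$, since subtracting the mean is an orthogonal projection), and then combine the one--dimensional Poincar\'e inequality $\|w_e\|_{L^2(e)}\le\tfrac{\varepsilon}{\sqrt2}\|w_e'\|_{L^2(e)}$ with the factorization $u_e^2-\widetilde u_e^2=w_e(u_e+\widetilde u_e)$. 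All steps check out (the per-edge estimate, the Cauchy--Schwarz resummation over edges, and the final absorption into $\|u\|_{H^1}^2$), and your argument is shorter, avoids the vertex-value bookkeeping entirely, and yields the sharper constant $\tfrac{\sqrt2+1}{2}$ in place of $3$ once $\varepsilon\le1$, which is all that is needed since the statement is asymptotic as $\varepsilon\to0$. The one thing the paper's heavier route buys is its intermediate estimates --- notably \eqref{eq:1L2} and \eqref{sumV2}, which bound $\varepsilon\sum_{e}|u(\v_1)|^2$ by $H^1$ quantities --- and these are recycled later in the paper (in the proofs of Lemma \ref{lem:uutLq} and Lemma \ref{lem:normA}); with your cleaner argument those auxiliary bounds would have to be established separately.
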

\begin{proof}
	Throughout the proof, we use the following convention: for every edge $e\in\mathbb{E}_{\G_\varepsilon^d}$ identified by an interval of length $\varepsilon$ in the $j$-th coordinate direction of $\R^d$, we let $\v_1$ be its vertex with smallest $j$-th coordinate and $\v_2$ be the one with largest $j$-th coordinate. 
	
	Note first that, for every edge $e$ of $\G_\varepsilon^d$, we have both
	\[
	\|u\|_{L^2(e)}^2-\varepsilon|u(\v_1)|^2=\int_e (u^2(x)-u^2(\v_1))\,dx=\int_e\int_0^x(u^2)'(y)\,dydx\leq 2 \varepsilon\|u\|_{L^2(e)}\|u'\|_{L^2(e)}
	\]
	and
	\[
	\|\widetilde{u}\|_{L^2(e)}^2-\varepsilon|u(\v_1)|^2=\int_e (\widetilde{u}^2(x)-\widetilde{u}^2(\v_1))\,dx\leq 2 \varepsilon\|\widetilde{u}\|_{L^2(e)}\|\widetilde{u}'\|_{L^2(e)}\,,
	\]
	since by definition $u(\v_1)=\widetilde{u}(\v_1)$ for every $\v_1\in\mathbb{V}_{\G_\varepsilon^d}$. Summing over $e\in\mathbb{E}_{\G_\varepsilon^d}$ and making use of Cauchy--Schwarz inequality and \eqref{eq:util'} then gives
	\begin{equation}
	\label{eq:1L2}
	\begin{split}
	\left|\|u\|_{L^2(\G_\varepsilon^d)}^2-\|\widetilde{u}\|_{L^2(\G_\varepsilon^d)}^2\right|\leq& \left|\|u\|_{L^2(\G_\varepsilon^d)}^2-\varepsilon\sum_{e\in\mathbb{E}_{\G_\varepsilon^d}}|u(\v_1)|^2\right|+ \left|\|\widetilde{u}\|_{L^2(\G_\varepsilon^d)}^2-\varepsilon\sum_{e\in\mathbb{E}_{\G_\varepsilon^d}}|u(\v_1)|^2\right|\\
	\leq& \varepsilon\sum_{e\in\mathbb{E}_{\G_\varepsilon^d}}\|u\|_{L^2(e)}^2+\|u'\|_{L^2(e)}^2+\|\widetilde{u}\|_{L^2(e)}^2+\|\widetilde{u}'\|_{L^2(e)}^2\\
	\leq& \varepsilon\left(\|u\|_{L^2(\G_\varepsilon^d)}^2+\|\widetilde{u}\|_{L^2(\G_\varepsilon^d)}^2+2\|u'\|_{L^2(\G_\varepsilon^d)}^2\right)\,.
	\end{split}
	\end{equation}
	Furthermore, recalling \eqref{eq:util}, for every $e\in\mathbb{E}_{\G_\varepsilon^d}$ we obtain
	\[
	\begin{split}
	\|\widetilde{u}\|_{L^2(e)}^2=&\,\int_0^\varepsilon\left|\f{u(\v_2)-u(\v_1)}\varepsilon x+u(\v_1)\right|^2\,dx\\
	=&\,\left(\f{u(\v_2)-u(\v_1)}\varepsilon\right)^2\f{\varepsilon^3}3+|u(\v_1)|^2\varepsilon+u(\v_1)\f{u(\v_2)-u(\v_1)}\varepsilon \varepsilon^2\,,
	\end{split}
	\]
	so that summing over all edges of $\G_\varepsilon^d$ yields
	\begin{equation}
	\label{eq1}
	\left|\|\widetilde{u}\|_{L^2(\G_\varepsilon^d)}^2-\varepsilon\sum_{e\in\mathbb{E}_{\G_\varepsilon^d}}|u(\v_1)|^2\right|\leq\varepsilon^2\sum_{e\in\mathbb{E}_{\G_\varepsilon^d}}|u(\v_1)|\left|\f{u(\v_2)-u(\v_1)}\varepsilon\right|+\f{\varepsilon^3}3\sum_{e\in\mathbb{E}_{\G_\varepsilon^d}}\left(\f{u(\v_2)-u(\v_1)}\varepsilon\right)^2.
	\end{equation}
	By Cauchy--Schwarz inequality, for any fixed $\delta\in(0,1/2)$
	\[
	|u(\v_1)|\left|\f{u(\v_2)-u(\v_1)}\varepsilon\right|\leq \f12\left(\f{|u(\v_1)|^2}{\varepsilon^{1-2\delta}}+\f{|u(\v_2)-u(\v_1)|^2}{\varepsilon^{1+2\delta}}\right),
	\]
	whereas by Jensen inequality
	\[
	\left(\f{u(\v_2)-u(\v_1)}\varepsilon\right)^2=\left(\f1{\varepsilon}\int_e u'\,dx\right)^2\leq\f1\varepsilon\|u'\|_{L^2(e)}^2\,.
	\]
	Plugging into \eqref{eq1} and making use of Jensen inequality again leads to
	\begin{equation}
	\label{est21}
	\begin{split}
	\left|\|\widetilde{u}\|_{L^2(\G_\varepsilon)}^2-\varepsilon\sum_{e\in\mathbb{E}_{\G_\varepsilon^d}}|u(\v_1)|^2\right|\leq &\,\f{\varepsilon^{1+2\delta}}2\sum_{e\in\mathbb{E}_{\G_\varepsilon^d}}|u(\v_1)|^2\\
	&+\f{\varepsilon^{1-2\delta}}2\sum_{e\in\mathbb{E}_{\G_\varepsilon^d}}|u(\v_2)-u(\v_1)|^2+\f{\varepsilon^2}3\sum_{e\in\mathbb{E}_{\G_\varepsilon^d}}\|u'\|_{L^2(e)}^2\\
	\leq&\,\f{\varepsilon^{1+2\delta}}2\sum_{e\in\mathbb{E}_{\G_\varepsilon^d}}|u(\v_1)|^2+\f{\varepsilon^{2-2\delta}}2\sum_{e\in\mathbb{E}_{\G_\varepsilon^d}}\|u'\|_{L^2(e)}^2+\f{\varepsilon^2}3\sum_{e\in\mathbb{E}_{\G_\varepsilon^d}}\|u'\|_{L^2(e)}^2\\
	=&\,\f{\varepsilon^{1+2\delta}}2\sum_{e\in\mathbb{E}_{\G_\varepsilon^d}}|u(\v_1)|^2+\varepsilon\|u'\|_{L^2(\G_\varepsilon^d)}^2\left(\f{\varepsilon^{1-2\delta}}2+\f\varepsilon3\right)\,.
	\end{split}
	\end{equation}
	Since, arguing as in \eqref{eq:1L2}, one has
	\begin{equation}
	\label{sumV2}
	\varepsilon\sum_{e\in\mathbb{E}_{\G_\varepsilon^d}}|u(\v_1)|^2\leq (1+\varepsilon)\|u\|_{L^2(\G_\varepsilon^d)}^2+\varepsilon\|u'\|_{L^2(\G_\varepsilon^d)}^2\,,
	\end{equation}
	combining with \eqref{est21} yields, for sufficiently small $\varepsilon$,
	\[
	\begin{split}
	\|\widetilde{u}\|_{L^2(\G_\varepsilon^d)}^2\leq& \varepsilon\sum_{e\in\mathbb{E}_{\G_\varepsilon^d}}|u(\v_1)|^2\left(1+\f{\varepsilon^{2\delta}}2\right)+\varepsilon\|u'\|_{L^2(\G_\varepsilon^d)}^2\left(\f{\varepsilon^{1-2\delta}}2+\f{\varepsilon}3\right)\leq
	2\left(\|u\|_{L^2(\G_\varepsilon^d)}^2+\varepsilon\|u'\|_{L^2(\G_\varepsilon^d)}^2\right)
	\end{split}
	\]
	and plugging into \eqref{eq:1L2} gives \eqref{eq:uutL2}.
\end{proof}
When $d\geq3$ and $2<q<2^*$, an estimate analogous to that of the previous lemma is harder to obtain and exploits the interpolating Gagliardo--Nirenberg inequalities of Proposition \ref{prop:GNint}.  

\begin{lemma}
	\label{lem:uutLq}
	For every $u\in H^1(\G_\varepsilon^d)$ we have that, as $\varepsilon\to0$,
	\begin{itemize}
		\item[(i)] if $d=2$ and $q>2$, or $d\geq3$ and $q\leq\f{2^*}2+1$, then
		\begin{equation}
		\label{eq:uutLq1}
		\left|\|u\|_{L^q(\G_\varepsilon^d)}^q-\|\widetilde{u}\|_{L^q(\G_\varepsilon^d)}^q\right|\leq C\left(
		\varepsilon^{\f{q-2}2(d-1)+1}\|u\|_{L^2(\G_\varepsilon^d)}^{\f d2+\f{2-d}2(q-1)}\|u'\|_{L^2(\G_\varepsilon^d)}^{\f{q-2}2 d +1}+\varepsilon^{\f12\left(\f{q-2}2 d +3\right)}\|u'\|_{L^2(\G_\varepsilon^d)}^q\right)
		\end{equation} 
		for a constant $C>0$ depending on $q$ and $d$ only;
		
		\smallskip
		\item[(ii)] if $d\geq 3$ and $q>\f{2^*}2+1$, then for every $\gamma\in(0,1]$
		\begin{equation}
		\label{eq:uutLq2}
		\left|\|u\|_{L^q(\G_\varepsilon^d)}^q-\|\widetilde{u}\|_{L^q(\G_\varepsilon^d)}^q\right|\leq C\left(\varepsilon^{\f q2+1-\gamma}\|u\|_{L^2(\G_\varepsilon^d)}^\gamma\|u'\|_{L^2(\G_\varepsilon^d)}^{ q-\gamma}+\varepsilon^{\f q2+1-\f\gamma2}\|u'\|_{L^2(\G_\varepsilon^d)}^{q}\right)
		\end{equation}
		for a constant $C>0$ depending on $q,d$ and $\gamma$ only.
	\end{itemize}
	
\end{lemma}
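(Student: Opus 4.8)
The plan is to compare $\|u\|_{L^q}^q$ with $\|\widetilde u\|_{L^q}^q$ through a pointwise inequality and then to absorb the resulting error into Gagliardo--Nirenberg terms. Writing $w:=u-\widetilde u$, the two structural facts I would exploit are that $w$ \emph{vanishes at every vertex} of $\G_\varepsilon^d$ (by \eqref{eq:util}, $\widetilde u$ interpolates $u$ at the endpoints of each edge), and that on each edge $e$ the derivative $\widetilde u'$ equals the mean of $u'$ over $e$, so that $w'=u'-\widetilde u'$ is mean--free on $e$ and $\|w'\|_{L^2(e)}\le\|u'\|_{L^2(e)}$ (the edgewise computation underlying \eqref{eq:util'}). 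Starting from the elementary bound $\big||u|^q-|\widetilde u|^q\big|\le q\,\max(|u|,|\widetilde u|)^{q-1}|w|$ together with $\max(|u|,|\widetilde u|)\le|u|+|w|$, integrating over $\G_\varepsilon^d$ gives
\[
\Big|\|u\|_{L^q(\G_\varepsilon^d)}^q-\|\widetilde u\|_{L^q(\G_\varepsilon^d)}^q\Big|\lesssim_{q}\int_{\G_\varepsilon^d}|u|^{q-1}|w|+\|w\|_{L^q(\G_\varepsilon^d)}^q,
\]
so the statement reduces to estimating these two contributions.

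For the \emph{error term} $\|w\|_{L^q}^q$ I would use only that $w$ vanishes at the endpoints of each edge. On $e\cong[0,\varepsilon]$ one has $\|w\|_{L^\infty(e)}\le\sqrt\varepsilon\,\|w'\|_{L^2(e)}\le\sqrt\varepsilon\,\|u'\|_{L^2(e)}$, hence $\|w\|_{L^q(e)}^q\le\varepsilon\,\|w\|_{L^\infty(e)}^q\le\varepsilon^{1+q/2}\|u'\|_{L^2(e)}^q$; summing over edges and using $\sum_e a_e^q\le\big(\sum_e a_e^2\big)^{q/2}$ for $q\ge2$ yields $\|w\|_{L^q(\G_\varepsilon^d)}^q\le\varepsilon^{1+q/2}\|u'\|_{L^2(\G_\varepsilon^d)}^q$. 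The same Poincaré argument in $L^2$ gives $\|w\|_{L^2(\G_\varepsilon^d)}\le\varepsilon\,\|u'\|_{L^2(\G_\varepsilon^d)}$, which I will need below. A comparison of exponents shows that $\varepsilon^{1+q/2}\|u'\|^q$ is dominated by the second summand on the right--hand side of \eqref{eq:uutLq1} (resp. \eqref{eq:uutLq2}): in case (i) this amounts to $1+\tfrac q2\ge\tfrac12(\tfrac{q-2}2 d+3)$, i.e. $2(q-1)\ge(q-2)d$, which holds precisely because $q\le\tfrac{2^*}2+1$ (with equality at the endpoint), while in case (ii) it is the trivial $1+\tfrac q2\ge \tfrac q2+1-\tfrac\gamma2$.

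For the \emph{main term} I would apply Cauchy--Schwarz and then a Gagliardo--Nirenberg inequality:
\[
\int_{\G_\varepsilon^d}|u|^{q-1}|w|\le\|u\|_{L^{2(q-1)}(\G_\varepsilon^d)}^{q-1}\,\|w\|_{L^2(\G_\varepsilon^d)}\le\varepsilon\,\|u\|_{L^{2(q-1)}(\G_\varepsilon^d)}^{q-1}\,\|u'\|_{L^2(\G_\varepsilon^d)}.
\]
In case (i) one has $2(q-1)\le2^*$, so the $d$--dimensional inequality \eqref{eq:dgnGe} at exponent $2(q-1)$ applies; inserting the $\varepsilon$--scaling of Lemma \ref{lem:GNeps} and simplifying the powers of $\varepsilon$, $\|u\|_{L^2}$ and $\|u'\|_{L^2}$ reproduces exactly the first summand of \eqref{eq:uutLq1}. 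In case (ii) one has $2(q-1)>2^*$, the $d$--dimensional inequality is no longer available, and this is the crux: I would instead invoke the interpolating inequality of Proposition \ref{prop:GNint} at exponent $2(q-1)$ with the choice $\alpha=2\gamma\in(0,2]$. This gives $\|u\|_{L^{2(q-1)}}^{q-1}\lesssim\varepsilon^{(q-2\gamma)/2}\|u\|_{L^2}^{\gamma}\|u'\|_{L^2}^{q-1-\gamma}$, and multiplying by $\varepsilon\|u'\|_{L^2}$ produces precisely the first summand of \eqref{eq:uutLq2}.

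The main obstacle is exactly case (ii): once the exponent $2(q-1)$ exceeds $2^*$ the purely $d$--dimensional Gagliardo--Nirenberg estimate breaks down and one must use the finer multi--scale inequality \eqref{eq:GNint}; the only delicate point there is the exponent bookkeeping that selects $\alpha=2\gamma$ so that the resulting powers of $\|u\|_{L^2}$, $\|u'\|_{L^2}$ and $\varepsilon$ match the target \eqref{eq:uutLq2}. All remaining steps --- the Poincaré and $L^\infty$ bounds for $w$ on a single edge and the summation over edges --- are elementary and uniform in $\varepsilon$.
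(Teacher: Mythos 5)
Your proposal is correct, and every step checks out: the pointwise bound reduces the difference to $\int|u|^{q-1}|w|+\|w\|_{L^q}^q$ with $w=u-\widetilde u$; the orthogonality $\|w'\|_{L^2(e)}\le\|u'\|_{L^2(e)}$ (since $\widetilde u'$ is the mean of $u'$ on $e$) and the vanishing of $w$ at vertices give the Poincar\'e bounds $\|w\|_{L^2}\le\varepsilon\|u'\|_{L^2}$ and $\|w\|_{L^q}^q\le\varepsilon^{1+q/2}\|u'\|_{L^2}^q$; your exponent bookkeeping (in particular that $1+\tfrac q2\ge\tfrac12(\tfrac{q-2}2d+3)$ is equivalent to $q\le\tfrac{2^*}2+1$, and that $\alpha=2\gamma$ in Proposition \ref{prop:GNint} reproduces the first summand of \eqref{eq:uutLq2}) is accurate.

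The route differs from the paper's in the decomposition, though the analytic core is the same. The paper does not work with $w$ at all: it compares \emph{both} $\|u\|_{L^q}^q$ and $\|\widetilde u\|_{L^q}^q$ to the vertex Riemann sum $\varepsilon\sum_{e}|u(\v_1)|^q$, estimating each difference by a fundamental-theorem-of-calculus/H\"older argument that produces $q\varepsilon\|\cdot\|_{L^{2(q-1)}}^{q-1}\|\cdot'\|_{L^2}$ for the respective function, and then applies \eqref{eq:dgnGe} with Lemma \ref{lem:GNeps} (case (i)) or Proposition \ref{prop:GNint} (case (ii)) --- exactly the same case split and the same Gagliardo--Nirenberg machinery you use. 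The price of the paper's route is that the $\widetilde u$ side must be converted back into norms of $u$: this requires $\|\widetilde u'\|_{L^2}\le\|u'\|_{L^2}$, Lemma \ref{lem:uutL2} for $\|\widetilde u\|_{L^2}$, and the subadditivity of $s\mapsto s^{\frac{d+(2-d)(q-1)}4}$, which is where the restriction $q\le\tfrac{2^*}2+1$ enters a second time. Your decomposition avoids all of this: the error term $\|w\|_{L^q}^q$ is handled by pure one-dimensional Poincar\'e estimates with no Gagliardo--Nirenberg inequality applied to $\widetilde u$, so only the main term needs the grid inequalities, and the constraint $q\le\tfrac{2^*}2+1$ surfaces only once, in the transparent exponent comparison $2(q-1)\ge(q-2)d$. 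This is a modest but genuine simplification; both arguments yield the same right-hand sides \eqref{eq:uutLq1}--\eqref{eq:uutLq2}.
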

\begin{proof}
	We adopt the same convention of the previous proof for the vertices $\v_1,\v_2$ of each edge $e\in\mathbb{E}_{\G_\varepsilon^d}$.
	By H\"older inequality, we have
	\begin{equation}
	\label{eq21}
	\begin{split}
	\left|\|u\|_{L^q(\G_\varepsilon^d)}^q-\varepsilon\sum_{e\in\mathbb{E}_{\G_\varepsilon^d}}|u(\v_1)|^q\right|\leq&\,\sum_{e\in\mathbb{E}_{\G_\varepsilon^d}}\int_e||u|^q(x)-|u|^q(\v_1)|\,dx
	\leq\sum_{e\in\mathbb{E}_{\G_\varepsilon^d}}\int_e\left|\int_e(|u|^q)'(y)\,dy\right|dx\\
	=&\,q\varepsilon\sum_{e\in\mathbb{E}_{\G_\varepsilon^d}}\int_e|u(y)|^{q-1}|u'(y)|\,dy\leq q\varepsilon\|u\|_{L^{2(q-1)}(\G_\varepsilon^d)}^{q-1}\|u'\|_{L^2(\G_\varepsilon^d)}\,.
	\end{split}
	\end{equation}
	Now, if $d=2$ and $q>2$ or $d\geq3$ and $q\leq \f{2^*}2+1$, that is $2(q-1)\leq2^*$, by \eqref{eq:dgnGe} and Lemma \ref{lem:GNeps} it follows
	\[
	\|u\|_{L^{2(q-1)}(\G_\varepsilon^d)}^{q-1}\lesssim_{d,q}\varepsilon^{\f{\left( q-2\right)}2(d-1)}\|u\|_{L^2(\G_\varepsilon^d)}^{\f{d+(2-d)(q-1)}2}\|u'\|_{L^2(\G_\varepsilon^d)}^{\f{\left(q-2\right)}2d}
	\]
	that coupled with \eqref{eq21} gives
	\begin{equation}
	\label{eq1i}
	\left|\|u\|_{L^q(\G_\varepsilon^d)}^q-\varepsilon\sum_{e\in\mathbb{E}_{\G_\varepsilon^d}}|u(\v_1)|^q\right|\lesssim_{d,q} \varepsilon^{1+\f{\left( q-2\right)}2(d-1)}\|u\|_{L^2(\G_\varepsilon^d)}^{\f{d+(2-d)(q-1)}2}\|u'\|_{L^2(\G_\varepsilon^d)}^{\f{\left(q-2\right)}2d+1}\,.
	\end{equation}
	Analogously, since by construction $u(\v)=\widetilde{u}(\v)$ for every $\v\in\mathbb{V}_{\G_\varepsilon^d}$, arguing as above and combining with Jensen inequality, Lemma \ref{lem:uutL2} we obtain, provided $\varepsilon$ is small enough,
	\begin{equation}
	\label{eq2i}
	\begin{split}
	\left|\|\widetilde{u}\|_{L^q(\G_\varepsilon^d)}^q-\varepsilon\sum_{e\in\mathbb{E}_{\G_\varepsilon^d}}|u(\v_1)|^q\right|=&\,\left|\|\widetilde{u}\|_{L^q(\G_\varepsilon^d)}^q-\varepsilon\sum_{e\in\mathbb{E}_{\G_\varepsilon^d}}|\widetilde{u}(\v_1)|^q\right|\\
	\lesssim_{d,q}&\, \varepsilon^{1+\f{\left( q-2\right)}2(d-1)}\|\widetilde{u}\|_{L^2(\G_\varepsilon^d)}^{\f{d+(2-d)(q-1)}2}\|\widetilde{u}'\|_{L^2(\G_\varepsilon^d)}^{\f{\left(q-2\right)}2d+1}\\
	\lesssim_{d,q}& \,\varepsilon^{1+\f{\left( q-2\right)}2(d-1)}\left(\|u\|_{L^2(\G_\varepsilon^d)}^2+\varepsilon\|u'\|_{L^2(\G_\varepsilon^d)}^2\right)^{\f{d+(2-d)(q-1)}4}\|u'\|_{L^2(\G_\varepsilon^d)}^{\f{\left(q-2\right)}2d+1}\\
	\lesssim_{d,q}& \,\left(\varepsilon^{1+\f{\left( q-2\right)}2(d-1)}\|{u}\|_{L^2(\G_\varepsilon^d)}^{\f{d+(2-d)(q-1)}2}\|{u}'\|_{L^2(\G_\varepsilon^d)}^{\f{\left(q-2\right)}2d+1}+\varepsilon^{\f12\left(3+\f{q-2}2d\right)}\|u'\|_{L^2(\G_\varepsilon^d)}^q\right)\,,
	\end{split}
	\end{equation}
	where the last inequality makes use of the subadditivity on $\R^+$ of the map $s\mapsto s^{\f{d+(2-d)(q-1)}4}$, which is guaranteed for every $q>2$ when $d=2$ and every $q\in\left(2,\f{2^*}2+1\right]$ when $d\geq 3$. Coupling \eqref{eq1i} and \eqref{eq2i} gives \eqref{eq:uutLq1}.
	
	If $d\geq3$ and $q>\f{2^*}2+1$, i.e. $2(q-1)>2^*$, we combine \eqref{eq21} with Proposition \ref{prop:GNint} to get, for every $\alpha\in(0,2]$,
	\begin{equation}
		\label{eq1ii}
		\left|\|u\|_{L^q(\G_\varepsilon^d)}^q-\varepsilon\sum_{e\in\mathbb{E}_{\G_\varepsilon^d}}|u(\v_1)|^q\right|\lesssim_{d,q,\alpha}\varepsilon^{\f{q-\alpha}2+1}\|u\|_{L^2(\G_\varepsilon^d)}^{\f\alpha2}\|u'\|_{L^2(\G_\varepsilon^d)}^{q-\f\alpha2}\,.
	\end{equation}
	Arguing as above on $\widetilde{u}$ we  have also
	\[
	\begin{split}
	\left|\|\widetilde{u}\|_{L^q(\G_\varepsilon^d)}^q-\varepsilon\sum_{e\in\mathbb{E}_{\G_\varepsilon^d}}|u(\v_1)|^q\right|\lesssim_{d,q,\alpha} &\,\varepsilon^{\f{q-\alpha}2+1}\|\widetilde{u}\|_{L^2(\G_\varepsilon^d)}^{\f\alpha2}\|\widetilde{u}'\|_{L^2(\G_\varepsilon^d)}^{q-\f\alpha2}\\
	\lesssim_{d,q,\alpha} & \,\left(\varepsilon^{\f{q-\alpha}2+1}\|u\|_{L^2(\G_\varepsilon^d)}^{\f\alpha2}\|u'\|_{L^2(\G_\varepsilon^d)}^{q-\f\alpha2}+\varepsilon^{\f q2+1-\f\alpha4}\|u'\|_{L^2(\G_\varepsilon^d)}^{q}\right)\,,
	\end{split}
	\]
	which together with \eqref{eq1ii} yields \eqref{eq:uutLq2} with $\gamma=\alpha/2$.
\end{proof}
We conclude this section with some elementary estimates involving the extension operator $\mathcal{A}$.

\begin{lemma}
	\label{lem:normA}
	For every $u\in H^1(\G_\varepsilon^d)$ it holds, as $\varepsilon\to0$,
	\begin{align}
		&\|\mathcal{A}u\|_{L^2(\R^d)}^2\leq 2^{d}(d+1)\varepsilon^{d-1}\left(\|u\|_{L^2(\G_\varepsilon^d)}^2+\varepsilon\|u'\|_{L^2(\G_\varepsilon^d)}^2\right) \label{eq:AL2}\\
		&\|\nabla\mathcal{A}u\|_{L^2(\R^d)}^2\leq \varepsilon^{d-1}\|u'\|_{L^2(\G_\varepsilon^d)}^2\,.\label{eq:gradAL2}
	\end{align}
	Furthermore, if $d\geq 3$ and $q>2^*$, then 
	for every $\gamma\in(0,1]$
	\begin{equation}
		\label{eq:ALq}
		\|\mathcal{A}u\|_{L^q(\R^d)}^q\leq C\varepsilon^{d-1}\left(\|u\|_{L^q(\G_\varepsilon^d)}^q+\varepsilon^{\f q2+1-\gamma}\|u\|_{L^2(\G_\varepsilon^d)}^\gamma\|u'\|_{L^2(\G_\varepsilon^d)}^{q-\gamma}\right)
	\end{equation}
	for a constant $C>0$ depending only on $d,p$ and $\gamma$.
\end{lemma}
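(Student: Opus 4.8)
The plan is to exploit the piecewise--affine structure of $\mathcal{A}u$, treating each simplex $S_{k,\sigma}$ separately and reducing every quantity to a sum over the vertices $\mathbb{V}_{\G_\varepsilon^d}$, which is then controlled by the norms of $u$ on the grid. Two elementary facts will be used repeatedly: each $S_{k,\sigma}$ has volume $\varepsilon^d/d!$, and its $d+1$ vertices all belong to $\mathbb{V}_{\G_\varepsilon^d}$.

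For \eqref{eq:AL2} and \eqref{eq:ALq}, I would start from the observation that an affine function on a simplex attains its maximum modulus at a vertex, so that, writing $w_0,\dots,w_d$ for the vertices of $S_{k,\sigma}$, $\|\mathcal{A}u\|_{L^r(S_{k,\sigma})}^r\le|S_{k,\sigma}|\max_i|u(w_i)|^r\le\f{\varepsilon^d}{d!}\sum_{i=0}^d|u(w_i)|^r$ for $r\in\{2,q\}$. Summing over $k$ and $\sigma$ and noting that each vertex is counted a number of times depending only on $d$ (in fact exactly $(d+1)!$), one obtains $\|\mathcal{A}u\|_{L^r(\R^d)}^r\lesssim_d\varepsilon^d\sum_{\v\in\mathbb{V}_{\G_\varepsilon^d}}|u(\v)|^r$. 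It then remains to bound the vertex sum. For $r=2$ I would invoke the estimate \eqref{sumV2} derived in the proof of Lemma~\ref{lem:uutL2}: since every vertex is the lower endpoint of exactly $d$ edges, $\sum_{e}|u(\v_1)|^2=d\sum_{\v}|u(\v)|^2$, whence $\varepsilon\sum_\v|u(\v)|^2\le\f1d((1+\varepsilon)\|u\|_{L^2(\G_\varepsilon^d)}^2+\varepsilon\|u'\|_{L^2(\G_\varepsilon^d)}^2)$, and \eqref{eq:AL2} follows for small $\varepsilon$. For $r=q$, assigning injectively to each vertex $\v$ one incident edge $e$ (say the one in a fixed positive coordinate direction) and integrating $|u(\v)|^q$ along $e$, the fundamental theorem of calculus gives $\varepsilon|u(\v)|^q\le\|u\|_{L^q(e)}^q+q\varepsilon\int_e|u|^{q-1}|u'|$; summing and using H\"older leads to $\varepsilon\sum_\v|u(\v)|^q\le\|u\|_{L^q(\G_\varepsilon^d)}^q+q\varepsilon\|u\|_{L^{2(q-1)}(\G_\varepsilon^d)}^{q-1}\|u'\|_{L^2(\G_\varepsilon^d)}$. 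Since $q>2^*$ forces $2(q-1)>2^*$, I would then apply Proposition~\ref{prop:GNint} at exponent $2(q-1)$ with parameter $\alpha=2\gamma\in(0,2]$ to rewrite the last product as $\varepsilon^{\f q2+1-\gamma}\|u\|_{L^2(\G_\varepsilon^d)}^\gamma\|u'\|_{L^2(\G_\varepsilon^d)}^{q-\gamma}$, the exponent of $\varepsilon$ coming out as $1+\f{q-\alpha}2=\f q2+1-\gamma$, which yields \eqref{eq:ALq}.

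The estimate \eqref{eq:gradAL2} rests on the geometry of the triangulation \eqref{eq:simplex}. On each $S_{k,\sigma}$ the vertices can be ordered as $w_0,\dots,w_d$ so that consecutive ones differ by $\varepsilon$ in a single coordinate; thus the segment $w_{j-1}w_j$ is an edge of $\G_\varepsilon^d$, the vertices form a monotone lattice path, and the $d$ directions involved exhaust all coordinate axes. Since $\mathcal{A}_{k,\sigma}u$ is affine, its (constant) gradient has in the corresponding direction the component $\f{u(w_j)-u(w_{j-1})}\varepsilon=\f1\varepsilon\int_{w_{j-1}w_j}u'$, so by Cauchy--Schwarz $|\nabla\mathcal{A}u|^2\le\f1\varepsilon\sum_{j=1}^d\|u'\|_{L^2(w_{j-1}w_j)}^2$. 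Multiplying by $|S_{k,\sigma}|=\varepsilon^d/d!$ and summing over $k,\sigma$, I would conclude via the combinatorial identity that every edge of $\G_\varepsilon^d$ occurs as a path--edge of exactly $d!$ simplices: each fundamental cell contains $d$ edges and $d\cdot d!$ path--edge incidences, distributed uniformly because \eqref{eq:simplex} is invariant under translations and coordinate permutations. This gives $\|\nabla\mathcal{A}u\|_{L^2(\R^d)}^2\le\f{\varepsilon^{d-1}}{d!}\,d!\,\|u'\|_{L^2(\G_\varepsilon^d)}^2=\varepsilon^{d-1}\|u'\|_{L^2(\G_\varepsilon^d)}^2$.

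The main obstacle is precisely this last, combinatorial, step for \eqref{eq:gradAL2}: recognizing the monotone--lattice--path structure of each simplex and verifying that the leading constant is exactly $1$ requires the sharp edge count $d!$, rather than a crude bound. A secondary difficulty, of a purely bookkeeping nature, is to track the exponents when feeding $2(q-1)$ into Proposition~\ref{prop:GNint} and matching $\alpha=2\gamma$ so that the power of $\varepsilon$ lands exactly at $\f q2+1-\gamma$ in \eqref{eq:ALq}.
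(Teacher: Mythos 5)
Your proposal is correct and follows essentially the same route as the paper: on each simplex you reduce to sums of vertex values (the paper via Jensen's inequality, you via the maximum principle for affine functions), you control those vertex sums through \eqref{sumV2} and, for $q>2^*$, the fundamental-theorem-of-calculus/H\"older estimate combined with Proposition \ref{prop:GNint} at exponent $2(q-1)$ with $\alpha=2\gamma$ (which is exactly the paper's \eqref{eq21}--\eqref{eq1ii}), and for \eqref{eq:gradAL2} you use the monotone-lattice-path structure of the simplexes together with the edge--simplex incidence count $d!$. Your symmetry/double-counting derivation of that count replaces the paper's explicit enumeration of shortest paths through a fixed edge, and your figure of $d$ grid edges per simplex is the internally consistent one (the paper's ``$d+1$'' is a slip), but the argument is the same.
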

\begin{proof}
	For every $k\in\Z^d$, recall that we denote by $C_k$ the $d$--dimensional cube of edgelength $\varepsilon$ with edges on $\G_\varepsilon^d$ and vertex with smallest coordinates at $\varepsilon k$.  For every $\sigma\in\mathbb{S}_d$, on the simplex $S_{k,\sigma}$ given in \eqref{eq:simplex} by definition $\mathcal{A}u$ is a convex combination of the values of $u$ at the $d+1$ vertices of $C_k$ contained in $S_{k,\sigma}$. Hence, by Jensen inequality, for every $q\geq 2$
	\[
	\|\mathcal{A}u\|_{L^q(S_{k,\sigma})}^q\leq \f{(d+1)^{q-1}}{d!}\varepsilon^d\sum_{\v\in\mathbb{V}_{\G_\varepsilon^d}\cap S_{k,\sigma}}u(\v)^q\,,
	\] 
	so that summing over $\sigma\in\mathbb{S}_d$
	\[
	\begin{split}
	\|\mathcal{A}u\|_{L^q(C_k)}^q\leq&\sum_{\sigma\in\mathbb{S}_d}\f{(d+1)^{q-1}}{d!}\varepsilon^d\sum_{\v\in\mathbb{V}_{\G_\varepsilon^d}\cap S_{k,\sigma}}u(\v)^q\\
	\leq& \sum_{\v\in\mathbb{V}_{\G_\varepsilon^d}\cap C_k}u(\v)^q\sum_{\sigma\in\mathbb{S}_d}\f{(d+1)^{q-1}}{d!}\varepsilon^d=(d+1)^{q-1}\varepsilon^d\sum_{\v\in\mathbb{V}_{\G_\varepsilon^d}\cap C_k}u(\v)^q\,.
	\end{split}
	\]
	Since each vertex of $\G_\varepsilon^d$ belongs to $2^d$ such cubes, this entails
	\[
	\|\mathcal{A}u\|_{L^q(\R^d)}^q=\sum_{k\in\Z^d}\|\mathcal{A}u\|_{L^q(C_k)}^q\leq (d+1)^{q-1}\varepsilon^d\sum_{k\in\Z^d}\sum_{\v\in\mathbb{V}_{\G_\varepsilon^d}\cap C_k}u(\v)^q=2^d(d+1)^{q-1}\sum_{\v\in\mathbb{V}_{\G_\varepsilon^d}}|u(\v)|^q\,.
	\]
	When $q=2$, coupling with \eqref{sumV2} gives \eqref{eq:AL2}, whereas for $q>2^*$ combining with \eqref{eq1ii} yields \eqref{eq:ALq}.
	
	To prove \eqref{eq:gradAL2}, observe first that, for every $k\in\Z^d$, there is a one--to--one correspondence between the simplexes $S_{k,\sigma}$ as in \eqref{eq:simplex} contained in $C_k$ and the shortest paths in $\G_\varepsilon^d$ going from the vertex with smallest coordinates of $C_k$ to that with largest coordinates, since both sets are in one--to--one correspondence with the symmetric group $\mathbb{S}_d$. This immediately tells that each simplex contains exactly $d+1$ edges of $\G_\varepsilon^d$. Moreover, given any edge $e\in\mathbb{E}_{\G_\varepsilon^d}$, to find the total number $N(e)$ of simplexes $S_{k,\sigma}$ that contain $e$ it is enough to count the number of times $e$ belongs to a shortest path connecting the vertices of $C_k$ with smallest and largest coordinates, for some $k\in\Z^d$. 
	 With no loss of generality, let $e=(\v_1,\v_2)$, where $\v_1,\v_2\in\mathbb{V}_{\G_\varepsilon^d}$ are such that the $j$--th coordinate of $\v_1$ is smaller than that of $\v_2$, while all the other coordinates are the same. Then $e$ belongs to $C_k$ if and only if the $j$--th coordinate of $\v_1-\varepsilon k$ is equal to $0$ and, among the remaining $d-1$ coordinates, $i$ of them are equal to $\varepsilon$ and the other $d-i-1$ are equal to $0$, for some $i=0,\dots,d-1$. For fixed $i$, the number of such $k\in\Z^d$ is $\binom{d-1}{i}$. Morever, since $\varepsilon k$ is the vertex with smallest coordinates in $C_k$ and it has $i$ coordinates smaller than those of $\v_1$, there are $i!(d-1-i)!$ shortest paths in $C_k$ starting at $\varepsilon k$, passing through $e$ and ending at the vertex with largest coordinates of $C_k$.  Therefore, the total number of such paths $e$ belongs to is
	 \[
	 N(e)=\sum_{i=0}^{d-1}\binom{d-1}{i}i!(d-1-i)!=d!\,.
	 \]
	 Since a direct computation shows that, if $e_{k,\sigma}^1,\dots,e_{k,\sigma}^{d+1}$ are the $d+1$ edges of $\G_\varepsilon^d$ that belong to $S_{k,\sigma}$,
	 \[
	 \|\nabla\mathcal{A}u\|_{L^2(S_{k,\sigma})}^2=\f{\varepsilon^{d-1}}{d!}\sum_{i=1}^{d+1}\|\widetilde{u}'\|_{L^2(e_{k,\sigma}^i)}^2\,,
	 \]
	 we obtain
	 \[
	 \|\nabla\mathcal{A}u\|_{L^2(\R^d)}^2=\f{\varepsilon^{d-1}}{d!}\sum_{k\in\Z^d}\sum_{\sigma\in\mathbb{S}_d}\sum_{i=1}^{d+1}\|\widetilde{u}'\|_{L^2(e_{k,\sigma}^i)}^2=\f{\varepsilon^{d-1}}{d!}\sum_{e\in\mathbb{E}_{\G_\varepsilon^d}}N(e)\|\widetilde{u}'\|_{L^2(e)}^2=\varepsilon^{d-1}\|\widetilde{u}'\|_{L^2(\G_\varepsilon^d)}^2\,.
	 \]
	 Combining with \eqref{eq:util'}, we conclude.
\end{proof}

\section{A priori estimates on ground states of $\widetilde{\JJ}_{\omega,\G_\varepsilon^d}$ and  $\widetilde{\EE}_{\G_\varepsilon^d}$}
\label{sec:apriori}

Throughout this section we establish a priori estimates for ground states of $\wJ_{\omega,\G_\varepsilon^d}$ and $\widetilde{E}_{\G_\varepsilon^d}$ as in \eqref{eq:Jtilde}--\eqref{eq:Etilde}.  The argument being analogous, we develop the discussion of both problems in parallel. 

Since will be needed in the following, let us recall here some well--known facts about the action and energy ground state problems. As for the action, it is readily seen that, if $u\in\widetilde{\NN}_{\omega,\G_\varepsilon^d}$, then
\begin{equation}
	\label{eq:JsuNG}
\wJ_{\omega,\G_\varepsilon^d}(u)=\f\kappa d\|u\|_{L^p(\G_\varepsilon^d)}^p\,,\qquad\kappa=\f12-\f1p\,,
\end{equation}
so that
\[
	\widetilde{\JJ}_{\G_\varepsilon^d}(\omega)=\f\kappa d\inf_{v\in\widetilde{\NN}_{\omega,\G_\varepsilon^d}}\|v\|_{L^p(\G_\varepsilon^d)}^p\,.
\]
Moreover, there is a natural projection of $H^1(\G_\varepsilon^d)$ on $\widetilde{\NN}_{\omega,\G_\varepsilon^d}$, since, setting
\[
\widetilde{\pi}_\omega(u):=\left(\f{d\|u'\|_{L^2(\G_\varepsilon^d)}^2+\omega\|u\|_{L^2(\G_\varepsilon^d)}^2}{\|u\|_{L^p(\G_\varepsilon^d)}^p}\right)^{\f1{p-2}}\,,
\]
it holds $\widetilde{\pi}_\omega(u)u\in\widetilde{\NN}_{\omega,\G_\varepsilon^d}$ for every $u\in H^1(\G_\varepsilon^d)$.

Analogous properties hold true for the action problem on $\R^d$, as for every $u\in\NN_{\omega,\R^d}$ it holds
\[
J_{\omega,\R^d}(u)=\kappa\|u\|_{L^p(\R^d)}^p
\]
and setting, for every $u\in H^1(\R^d)$,
\[
\pi_\omega(u)=\left(\f{\|\nabla u\|_{L^2(\R^d)}^2+\omega\|u\|_{L^2(\R^d)}^2}{\|u\|_{L^p(\R^d)}^p}\right)^{\f1{p-2}}\,,
\]
then $\pi_\omega(u)u\in\NN_{\omega,\R^d}$.

Recall also that, by standard scaling arguments,  for every $p\in(2,2^*)$ and $\omega>0$,
\begin{equation}
	\label{eq:JRdw}
	\JJ_{\R^d}(\omega)=\JJ_{\R^d}(1)\omega^{\f{2p-d(p-2)}{2(p-2)}},\qquad\JJ_{\R^d}(1)>0\,,
\end{equation}whereas for every $p\in\left(2,2+\f4d\right)$ and $\mu>0$
\begin{equation}
	\label{eq:ERdm}
	\EE_{\R^d}(\mu)=\EE_{\R^d}(1)\mu^{\f{2p-d(p-2)}{4-d(p-2)}},\qquad\EE_{\R^d}(1)<0\,.
\end{equation}
We now state the main results of this section in the following two theorems.

\begin{theorem}
	\label{thm:stimeJe}
	For every $p\in(2,2^*)$ and $\omega>0$, there exists $\overline{\varepsilon}>0$ and $C>0$, depending only on $p$, $\omega$ and $d$, such that, for every $\varepsilon\in(0,\overline{\varepsilon})$, if $u\in \widetilde{\NN}_{\omega,\G_\varepsilon^d}$ is a ground state of $\wJ_{\omega,\G_\varepsilon^d}$, then
	\[
	\begin{split}
		\f1C\leq&\,\varepsilon^{d-1}\|u'\|_{L^2(\G_\varepsilon^d)}^2 \leq C\\
		\f1C\leq&\,\varepsilon^{d-1}\|u\|_{L^2(\G_\varepsilon^d)}^2\leq C\\
		\f1C\leq&\,\varepsilon^{d-1}\|u\|_{L^p(\G_\varepsilon^d)}^p\leq C\,.
		\end{split}
	\]
\end{theorem}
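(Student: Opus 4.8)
The plan is to reduce all three pairs of inequalities to two-sided bounds on the ground state \emph{level} and then read off each quantity from the Nehari identity combined with the Gagliardo--Nirenberg inequality \eqref{eq:dgnGe}. Throughout I write $A:=\|u'\|_{L^2(\G_\varepsilon^d)}^2$, $B:=\|u\|_{L^2(\G_\varepsilon^d)}^2$ and $P:=\|u\|_{L^p(\G_\varepsilon^d)}^p$, so that membership in $\widetilde{\NN}_{\omega,\G_\varepsilon^d}$ reads $dA+\omega B=P$, while \eqref{eq:JsuNG} gives $\wJ_{\omega,\G_\varepsilon^d}(u)=\widetilde{\JJ}_{\G_\varepsilon^d}(\omega)=\f{\kappa}{d}P$ with $\kappa=\f12-\f1p>0$. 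Hence the third pair of inequalities is exactly equivalent to $\f1C\le\varepsilon^{d-1}P\le C$, i.e.\ to a two-sided bound on $\varepsilon^{d-1}\widetilde{\JJ}_{\G_\varepsilon^d}(\omega)$, and I would prove these first. With $q=p$, \eqref{eq:dgnGe} together with Lemma \ref{lem:GNeps} reads $P\le K_{p,\G_1^d}\,\varepsilon^{(\f p2-1)(d-1)}B^{a}A^{b}$, where $a:=\f12\bigl(d+(2-d)\f p2\bigr)$ and $b:=\f d2\bigl(\f p2-1\bigr)$ satisfy $a+b=\f p2$, with $b>0$ since $p>2$ and $a>0$ precisely because $p<2^*$; these two sign facts are exactly what makes the argument close.

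For the upper bound on the level I would fix any nonzero $w\in C^1(\R^d)\cap H^2(\R^d)\cap L^\infty(\R^d)$, let $w_\varepsilon$ be its restriction to $\G_\varepsilon^d$, and project it onto the Nehari manifold through $\widetilde{\pi}_\omega$, so that $\widetilde{\pi}_\omega(w_\varepsilon)w_\varepsilon\in\widetilde{\NN}_{\omega,\G_\varepsilon^d}$ is an admissible competitor. By Lemma \ref{lem:restr} (estimates \eqref{eq:uRdG}--\eqref{eq:u'RdG}) one has $\varepsilon^{d-1}\|w_\varepsilon'\|_{L^2(\G_\varepsilon^d)}^2\to\|\nabla w\|_{L^2(\R^d)}^2$, $\varepsilon^{d-1}\|w_\varepsilon\|_{L^2(\G_\varepsilon^d)}^2\to d\|w\|_{L^2(\R^d)}^2$ and $\varepsilon^{d-1}\|w_\varepsilon\|_{L^p(\G_\varepsilon^d)}^p\to d\|w\|_{L^p(\R^d)}^p$, whence $\widetilde{\pi}_\omega(w_\varepsilon)\to\pi_\omega(w)$, a finite positive constant. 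Evaluating $\wJ_{\omega,\G_\varepsilon^d}$ along this projection and multiplying by $\varepsilon^{d-1}$ then yields $\limsup_{\varepsilon\to0}\varepsilon^{d-1}\widetilde{\JJ}_{\G_\varepsilon^d}(\omega)\le\kappa\,\pi_\omega(w)^p\|w\|_{L^p(\R^d)}^p<\infty$, so $\varepsilon^{d-1}P\le C$ for small $\varepsilon$. Because $dA\le P$ and $\omega B\le P$ by the Nehari identity, this simultaneously delivers the two upper bounds $\varepsilon^{d-1}A\le C$ and $\varepsilon^{d-1}B\le C$ for free.

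The lower bound on the level comes from feeding the Nehari identity into the Gagliardo--Nirenberg estimate above. Since $A\le P/d$, $B\le P/\omega$ and $a+b=\f p2$, I would bound $B^{a}A^{b}\le\omega^{-a}d^{-b}P^{p/2}$, so that $P\le K_{p,\G_1^d}\,\varepsilon^{(\f p2-1)(d-1)}\omega^{-a}d^{-b}P^{p/2}$; dividing by $P^{p/2}$ (legitimate as $P>0$) and using $p>2$ gives $\varepsilon^{d-1}P\ge\bigl(\omega^{a}d^{b}/K_{p,\G_1^d}\bigr)^{1/(p/2-1)}=:1/C>0$. This closes the two-sided bound on $\varepsilon^{d-1}P$, hence on the level and on $\varepsilon^{d-1}\|u\|_{L^p(\G_\varepsilon^d)}^p$.

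It remains to bound $\varepsilon^{d-1}A$ and $\varepsilon^{d-1}B$ from below, which is the delicate point, since everything so far controls only the combination $dA+\omega B$. Here I would reinsert the established bounds into Gagliardo--Nirenberg: from $P\le K_{p,\G_1^d}\,\varepsilon^{(\f p2-1)(d-1)}B^{a}A^{b}$, the lower bound $\varepsilon^{d-1}P\ge1/C$ together with the upper bound $\varepsilon^{d-1}A\le C$ forces $\varepsilon^{d-1}B\ge1/C'$ after collecting powers of $\varepsilon$ (the exponent bookkeeping closes thanks to $a+b=\f p2$, and one divides by the positive exponent $a$, which is exactly where $p<2^*$ enters); symmetrically, pairing $\varepsilon^{d-1}P\ge1/C$ with $\varepsilon^{d-1}B\le C$ and dividing by $b>0$ yields $\varepsilon^{d-1}A\ge1/C''$. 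I expect this last step to be the main obstacle: separating the two $L^2$-type quantities requires applying \eqref{eq:dgnGe} twice more and carefully tracking the $\varepsilon$-scaling of $K_{p,\G_\varepsilon^d}$ supplied by Lemma \ref{lem:GNeps}, and it is precisely here that the hypothesis $p\in(2,2^*)$ is used in an essential way, through the simultaneous positivity of the exponents $a$ and $b$.
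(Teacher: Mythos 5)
Your proof is correct, but it takes a genuinely different route from the paper's. The paper first rescales (Remark \ref{rem:GetoG1}) so that a ground state on $\G_\varepsilon^d$ at frequency $\omega$ becomes a ground state on the unit grid $\G_1^d$ at the small frequency $\varepsilon^2\omega$, and then proves sharp power-law estimates in that frequency (Proposition \ref{prop:JG1}); there the lower bounds require a case distinction: for $p\in\left(2,2+\f4d\right)$ one chains the Nehari identity with \eqref{eq:dgnGe} twice, while for $p\in\left[2+\f4d,2^*\right)$ one inverts an inequality whose exponent $2-\left(\f p2-1\right)d$ is nonpositive. Your argument stays on $\G_\varepsilon^d$ throughout and needs no case split. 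Its decisive points are: (a) minimality is used only for the upper bound on the level, obtained as in Lemma \ref{lem:upest} by restricting a fixed competitor and projecting via $\widetilde{\pi}_\omega$ (the paper restricts the $\R^d$ ground state, but any fixed nonzero $w\in C^1\cap H^2\cap L^\infty$ suffices for mere boundedness); (b) the Nehari identity plus \eqref{eq:dgnGe} alone forces the lower bound on $\varepsilon^{d-1}\|u\|_{L^p(\G_\varepsilon^d)}^p$; and (c) after multiplying by $\varepsilon^{d-1}$, the identity $a+b=\f p2$ makes the factor $\varepsilon^{\left(\f p2-1\right)(d-1)}$ from Lemma \ref{lem:GNeps} cancel exactly, so the normalized quantities satisfy an $\varepsilon$-independent Gagliardo--Nirenberg inequality $\widetilde P\leq K\widetilde B^a\widetilde A^b$, from which the two remaining lower bounds follow by dividing by the positive exponents $a$ (positive precisely because $p<2^*$) and $b$ (positive precisely because $p>2$). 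What the paper's longer route buys is quantitative information your argument does not produce: the precise rates in $\omega$ of Proposition \ref{prop:JG1}, which are reused elsewhere (notably in the proof of Proposition \ref{prop:mult}, where $\|v\|_{L^2(\G_1^d)}^2\to+\infty$ as $\omega\to0^+$ is essential). What your route buys is economy: a uniform, self-contained argument on $\G_\varepsilon^d$ valid for all $p\in(2,2^*)$ at once, which isolates exactly where each endpoint hypothesis enters.
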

\begin{theorem}
	\label{thm:stimeGe}
	For every $p\in\left(2,2+\f4d\right)$ and $\mu>0$, there exists $\overline{\varepsilon}>0$ and $C>0$, depending only on $p$, $\mu$ and $d$, such that, for every $\varepsilon\in(0,\overline{\varepsilon})$, if $u\in H_{\f d{\varepsilon^{d-1}}\mu}^1(\G_\varepsilon^d)$ is a ground state of $\widetilde{E}_{\G_\varepsilon^d}$ at mass $\f{d}{\varepsilon^{d-1}}\mu$, then
	\[
	\begin{split}
		\f1C\leq&\,\varepsilon^{d-1}\|u'\|_{L^2(\G_\varepsilon^d)}^2 \leq C\\
		\f1C\leq&\,\varepsilon^{d-1}\|u\|_{L^p(\G_\varepsilon^d)}^p\leq C\,.
	\end{split}
\]
\end{theorem}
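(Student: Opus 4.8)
\emph{Setup.} Write $\mu_\varepsilon:=\frac{d}{\varepsilon^{d-1}}\mu$ for the prescribed mass, so that the constraint $u\in H^1_{\mu_\varepsilon}(\G_\varepsilon^d)$ fixes $\varepsilon^{d-1}\|u\|_{L^2(\G_\varepsilon^d)}^2=d\mu$ once and for all; this is why, unlike Theorem \ref{thm:stimeJe}, no $L^2$--estimate appears in the statement. Introducing the normalized gradient quantity $\tau_\varepsilon:=\varepsilon^{d-1}\|u'\|_{L^2(\G_\varepsilon^d)}^2$, the plan is to prove the four bounds by coupling an \emph{upper bound} on the rescaled ground state energy, obtained by restricting a good test function from $\R^d$, with the Gagliardo--Nirenberg inequality \eqref{eq:dgnGe}, whose mass--subcritical character (because $p<2+\frac4d$) makes the energy coercive in $\tau_\varepsilon$.

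\emph{Upper bound on the energy level.} First I would fix a single admissible competitor on $\R^d$: take $\phi=\phi_\mu\in C^1(\R^d)\cap H^2(\R^d)\cap L^\infty(\R^d)$, the ground state of $E_{\R^d}$ at mass $\mu$, which by \eqref{eq:ERdm} satisfies $E_{\R^d}(\phi)=\EE_{\R^d}(\mu)<0$. Restricting $\phi$ to $\G_\varepsilon^d$ and applying Lemma \ref{lem:restr} gives $\frac{\varepsilon^{d-1}}d\|\phi_\varepsilon\|_{L^2(\G_\varepsilon^d)}^2=\mu+O(\varepsilon)$, so rescaling by $c_\varepsilon:=(\mu_\varepsilon/\|\phi_\varepsilon\|_{L^2(\G_\varepsilon^d)}^2)^{1/2}\to1$ produces an admissible competitor $c_\varepsilon\phi_\varepsilon\in H^1_{\mu_\varepsilon}(\G_\varepsilon^d)$. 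Using \eqref{eq:uRdG}--\eqref{eq:u'RdG} once more, $\varepsilon^{d-1}\widetilde E_{\G_\varepsilon^d}(c_\varepsilon\phi_\varepsilon)\to E_{\R^d}(\phi)<0$, whence there is $C_0<0$ with
\[
\varepsilon^{d-1}\widetilde{\EE}_{\G_\varepsilon^d}(\mu_\varepsilon)\leq C_0<0\qquad\text{for all small }\varepsilon.
\]

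\emph{The scaling cancellation and coercivity.} The crux is to feed \eqref{eq:dgnGe} together with the constant scaling $K_{p,\G_\varepsilon^d}=\varepsilon^{(\frac p2-1)(d-1)}K_{p,\G_1^d}$ of Lemma \ref{lem:GNeps} into the normalized quantities. Substituting $\|u\|_{L^2}^2=d\mu\,\varepsilon^{-(d-1)}$ and $\|u'\|_{L^2}^2=\tau_\varepsilon\varepsilon^{-(d-1)}$ and multiplying by $\varepsilon^{d-1}$, the total power of $\varepsilon$ is $(d-1)\bigl[1+(\tfrac p2-1)-\tfrac a2-\tfrac b2\bigr]$ with $\tfrac a2+\tfrac b2=\tfrac p2$ (where $a=d+(2-d)\tfrac p2$, $b=(\tfrac p2-1)d$); hence it vanishes, leaving the clean inequality
\[
\varepsilon^{d-1}\|u\|_{L^p(\G_\varepsilon^d)}^p\leq K_{p,\G_1^d}(d\mu)^{\frac a2}\,\tau_\varepsilon^{\frac{d(p-2)}4}.
\]
Since $p<2+\frac4d$, the exponent $\frac{d(p-2)}4<1$, which is exactly what renders the rescaled energy coercive. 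The hard part is really just this scaling bookkeeping together with recognizing that mass--subcriticality yields a sublinear power of $\tau_\varepsilon$; everything else is then an elementary one--variable analysis.

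\emph{Deriving the four bounds.} From the two displays above,
\[
C_0\geq\varepsilon^{d-1}\widetilde E_{\G_\varepsilon^d}(u)=\tfrac12\tau_\varepsilon-\tfrac1{dp}\varepsilon^{d-1}\|u\|_{L^p(\G_\varepsilon^d)}^p\geq\tfrac12\tau_\varepsilon-\tfrac{K_{p,\G_1^d}(d\mu)^{a/2}}{dp}\,\tau_\varepsilon^{\frac{d(p-2)}4}.
\]
As the right--hand side tends to $+\infty$ with $\tau_\varepsilon$ (sublinear power), this forces $\tau_\varepsilon\leq C$, the upper bound on $\varepsilon^{d-1}\|u'\|_{L^2}^2$; the Gagliardo--Nirenberg display then gives $\varepsilon^{d-1}\|u\|_{L^p}^p\leq C$. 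For the lower bounds, using $\tau_\varepsilon\geq0$ in the chain above yields $\frac1{dp}\varepsilon^{d-1}\|u\|_{L^p}^p\geq -C_0>0$, i.e. $\varepsilon^{d-1}\|u\|_{L^p}^p\geq -dp\,C_0>0$; inserting this lower bound back into the Gagliardo--Nirenberg display bounds $\tau_\varepsilon$ from below, $\tau_\varepsilon^{d(p-2)/4}\geq\frac{-dp\,C_0}{K_{p,\G_1^d}(d\mu)^{a/2}}>0$. Collecting these estimates and enlarging $C$ completes the proof; the energy case of Theorem \ref{thm:stimeGe} runs in exact parallel to the action case of Theorem \ref{thm:stimeJe}, the only structural difference being that the $L^2$--norm is pinned by the mass constraint rather than estimated.
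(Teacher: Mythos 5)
Your proof is correct, and it rests on the same three ingredients as the paper's: a negative upper bound on the rescaled ground state level obtained by restricting the $\R^d$ ground state (you re-derive from Lemma \ref{lem:restr} what the paper states as Lemma \ref{lem:upest}), the $d$--dimensional Gagliardo--Nirenberg inequality \eqref{eq:dgnGe} together with the $\varepsilon$--scaling of its sharp constant (Lemma \ref{lem:GNeps}), and the sublinearity $\frac{d(p-2)}{4}<1$ coming from mass--subcriticality. The organization, however, is genuinely different. The paper never works on $\G_\varepsilon^d$ directly: through the NLS scaling of Remark \ref{rem:GetoG1} it transports ground states at mass $\frac{d}{\varepsilon^{d-1}}\mu$ on $\G_\varepsilon^d$ to ground states on the unit grid $\G_1^d$ at small mass $m=d\varepsilon^{\frac1\beta+1-d}\mu\to0$, proves the two-sided power-law bounds $\|u'\|_{L^2(\G_1^d)}^2,\,\|u\|_{L^p(\G_1^d)}^p\asymp m^{\frac{2p-d(p-2)}{4-d(p-2)}}$ of Proposition \ref{prop:estG1}, and only then scales back. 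You instead stay on $\G_\varepsilon^d$, normalize by the mass constraint, and observe that the powers of $\varepsilon$ cancel exactly in the Gagliardo--Nirenberg display; your bookkeeping is right, since the two Gagliardo--Nirenberg exponents sum to $p$, so multiplying by $\varepsilon^{d-1}$ leaves an $\varepsilon$--free inequality with the sublinear power $\tau_\varepsilon^{d(p-2)/4}$. Your route is shorter and self-contained for the theorem as stated; what the paper's detour buys is the explicit small-mass asymptotics on the fixed grid $\G_1^d$, which are of independent interest and whose action counterpart (Proposition \ref{prop:JG1}, proved in parallel with Proposition \ref{prop:estG1}) is reused in the proof of the multiplicity result, Proposition \ref{prop:mult}. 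The order in which you extract the four bounds (coercivity in $\tau_\varepsilon$ first, then the $L^p$ bounds) differs immaterially from the paper's, which starts from the lower bound on the $L^p$ norm; both are elementary once the negative energy level bound and the Gagliardo--Nirenberg estimate are in place.
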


\begin{remark}
	\label{rem:GetoG1}
	To prove Theorems \ref{thm:stimeJe}--\ref{thm:stimeGe} we exploit the relation between the action and energy ground state problems on grids with different edgelength. Recall that, given a metric graph $\G$ and a function $u\in H^1(\G)$, setting for every $t>0$
	\begin{equation}
		\label{eq:scaling}
		\widehat{u}(x):=t^\alpha u(t^\beta x),\qquad \alpha=\f2{6-p},\quad\beta=\f{p-2}{6-p}\,,
	\end{equation}
	we have $\widehat{u}\in H^1(t^{-\beta} \G)$,
	\[
	\int_{t^{-\beta}\G}|\widehat{u}'|^2\,dx=t^{2\beta+1}\int_\G|u'|^2\,dx,\qquad\int_{t^{-\beta}\G}|\widehat{u}|^p\,dx=t^{2\beta+1}\int_\G|u|^p\,dx,\qquad\int_{t^{-\beta}\G}|\widehat{u}|^2\,dx=t\int_{\G}|u|^2\,dx\,.
	\]
	Hence, taking $\varepsilon>0$, $\G=\G_\varepsilon^d$ and $t=\varepsilon^{\f1\beta}$, it follows that $u\in H^1(\G_\varepsilon^d)$ if and only if $\widehat{u}\in H^1(\G_1^d)$ and
	\[
		\begin{split}
			\|\widehat{u}\|_{L^2(\G_1^d)}^2&\,=\varepsilon^{\f1\beta}\|u\|_{L^2(\G_\varepsilon^d)}^2\\
			\|\widehat{u}'\|_{L^2(\G_1^d)}^2&\,=\varepsilon^{2+\f1\beta}\|u'\|_{L^2(\G_\varepsilon^d)}^2\\ 
			\|\widehat{u}\|_{L^p(\G_1^d)}^p&\,=\varepsilon^{2+\f1\beta}\|u\|_{L^p(\G_\varepsilon^d)}^p\,,
		\end{split}
	\]
	As for the action problem, this shows that $u\in\widetilde{\NN}_{\omega,\G_\varepsilon^d}$ if and only if $\widehat{u}\in \widetilde{\NN}_{\varepsilon^2\omega,\G_1^d}$, and
	\begin{equation}
		\label{eq:scalJ}
		\wJ_{\varepsilon^2\omega,\G_1^d}(\widehat{u})=\varepsilon^{2+\f1\beta}\wJ_{\omega,\G_\varepsilon^d}(u)\,.
	\end{equation}
	As for the energy problem,  we obtain that $u\in H_{\f{d}{\varepsilon^{d-1}}\mu}^1(\G_\varepsilon^d)$ if and only if $\widehat{u}\in H_{d\varepsilon^{\f1\beta+1-d}\mu}^1(\G_1^d)$, and
	\begin{equation}
		\label{Etildeeps1}
		\widetilde{E}_{\G_1^d}(\widehat{u})=\varepsilon^{2+\f1\beta}\widetilde{E}_{\G_\varepsilon^d}\left(u\right)\,.
	\end{equation}
	Note that
	\[
		p\in\left(2,2+\f4d\right)\qquad\Longrightarrow\qquad \f1\beta+1-d=\f4{p-2}-d>0 \,,
	\]
	so that $\varepsilon^{\f1\beta+1-d}\to0$ as $\varepsilon\to0$ for every $2<p<2+\f4d$. 
\end{remark}

In view of Remark \ref{rem:GetoG1}, Theorems \ref{thm:stimeJe}--\ref{thm:stimeGe} are a direct consequence of the next two propositions. 
\begin{proposition}
	\label{prop:JG1}
	For every $p\in(2,2^*)$, there exists $\overline{\omega}>0$ and $C>0$, depending only on $p$ and $d$, such that, for every $\omega\in(0,\overline{\omega})$, if $u\in \widetilde{\NN}_{\omega,\G_1^d}$ is a ground state of $\wJ_{\omega,\G_1^d}$, then 
	\begin{align}
	\f1C\omega^{\f {2p-d(p-2)}{2(p-2)}}\leq \|u'\|_{L^2(\G_1^d)}^2\leq C\omega^{\f {2p-d(p-2)}{2(p-2)}}\label{eq:u'J1}\\
	\f1C\omega^{\f {4-d(p-2)}{2(p-2)}}\leq \|u\|_{L^2(\G_1^d)}^2\leq C\omega^{\f {4-d(p-2)}{2(p-2)}}\label{eq:uL2J1}\\
	\f1C\omega^{\f {2p-d(p-2)}{2(p-2)}}\leq \|u\|_{L^p(\G_1^d)}^p\leq C\omega^{\f {2p-d(p-2)}{2(p-2)}}\label{eq:uLpJ1}\,.
	\end{align}
\end{proposition}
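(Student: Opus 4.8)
The plan is to reduce all six inequalities to a single two--sided estimate on the ground state level and then read off the individual norms from the Nehari identity. By \eqref{eq:JsuNG} (with $\varepsilon=1$), any ground state $u$ of $\wJ_{\omega,\G_1^d}$ satisfies $\|u\|_{L^p(\G_1^d)}^p=\frac{d}{\kappa}\wJ_{\omega,\G_1^d}(u)=\frac{d}{\kappa}\widetilde{\JJ}_{\G_1^d}(\omega)$, so \eqref{eq:uLpJ1} is equivalent to the level bound
\[
\frac1C\,\omega^{s}\le \widetilde{\JJ}_{\G_1^d}(\omega)\le C\,\omega^{s},\qquad s:=\frac{2p-d(p-2)}{2(p-2)}.
\]
Evaluating $\wJ_{\omega,\G_1^d}$ at the Nehari projection $\widetilde{\pi}_\omega(v)v\in\widetilde{\NN}_{\omega,\G_1^d}$ gives the scale--invariant representation
\[
\widetilde{\JJ}_{\G_1^d}(\omega)=\frac{\kappa}{d}\inf_{v\in H^1(\G_1^d)}\frac{\bigl(d\|v'\|_{L^2(\G_1^d)}^2+\omega\|v\|_{L^2(\G_1^d)}^2\bigr)^{\frac{p}{p-2}}}{\bigl(\|v\|_{L^p(\G_1^d)}^p\bigr)^{\frac{2}{p-2}}},
\]
which is the workhorse for both bounds.

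For the lower bound I would bound the $L^p$ term sitting in the denominator from above by the $d$--dimensional Gagliardo--Nirenberg inequality \eqref{eq:dgnGe} at $q=p$, writing $\|v\|_{L^p}^p\le K(\|v\|_{L^2}^2)^{a/2}(\|v'\|_{L^2}^2)^{b/2}$ with $a=d+(2-d)\tfrac p2$, $b=d(\tfrac p2-1)$; here $a,b>0$ and $a+b=p$ precisely because $p<2^*$. This makes the quotient bounded below by a constant multiple of $(dX+\omega Y)^{\frac{p}{p-2}}/(X^{b/2}Y^{a/2})^{\frac{2}{p-2}}$, depending only on $X=\|v'\|_{L^2}^2$ and $Y=\|v\|_{L^2}^2$. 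Minimizing over all $X,Y>0$ by weighted AM--GM (optimum at $dX:\omega Y=b:a$, where the free scaling parameter cancels) yields a minimum proportional to $\omega^{a/(p-2)}=\omega^{s}$, so $\widetilde{\JJ}_{\G_1^d}(\omega)\gtrsim_{d,p}\omega^{s}$ for every $\omega>0$.

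For the matching upper bound I would use a fixed profile as a competitor. Fixing $\psi\in C_c^\infty(\R^d)$ with $\psi\not\equiv0$ and setting $\eta:=\sqrt\omega$, let $w_\omega$ be the restriction to $\G_1^d$ of $\psi(\eta\,\cdot)$. Since $w_\omega(x)=\psi_\eta(\eta x)$, where $\psi_\eta$ denotes the restriction of $\psi$ to $\G_\eta^d$, the scaling relations behind Lemma \ref{lem:GNeps} give $\|w_\omega\|_{L^r(\G_1^d)}^r=\eta^{-1}\|\psi_\eta\|_{L^r(\G_\eta^d)}^r$ and $\|w_\omega'\|_{L^2(\G_1^d)}^2=\eta\|\psi_\eta'\|_{L^2(\G_\eta^d)}^2$; combining with Lemma \ref{lem:restr} applied to $\psi$ on $\G_\eta^d$ as $\eta\to0$ yields
\[
\|w_\omega\|_{L^r(\G_1^d)}^r=d\,\eta^{-d}\bigl(\|\psi\|_{L^r(\R^d)}^r+O(\eta)\bigr),\qquad \|w_\omega'\|_{L^2(\G_1^d)}^2=\eta^{2-d}\bigl(\|\nabla\psi\|_{L^2(\R^d)}^2+O(\eta)\bigr).
\]
Because $\eta^2=\omega$, both $d\|w_\omega'\|_{L^2}^2+\omega\|w_\omega\|_{L^2}^2$ and its $O(\eta)$ correction are of order $\omega^{1-d/2}$, while $\|w_\omega\|_{L^p}^p$ is of order $\omega^{-d/2}$; inserting $v=w_\omega$ into the quotient above and simplifying the exponent gives exactly $\widetilde{\JJ}_{\G_1^d}(\omega)\le\wJ_{\omega,\G_1^d}(\widetilde{\pi}_\omega(w_\omega)w_\omega)\lesssim_{d,p}\omega^{s}$ for $\omega$ small. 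This closes the level estimate and hence \eqref{eq:uLpJ1}.

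Finally I would convert the level estimate into the four remaining bounds. The Nehari identity $d\|u'\|_{L^2}^2+\omega\|u\|_{L^2}^2=\|u\|_{L^p}^p$ together with $\|u\|_{L^p}^p\lesssim\omega^{s}$ gives the upper bounds $\|u'\|_{L^2}^2\lesssim\omega^{s}$ and, using $s-1=\frac{4-d(p-2)}{2(p-2)}$, $\|u\|_{L^2}^2\lesssim\omega^{s-1}$. Feeding these back into \eqref{eq:dgnGe}, namely $\omega^{s}\lesssim\|u\|_{L^p}^p\le K(\|u\|_{L^2}^2)^{a/2}(\|u'\|_{L^2}^2)^{b/2}$, and isolating one factor at a time (using the upper bound on $\|u\|_{L^2}^2$ to extract $\|u'\|_{L^2}^2\gtrsim\omega^{s}$, and the upper bound on $\|u'\|_{L^2}^2$ to extract $\|u\|_{L^2}^2\gtrsim\omega^{s-1}$) produces the lower bounds, the exponents matching through the identities $a=s(p-2)$ and $(2-b)/(p-2)=s-1$. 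This establishes \eqref{eq:u'J1}--\eqref{eq:uL2J1}. The main obstacle is the level upper bound: one must track the two competing scalings --- the macroscopic width $\eta^{-1}=\omega^{-1/2}$ of the profile and the passage from $\G_1^d$ to $\G_\eta^d$ --- and check that the $O(\eta)$ errors furnished by Lemma \ref{lem:restr} stay lower order, so that the constants in the level estimate are uniform for all small $\omega$; the remaining two--variable minimization and exponent bookkeeping are then routine.
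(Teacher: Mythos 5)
Your proposal is correct, and while it shares the paper's two pillars (restriction estimates as in Lemma \ref{lem:restr} and the $d$--dimensional Gagliardo--Nirenberg inequality \eqref{eq:dgnGe}), its architecture differs from the paper's in two ways. For the level upper bound, the paper combines the scaling of Remark \ref{rem:GetoG1} (with $\varepsilon=\sqrt\omega$) with Lemma \ref{lem:upest}, i.e. it restricts the actual ground state of $J_{\omega,\R^d}$ to the grid and projects it onto $\widetilde{\NN}_{\omega,\G_\varepsilon^d}$; you restrict an arbitrary fixed bump $\psi\in C_c^\infty(\R^d)$ scaled by $\sqrt\omega$, which rests on the same scaling and on Lemma \ref{lem:restr} but avoids invoking existence and regularity of the $\R^d$ ground state, at the harmless cost of a non--sharp constant (only the power $s=\f{2p-d(p-2)}{2(p-2)}$ matters here). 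The more substantial difference is in the lower bounds: the paper never proves a level lower bound, but instead bootstraps lower bounds on the norms directly from the Nehari constraint and \eqref{eq:dgnGe}, and this forces a case distinction between $p\in\left(2,2+\f4d\right)$ and $p\in\left[2+\f4d,2^*\right)$, according to the sign of $2-\left(\f p2-1\right)d$. You instead establish the two--sided level estimate $\widetilde{\JJ}_{\G_1^d}(\omega)\simeq\omega^{s}$ outright, deriving the lower bound from the Nehari quotient representation together with \eqref{eq:dgnGe} and a weighted AM--GM minimization in the two variables $\left(\|v'\|_{L^2(\G_1^d)}^2,\|v\|_{L^2(\G_1^d)}^2\right)$ (legitimate because the quotient is scale invariant, and valid for all $p\in(2,2^*)$ since $a=d+(2-d)\f p2>0$ and $b=\left(\f p2-1\right)d>0$ exactly in this range); the four remaining bounds then follow by substitution into the Nehari identity and \eqref{eq:dgnGe}, with the exponent identities $a=s(p-2)$ and $(2-b)=(s-1)(p-2)$ that you checked, and since every power applied to both sides of an inequality ($a/2$, $b/2$, $2/a$) is positive, no case split is needed. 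Your route is thus uniform in $p$ and gives, as a by--product, the explicit asymptotics of the ground state level itself; the paper's route is shorter on the upper--bound side because Lemma \ref{lem:upest} is already available (and is needed elsewhere with its sharp constant), but pays for it with the case analysis in the lower bounds.
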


\begin{proposition}
	\label{prop:estG1}
	For every $p\in\left(2,2+\f4d\right)$, there exists $\overline{m}>0$ and $C>0$, depending only on $p$ and $d$,  such that, for every $m\in(0,\overline{m})$, if $u\in H_m^1(\G_1^d)$ is a ground state of $\widetilde{E}_{\G_1^d}$ at mass $m$, then 
	\begin{align}
		\f1C m^{\f{2p-d(p-2)}{4-d(p-2)}}\leq &\,\|u'\|_{L^2(\G_1^d)}^2 \leq C m^{\f{2p-d(p-2)}{4-d(p-2)}}\label{eq:u'G1}\\
		\f1C m^{\f{2p-d(p-2)}{4-d(p-2)}}\leq &\,\|u\|_{L^p(\G_1^d)}^p \leq C m^{\f{2p-d(p-2)}{4-d(p-2)}}\,.\label{eq:uLpG1}
		\end{align} 
\end{proposition}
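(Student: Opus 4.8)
The exponent in the statement is exactly the one governing the $\R^d$ energy level in \eqref{eq:ERdm}: write $\theta:=\f{2p-d(p-2)}{4-d(p-2)}$, and set $a:=\f{(p-2)d}4$ and $\sigma:=\f{2d+(2-d)p}4$. Since $p<2+\f4d$ one has $a\in(0,1)$ and $p<2^*$, so the $d$--dimensional Gagliardo--Nirenberg inequality \eqref{eq:dgnGe} at $\varepsilon=1$, $q=p$ is available; a direct computation records the two algebraic identities $\f{\sigma}{1-a}=\theta$ and $\f{\theta-\sigma}a=\theta$ that drive the whole argument. The plan is to first pin down the two--sided estimate $\widetilde{\EE}_{\G_1^d}(m)\asymp -m^\theta$ on the ground state level, and then to read off \eqref{eq:u'G1}--\eqref{eq:uLpG1} purely algebraically from the energy identity together with \eqref{eq:dgnGe}. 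Notably, only the minimality $\widetilde{E}_{\G_1^d}(u)=\widetilde{\EE}_{\G_1^d}(m)$ is needed, not the Euler--Lagrange equation.

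For the lower bound on the level I would insert \eqref{eq:dgnGe} into $\widetilde{E}_{\G_1^d}$ to obtain, for every $u\in H_m^1(\G_1^d)$,
\[
\widetilde{E}_{\G_1^d}(u)\geq \f12\|u'\|_{L^2(\G_1^d)}^2-\f{K_{p,\G_1^d}}{dp}\,m^\sigma\|u'\|_{L^2(\G_1^d)}^{2a},
\]
and then minimise the scalar map $t\mapsto \f12 t-Bt^a$ with $B=\f{K_{p,\G_1^d}}{dp}m^\sigma$. As $a\in(0,1)$ its minimum is of order $-B^{1/(1-a)}=-C'm^{\sigma/(1-a)}=-C'm^\theta$, whence $\widetilde{\EE}_{\G_1^d}(m)\geq -C'm^\theta$.

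The main obstacle is the matching upper bound $\widetilde{\EE}_{\G_1^d}(m)\leq -c\,m^\theta$, which demands a genuinely $d$--dimensional competitor: a profile supported on a single line of $\G_1^d$ is forced to vanish at every lattice point by continuity, so it cannot spread and would yield the wrong (one--dimensional) scaling. Instead I would fix $\psi\in C_c^\infty(\R^d)$ with $\|\psi\|_{L^2(\R^d)}=1$ and test with $v_R:=A\,\psi(\cdot/R)\big|_{\G_1^d}$, choosing $A$ so that $\|v_R\|_{L^2(\G_1^d)}^2=m$. Rescaling the grid by $1/R$ (as in the proof of Lemma \ref{lem:GNeps}) identifies $\psi(\cdot/R)\big|_{\G_1^d}$ with the restriction of $\psi$ to $\G_{1/R}^d$, so Lemma \ref{lem:restr} with $\varepsilon=1/R\to0$ gives, as $R\to\infty$, that the restriction of $\psi(\cdot/R)$ has $L^2$--norm squared $dR^d(1+o(1))$, gradient $L^2$--norm squared $R^{d-2}\|\nabla\psi\|_{L^2(\R^d)}^2(1+o(1))$ and $L^p$--norm to the $p$ equal to $dR^d\|\psi\|_{L^p(\R^d)}^p(1+o(1))$. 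Substituting $A^2\sim m/(dR^d)$ then produces $\widetilde{E}_{\G_1^d}(v_R)=\f{c_1 m}{R^2}-c_2\,m^{p/2}R^{-d(p-2)/2}+\dots$, and optimising in $R$ forces $R=R(m)\to\infty$ (which legitimises the asymptotics, since $d(p-2)<4$) and yields level of order $-m^\theta$. The delicate point is purely bookkeeping: controlling the $o(1)$ errors so that $\widetilde{\EE}_{\G_1^d}(m)\leq -c\,m^\theta$ survives for all small $m$.

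Finally, for the norm estimates let $u$ be a ground state and set $K:=\|u'\|_{L^2(\G_1^d)}^2$, $P:=\|u\|_{L^p(\G_1^d)}^p$. From $\widetilde{E}_{\G_1^d}(u)=\widetilde{\EE}_{\G_1^d}(m)$ one gets $P=\f{dp}2 K+dp\,|\widetilde{\EE}_{\G_1^d}(m)|$, hence $P\geq\f{dp}2 K$ and, by the upper bound on the level, $P\geq dp\,c\,m^\theta$. Feeding $P\geq\f{dp}2K$ into \eqref{eq:dgnGe}, i.e. $P\leq K_{p,\G_1^d}m^\sigma K^a$, yields $K^{1-a}\lesssim m^\sigma$, so $K\leq Cm^{\sigma/(1-a)}=Cm^\theta$; then $P=\f{dp}2K+dp\,|\widetilde{\EE}_{\G_1^d}(m)|\leq C''m^\theta$ by the lower bound on the level, settling the upper estimates in \eqref{eq:u'G1}--\eqref{eq:uLpG1}. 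Reading the same inequality the other way, $K^a\geq P/(K_{p,\G_1^d}m^\sigma)\geq c\,m^{\theta-\sigma}$, gives $K\geq c'm^{(\theta-\sigma)/a}=c'm^\theta$ by the second identity, while $P\geq dp\,c\,m^\theta$ is already in hand. Choosing $\overline m$ small enough that the test--function step applies then concludes the proof.
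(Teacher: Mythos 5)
Your proof is correct, and its second half (extracting \eqref{eq:u'G1}--\eqref{eq:uLpG1} from the level bound via the energy identity and the Gagliardo--Nirenberg inequality \eqref{eq:dgnGe}) coincides with the paper's argument almost line by line: same use of negativity of the energy, same two algebraic identities. Where you genuinely diverge is in how the crucial upper bound $\widetilde{\EE}_{\G_1^d}(m)\leq -c\,m^{\theta}$ is obtained. The paper does not build a competitor on $\G_1^d$ directly: it invokes Lemma \ref{lem:upest} (restriction of the actual $\R^d$ ground state to $\G_\varepsilon^d$, giving $\varepsilon^{d-1}\widetilde{\EE}_{\G_\varepsilon^d}\bigl(\tfrac{d}{\varepsilon^{d-1}}\mu\bigr)\leq\EE_{\R^d}(\mu)+C\varepsilon$) and then transports this estimate to $\G_1^d$ through the scaling of Remark \ref{rem:GetoG1}, reading off the power $m^{\theta}$ from the known homogeneity \eqref{eq:ERdm} of $\EE_{\R^d}$. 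You instead test directly on $\G_1^d$ with the two-parameter family $A\,\psi(\cdot/R)$ of truncated bumps and optimize in $R$; your dilation $R$ plays exactly the role of $1/\varepsilon$ in the paper's scaling, and both routes control the errors through Lemma \ref{lem:restr}, so the mechanism is morally the same rescaled-profile restriction. Your route is more self-contained — it needs neither existence/regularity of the $\R^d$ ground state nor the scaling law \eqref{eq:ERdm}, since $\theta$ emerges from the optimization — at the cost of losing the quantitative comparison with $\EE_{\R^d}(\mu)$, which is irrelevant for this proposition but is what the paper's Lemma \ref{lem:upest} is also serving elsewhere (Theorem \ref{thm:2dsquare}(i)). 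A second, minor difference: you additionally prove the level lower bound $\widetilde{\EE}_{\G_1^d}(m)\geq -C'm^{\theta}$ (inserting \eqref{eq:dgnGe} and minimizing $t\mapsto\tfrac12 t-Bt^{a}$) and use it for the upper bound on $\|u\|_{L^p(\G_1^d)}^p$; the paper avoids this step entirely, obtaining that bound from \eqref{eq:dgnGe} together with the already-established upper bound on $\|u'\|_{L^2(\G_1^d)}^2$. Both are valid; yours costs one extra elementary estimate, the paper's is slightly leaner.
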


The proof of Propositions \ref{prop:JG1}--\ref{prop:estG1} makes use of the following preliminary estimates.
\begin{lemma}
	\label{lem:upest}
	For every $p\in(2,2^*)$ and $\omega>0$, there exists $C>0$, depending only on $p$, $\omega$ and $d$, such that
	\[
	\varepsilon^{d-1}\widetilde{\JJ}_{\G_\varepsilon^d}(\omega)\leq \JJ_{\R^d}(\omega)+ C\varepsilon\qquad\text{as }\varepsilon\to0\,.
	\]
	Furthermore, for every $p\in\left(2,2+\f4d\right)$ and $\mu>0$, there exists $C'>0$, depending only on $p$, $\mu$ and $d$, such that
	\[
	\varepsilon^{d-1}\widetilde{\EE}_{\G_\varepsilon^d}\left(\f d{\varepsilon^{d-1}}\mu\right)\leq \EE_{\R^d}(\mu)+ C'\varepsilon\qquad\text{as }\varepsilon\to0\,.
	\]
\end{lemma}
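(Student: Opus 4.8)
The plan is to manufacture, out of the ground states in $\R^d$, explicit competitors on $\G_\varepsilon^d$ whose scaled functional value exceeds the corresponding $\R^d$ level by at most $O(\varepsilon)$; since the grid ground state levels are infima, this immediately yields the two one--sided bounds. Throughout I would use that, for fixed $\omega>0$ and $\mu>0$, the positive ground states $\varphi_\omega\in\NN_{\omega,\R^d}$ and $\phi_\mu\in H_\mu^1(\R^d)$ are smooth, bounded and exponentially decaying (see e.g. \cite{cazenave}); in particular they belong to $C^1(\R^d)\cap H^2(\R^d)\cap L^\infty(\R^d)$ and therefore satisfy the hypotheses of Lemma \ref{lem:restr}, with constants depending only on the fixed datum $\varphi_\omega$ (resp. $\phi_\mu$), hence only on $p,\omega,d$ (resp. $p,\mu,d$).

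For the action, I would restrict $\varphi_\omega$ to $\G_\varepsilon^d$, call the restriction $u_\varepsilon$, and project it onto $\widetilde{\NN}_{\omega,\G_\varepsilon^d}$ by $\widetilde\pi_\omega(u_\varepsilon)u_\varepsilon$. Using the Nehari identity \eqref{eq:JsuNG} together with the explicit form of $\widetilde\pi_\omega$, the value $\wJ_{\omega,\G_\varepsilon^d}(\widetilde\pi_\omega(u_\varepsilon)u_\varepsilon)$ reduces to
\[
\f{\kappa}{d}\,\f{\bigl(d\|u_\varepsilon'\|_{L^2(\G_\varepsilon^d)}^2+\omega\|u_\varepsilon\|_{L^2(\G_\varepsilon^d)}^2\bigr)^{\f{p}{p-2}}}{\bigl(\|u_\varepsilon\|_{L^p(\G_\varepsilon^d)}^p\bigr)^{\f{2}{p-2}}},\qquad\kappa=\f12-\f1p.
\]
Multiplying by $\varepsilon^{d-1}$ and distributing it as $\varepsilon^{d-1}=(\varepsilon^{d-1})^{\f{p}{p-2}}(\varepsilon^{d-1})^{-\f{2}{p-2}}$ (the exponents sum to $1$), Lemma \ref{lem:restr} turns the numerator base into $d\bigl(\|\nabla\varphi_\omega\|_{L^2(\R^d)}^2+\omega\|\varphi_\omega\|_{L^2(\R^d)}^2\bigr)+O(\varepsilon)$ and the denominator base into $d\|\varphi_\omega\|_{L^p(\R^d)}^p+O(\varepsilon)$. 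The Nehari identity on $\R^d$ makes both bases equal to $d\|\varphi_\omega\|_{L^p(\R^d)}^p+O(\varepsilon)$, so the whole quotient collapses to $d\|\varphi_\omega\|_{L^p(\R^d)}^p+O(\varepsilon)$, giving $\varepsilon^{d-1}\wJ_{\omega,\G_\varepsilon^d}(\widetilde\pi_\omega(u_\varepsilon)u_\varepsilon)\le\kappa\|\varphi_\omega\|_{L^p(\R^d)}^p+C\varepsilon=\JJ_{\R^d}(\omega)+C\varepsilon$, and the left-hand side bounds $\varepsilon^{d-1}\widetilde\JJ_{\G_\varepsilon^d}(\omega)$ from above.

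For the energy I would proceed analogously: restrict $\phi_\mu$ to obtain $u_\varepsilon$ and rescale it by the scalar $\theta_\varepsilon=\left(\f{d\mu}{\varepsilon^{d-1}\|u_\varepsilon\|_{L^2(\G_\varepsilon^d)}^2}\right)^{1/2}$, so that $\theta_\varepsilon u_\varepsilon\in H_{\f d{\varepsilon^{d-1}}\mu}^1(\G_\varepsilon^d)$. Lemma \ref{lem:restr} with $q=2$ gives $\f{\varepsilon^{d-1}}{d}\|u_\varepsilon\|_{L^2(\G_\varepsilon^d)}^2=\mu+O(\varepsilon)$, whence $\theta_\varepsilon^2=1+O(\varepsilon)$ and $\theta_\varepsilon^p=1+O(\varepsilon)$. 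Multiplying $\widetilde E_{\G_\varepsilon^d}(\theta_\varepsilon u_\varepsilon)$ by $\varepsilon^{d-1}$ and applying Lemma \ref{lem:restr} to the gradient and $L^p$ terms then yields $\varepsilon^{d-1}\widetilde E_{\G_\varepsilon^d}(\theta_\varepsilon u_\varepsilon)=\f12\|\nabla\phi_\mu\|_{L^2(\R^d)}^2-\f1p\|\phi_\mu\|_{L^p(\R^d)}^p+O(\varepsilon)=E_{\R^d}(\phi_\mu)+O(\varepsilon)=\EE_{\R^d}(\mu)+O(\varepsilon)$, which is the second bound.

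The only genuinely delicate point is the propagation of the $O(\varepsilon)$ errors through the nonlinear power--and--quotient dependence in the action computation and through $\theta_\varepsilon$ in the energy computation. This is harmless as long as every base stays bounded and bounded away from $0$ as $\varepsilon\to0$: the limiting $\R^d$ quantities $\|\nabla\varphi_\omega\|_{L^2(\R^d)}^2+\omega\|\varphi_\omega\|_{L^2(\R^d)}^2=\|\varphi_\omega\|_{L^p(\R^d)}^p$ and $\mu=\|\phi_\mu\|_{L^2(\R^d)}^2$ are strictly positive, so for small $\varepsilon$ the perturbed quantities lie in a fixed compact subset of $(0,\infty)$ on which $x\mapsto x^{\f{p}{p-2}}$, $x\mapsto x^{-\f{2}{p-2}}$ and $x\mapsto x^{\f p2}$ are Lipschitz, turning each $O(\varepsilon)$ input into an $O(\varepsilon)$ output and thereby closing both estimates.
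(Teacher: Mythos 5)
Your proposal is correct and follows essentially the same route as the paper: restrict the (smooth, $H^2\cap L^\infty$) ground states $\varphi_\omega$, $\phi_\mu$ to $\G_\varepsilon^d$, project via $\widetilde\pi_\omega$ (resp. rescale to the prescribed mass), and convert all norms with Lemma \ref{lem:restr}, using the Nehari identity on $\R^d$ and Lipschitz continuity of the power functions away from zero to propagate the $O(\varepsilon)$ errors. The only difference is cosmetic bookkeeping in how $\varepsilon^{d-1}$ is distributed through the quotient defining $\widetilde\pi_\omega^p(u_\varepsilon)\|u_\varepsilon\|_{L^p(\G_\varepsilon^d)}^p$.
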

\begin{proof}
	It is a direct consequence of Lemma \ref{lem:restr} applied to the ground states of $J_{\omega,\R^d}$ and $E_{\R_d}$. 
	
	If $u\in\NN_{\omega,\R^d}$ is a ground state of $J_{\omega,\R^d}$, which is well--known to be in $C^1(\R^d)\cap H^2(\R^d)\cap L^\infty(\R^d)$, letting $u_\varepsilon\in H^1(\G_\varepsilon^d)$ be its restriction to $\G_\varepsilon^d$ and $v_\varepsilon:=\widetilde{\pi}_\omega(u_\varepsilon)u_\varepsilon$, we have $v_\varepsilon\in\widetilde{\NN}_{\omega,\G_\varepsilon^d}$ and, by Lemma \ref{lem:restr},
	\[
	\begin{split}
	\varepsilon^{d-1}\widetilde{\JJ}_{\G_\varepsilon^d}(\omega)\leq&\varepsilon^{d-1} \widetilde{J}_{\omega,\G_\varepsilon}(v_\varepsilon)=\kappa\f{\varepsilon^{d-1}}d\|v_\varepsilon\|_{L^p(\G_\varepsilon^d)}^p=\kappa\f{\varepsilon^{d-1}}d\widetilde{\pi}_{\omega}^{p}(u_\varepsilon)\|u_\varepsilon\|_{L^p(\G_\varepsilon^d)}^p\\
	\leq&\kappa\left(\f{\|\nabla u\|_{L^2(\R^2)}^2+\omega\|u\|_{L^2(\R^d)}^2+C\varepsilon}{\|u\|_{L^p(\R^d)}^p-C\varepsilon}\right)^{\f p{p-2}}\left(\|u\|_{L^p(\R^d)}^p+C\varepsilon\right)\\
	\leq &\kappa(1+C\varepsilon)\|u\|_{L^p(\R^d)}^p+C\varepsilon=\JJ_{\R^d}(\omega) +C\varepsilon
	\end{split}
	\]
	for suitable $C>0$ (not relabeled) and every $\varepsilon$ small enough. 
	
	Analogously, if $u\in \Hmu(\R^d)$ is a ground state of $E_{\R^d}$ at mass $\mu$, letting $u_\varepsilon\in H^1(\G_\varepsilon^d)$ be its restriction to $\G_\varepsilon^d$ and $v_\varepsilon:=\sqrt{\f d{\varepsilon^{d-1}\|u_\varepsilon\|_{L^2(\G_\varepsilon^d)}^2}\mu}\,u_\varepsilon$, we obtain $v_\varepsilon\in H_{\f d{\varepsilon^{d-1}}\mu}^1(\G_\varepsilon^d)$ and
	\[
	\varepsilon^{d-1}\widetilde{\EE}_{\G_\varepsilon^d}\left(\f d{\varepsilon^{d-1}}\mu\right)\leq \varepsilon^{d-1}\widetilde{E}_{\G_\varepsilon^d}(v_\varepsilon)\leq E_{\R^d}(u)+C\varepsilon=\EE_{\R^d}(\mu)+C\varepsilon
	\]
	for sufficiently small $\varepsilon$.
\end{proof}

\begin{proof}[Proof of Proposition \ref{prop:JG1}]
	Note first that, by \eqref{eq:scalJ} and Lemma \ref{lem:upest}, for every $\omega>0$ small enough
	\[
	\omega^{\f12\left(d-3-\f1\beta\right)}\widetilde{\JJ}_{\G_1^d}(\omega)=\omega^{\f{d-1}2}\widetilde{\JJ}_{\G_{\sqrt{\omega}}^d}(1)\leq \JJ_{\R^d}(1)+o(1)\,,
	\]
	so that
	\[
		\widetilde{\JJ}_{\G_1^d}(\omega)\lesssim_{d,p} \omega^{-\f12\left(d-3-\f1\beta\right)}=\omega^{\f{2p-d(p-2)}{2(p-2)}}\,.
	\]
If $u\in\widetilde{\NN}_{\omega,\G_1^d}$ is a ground state of $\wJ_{\omega,\G_1^d}$, this immediately proves the upper bound in \eqref{eq:uLpJ1} by \eqref{eq:JsuNG}, and by definition of $\widetilde{\NN}_{\omega,\G_1^d}$
\[
\begin{split}
	&\|u'\|_{L^2(\G_1^d)}^2<\f{d\|u'\|_{L^2(\G_1^d)}^2+\omega\|u\|_{L^2(\G_1^d)}^2}d=\f{\|u\|_{L^p(\G_1^d)}^p}d\lesssim_{d,p}\omega^{\f{2p-d(p-2)}{2(p-2)}}\\
	&\|u\|_{L^2(\G_1^d)}^2\leq \|u\|_{L^2(\G_1^d)}^2+\f{d\|u'\|_{L^2(\G_1^d)}^2}\omega=\f{\|u\|_{L^p(\G_1^d)}^p}\omega\lesssim_{d,p}\omega^{\f{4-d(p-2)}{2(p-2)}}\,,
\end{split}
\]
i.e. the upper bounds in \eqref{eq:u'J1} and \eqref{eq:uL2J1}. 

To prove the lower bounds, let us distinguish the cases $p\in\left(2,2+\f4d\right)$ and $p\in\left[2+\f4d,2^*\right)$. If $p\in\left(2,2+\f4d\right)$, by $u\in\widetilde{\NN}_{\omega,\G_1^d}$ and the $d$--dimensional Gagliardo--Nirenberg inequality \eqref{eq:dgnGe} 
\begin{equation}
	\label{dis1}
\|u'\|_{L^2(\G_1^d)}^2\leq \f{\|u\|_{L^p(\G_1^d)}^{p}}d\lesssim_{d,p}\|u\|_{L^2(\G_1^d)}^{d+(2-d)\f p2}\|u'\|_{L^2(\G_1^d)}^{\left(\f p2-1\right)d}\,,
\end{equation}
that is
\begin{equation}
	\label{mass1}
	\|u'\|_{L^2(\G_1^d)}^{\left(\f p2-1\right)d}\lesssim_{d,p}\|u\|_{L^2(\G_1^d)}^{\f{(p-2)d}{4-d(p-2)}\f{2p-d(p-2)}{2}}\,.
\end{equation}
Coupling \eqref{mass1} again with the definition of Nehari manifold and \eqref{eq:dgnGe} then yields
\[
\omega\|u\|_{L^2(\G_1^d)}^2\leq\|u\|_{L^1(\G_1^d)}^p\lesssim_{d,p}\|u\|_{L^2(\G_1^d)}^{d+(2-d)\f p2}\|u'\|_{L^2(\G_1^d)}^{\left(\f p2-1\right)d}\lesssim_{d,p}\|u\|_{L^2(\G_1^d)}^{d+(2-d)\f p2+\f{(p-2)d}{4-d(p-2)}\f{2p-d(p-2)}{2}}\,,
\]
that is the lower bound in \eqref{eq:uL2J1}. As a consequence, 
\[
\|u\|_{L^p(\G_1^d)}^p\geq\omega\|u\|_{L^2(\G_1^d)}^2\geq\omega^{\f{2p-d(p-2)}{2(p-2)}}
\]
and thus, together with \eqref{eq:dgnGe} and the upper bound in \eqref{eq:uL2J1},
\[
\omega^{\f{2p-d(p-2)}{2(p-2)}}\lesssim_{d,p}\|u\|_{L^2(\G_1^d)}^{d+(2-d)\f p2}\|u'\|_{L^2(\G_1^d)}^{\left(\f p2-1\right)d}\lesssim_{d,p}\omega^{\f{d+(2-d)\f p2}2\f{4-d(p-2)}{2(p-2)}}\|u'\|_{L^2(\G_1^d)}^{\left(\f p2-1\right)d}\,,
\]
completing the proof of the lower bounds in \eqref{eq:uLpJ1} and \eqref{eq:u'J1} respectively.

If $p\in\left[2+\f4d,2^*\right)$, then $\left(\f p2-1\right)d\geq2$, so that \eqref{dis1} and the upper bound in \eqref{eq:u'J1} imply
\[
\|u\|_{L^2(\G_1^d)}^{d+(2-d)\f p2}\gtrsim_{d,p} \|u'\|_{L^2(\G_1^d)}^{2-\left(\f p2-1\right)d}\gtrsim_{d,p} \omega^{\left(2-\left(\f p2-1\right)d\right)\f{2p-d(p-2)}{2(p-2)}}\,,
\]
which is once again the lower bound in \eqref{eq:uL2J1}. Arguing as above provides the lower bounds in \eqref{eq:u'J1} and \eqref{eq:uLpJ1} and concludes the proof.
\end{proof}

\begin{proof}[Proof of Proposition \ref{prop:estG1}]
	By \eqref{Etildeeps1} and Lemma \ref{lem:upest},
	\[
	m^{\f{\beta(d-3)-1}{1+\beta(1-d)}}\widetilde{\EE}_{\G_1^d}(m)=m^{(d-1)\f{\beta}{1+\beta(1-d)}}\widetilde{\EE}_{\G_{m^{\beta/(1+\beta(1-d))}}^d}\left(\f1{m^{(d-1)\f{\beta}{1+\beta(1-d)}}}\right)\leq\EE_{\R^d}\left(\f1d\right)+o(1)
	\]
	for sufficiently small $m>0$, so that (recall $\beta$ is as in \eqref{eq:scaling})
	\[
	\widetilde{\EE}_{\G_1^d}(m)\lesssim_{d,p}-m^{\f{2p-d(p-2)}{4-d(p-2)}}\,.
	\]
	If $u\in H_m^1(\G_1^d)$ is a ground state of $\widetilde{E}_{\G_1^d}$ at mass $m$, this means 
	\[
	\|u\|_{L^p(\G_1^d)}^p=\f{dp}2\|u'\|_{L^2(\G_1^d)}^2-dp\widetilde{\EE}_{\G_1^d}(m)\gtrsim_{d,p} m^{\f{2p-d(p-2)}{4-d(p-2)}}\,,
	\]
	i.e. the lower bound in \eqref{eq:uLpG1}, and coupling with the $d$--dimensional Gagliardo--Nirenberg inequality \eqref{eq:dgnGe} leads to
	\[
	m^{\f{2p-d(p-2)}{4-d(p-2)}}\lesssim_{d,p}m^{\f d2+(2-d)\f p4}\|u'\|_{L^2(\G_1^d)}^{\left(\f p2-1\right)d}\,,
	\]
	that is the lower bound in \eqref{eq:u'G1}. Moreover, the negativity of $\widetilde{E}_{\G_1^d}(u)$ and \eqref{eq:dgnGe} imply
	\[
	\|u'\|_{L^2(\G_1^d)}^2\leq\f2{dp}\|u\|_{L^p(\G_1^d)}^p\lesssim_{d,p}m^{\f d2+(2-d)\f p4}\|u'\|_{L^2(\G_1^d)}^{\left(\f p2-1\right)d}
	\]
	and rearranging terms yields the upper bound in \eqref{eq:u'G1}, that combined with \eqref{eq:dgnGe} again gives also the upper bound in \eqref{eq:uLpG1}.
\end{proof}
\begin{proof}[Proof of Theorems \ref{thm:stimeJe}--\ref{thm:stimeGe}]
	The desired estimates follow immediately by the combination of those in Propositions \ref{prop:JG1}--\ref{prop:estG1} and the scaling argument described in Remark \ref{rem:GetoG1}.
\end{proof}

\section{Convergence of ground states: proof of Theorems \ref{thm:action}--\ref{thm:2dsquare}  and Proposition \ref{prop:mult}}
\label{sec:proof}

Prior to prove our convergence results for ground states, we need the following lemma.
\begin{lemma}
	\label{lem:u->Au}
	Let $d\geq 2$ and $p\in(2,2^*)$ be fixed and, for every $\varepsilon>0$, let $u_\varepsilon\in H^1(\G_\varepsilon^d)$ be such that
	\begin{equation}
	\label{bound}
	\f 1C\leq\varepsilon^{d-1}\|u_\varepsilon\|_{L^2(\G_\varepsilon^d)}^2,\,\varepsilon^{d-1}\|u_\varepsilon'\|_{L^2(\G_\varepsilon^d)}^2,\,\varepsilon^{d-1}\|u_\varepsilon\|_{L^p(\G_\varepsilon^d)}^p\leq C
	\end{equation}
	for a suitable $C>0$ independent of $\varepsilon$. Then as $\varepsilon\to0$
	\begin{itemize}
		\item[(i)] there exists $C'>0$, independent of $\varepsilon$, such that
		\begin{equation}
		\label{eq:uAuL2}
		\left|\f{\varepsilon^{d-1}}d\|u_\varepsilon\|_{L^2(\G_\varepsilon^d)}^2-\|\mathcal{A}u_\varepsilon\|_{L^2(\R^d)}^2\right|\leq C'\varepsilon\,;
		\end{equation}
		
		\item[(ii)] if $d=2$ and $p>2$, or $d\geq3$ and $p\in\left(2,\f{2^*}2+1\right]$, there exists $C''>0$, depending only on $d$ and $p$, such that
		\begin{equation}
		\label{eq:uAuLp1}
		\left|\f{\varepsilon^{d-1}}d\|u_\varepsilon\|_{L^p(\G_\varepsilon^d)}^p-\|\mathcal{A}u_\varepsilon\|_{L^p(\R^d)}^p\right|\leq C''\varepsilon\,;
		\end{equation} 
		
		\item[(iii)] if $d\geq3$ and $p\in\left(\f{2^*}2+1,2^*\right)$, then for every $\gamma>0$ there exists $C'''>0$, depending only on $d$, $p$ and $\gamma$, such that
		\begin{equation}
		\label{eq:uAuLp2}
		\left|\f{\varepsilon^{d-1}}d\|u_\varepsilon\|_{L^p(\G_\varepsilon^d)}^p-\|\mathcal{A}u_\varepsilon\|_{L^p(\R^d)}^p\right|\leq C'''\varepsilon^{\f{d-2}2(2^*-p)-\gamma}\,.
		\end{equation} 
	\end{itemize}
\end{lemma}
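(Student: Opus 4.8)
The plan is to compare each of the two norms with the common \emph{vertex quadrature}
\[
\Sigma_q(u_\varepsilon):=\varepsilon^{d}\sum_{\v\in\mathbb{V}_{\G_\varepsilon^d}}|u_\varepsilon(\v)|^{q},
\]
taking $q=2$ for item (i) and $q=p$ for items (ii)--(iii). The structural point on which the whole argument rests is that $\Sigma_q(u_\varepsilon)$ is \emph{exactly} the leading term of both $\tfrac{\varepsilon^{d-1}}{d}\|u_\varepsilon\|_{L^q(\G_\varepsilon^d)}^{q}$ and $\|\mathcal{A}u_\varepsilon\|_{L^q(\R^d)}^{q}$, so that by the triangle inequality it suffices to bound two quadrature errors and no mismatch survives at order $O(1)$. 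On the grid, since each vertex is the smaller endpoint (in the convention of Lemma \ref{lem:uutLq}) of exactly $d$ edges, one has $\tfrac{\varepsilon^{d-1}}{d}\,\varepsilon\sum_{e}|u_\varepsilon(\v_1)|^{q}=\Sigma_q(u_\varepsilon)$. On $\R^d$, every $S_{k,\sigma}$ has volume $\varepsilon^{d}/d!$ and the equal-weight vertex rule $\tfrac{|S_{k,\sigma}|}{d+1}\sum_{\v\in S_{k,\sigma}}|u_\varepsilon(\v)|^{q}$ is exact on affine integrands; a counting argument showing that each vertex belongs to exactly $(d+1)\,d!$ simplices (summing $\sum_{m=0}^{d}\binom{d}{m}m!(d-m)!$ over the $2^{d}$ incident cubes) then gives $\sum_{k,\sigma}\tfrac{|S_{k,\sigma}|}{d+1}\sum_{\v\in S_{k,\sigma}}|u_\varepsilon(\v)|^{q}=\Sigma_q(u_\varepsilon)$.

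First I would estimate the grid error. Exactly as in \eqref{eq21}, the fundamental theorem of calculus along each edge gives $\big|\|u_\varepsilon\|_{L^q(\G_\varepsilon^d)}^{q}-\varepsilon\sum_{e}|u_\varepsilon(\v_1)|^{q}\big|\lesssim_{q}\varepsilon\|u_\varepsilon\|_{L^{2(q-1)}(\G_\varepsilon^d)}^{q-1}\|u_\varepsilon'\|_{L^2(\G_\varepsilon^d)}$, hence
\[
\Big|\tfrac{\varepsilon^{d-1}}{d}\|u_\varepsilon\|_{L^q(\G_\varepsilon^d)}^{q}-\Sigma_q(u_\varepsilon)\Big|\lesssim_{q}\tfrac{\varepsilon^{d}}{d}\|u_\varepsilon\|_{L^{2(q-1)}(\G_\varepsilon^d)}^{q-1}\|u_\varepsilon'\|_{L^2(\G_\varepsilon^d)}.
\]
For the extension error, on each simplex $|\mathcal{A}u_\varepsilon|^{q}\in W^{1,1}(S_{k,\sigma})$, and a first--order (Bramble--Hilbert type) quadrature estimate, available because the vertex rule integrates affine functions exactly, yields $\big|\int_{S_{k,\sigma}}|\mathcal{A}u_\varepsilon|^{q}-\tfrac{|S_{k,\sigma}|}{d+1}\sum_{\v}|u_\varepsilon(\v)|^{q}\big|\lesssim_{d}\varepsilon\int_{S_{k,\sigma}}\big|\nabla(|\mathcal{A}u_\varepsilon|^{q})\big|$. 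Summing over $k,\sigma$ and applying H\"older,
\[
\big|\|\mathcal{A}u_\varepsilon\|_{L^q(\R^d)}^{q}-\Sigma_q(u_\varepsilon)\big|\lesssim_{d,q}\varepsilon\,\|\mathcal{A}u_\varepsilon\|_{L^{2(q-1)}(\R^d)}^{q-1}\|\nabla\mathcal{A}u_\varepsilon\|_{L^2(\R^d)}.
\]

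It then remains to bound these two right--hand sides under the hypotheses \eqref{bound}. When $2(q-1)\le 2^{*}$ — which always holds in (i), where $q=2$, and in (ii), where $q=p\le\tfrac{2^{*}}{2}+1$ — I would control the grid factor $\|u_\varepsilon\|_{L^{2(q-1)}(\G_\varepsilon^d)}^{q-1}$ by the $d$--dimensional Gagliardo--Nirenberg inequality \eqref{eq:dgnGe} together with Lemma \ref{lem:GNeps} (directly by \eqref{bound} in the trivial case $q=2$), and the extension factor $\|\mathcal{A}u_\varepsilon\|_{L^{2(q-1)}(\R^d)}^{q-1}$ by inequality \eqref{eq:GNd} on $\R^d$, having first observed through \eqref{eq:AL2}--\eqref{eq:gradAL2} and \eqref{bound} that $\|\mathcal{A}u_\varepsilon\|_{L^2(\R^d)}$ and $\|\nabla\mathcal{A}u_\varepsilon\|_{L^2(\R^d)}$ stay bounded. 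Feeding \eqref{bound} into the resulting expressions, a routine bookkeeping of the powers of $\varepsilon$ renders both errors $O(\varepsilon)$, which proves (i) and (ii). When instead $2(p-1)>2^{*}$, as in (iii), the $d$--dimensional inequality is no longer available, and I would replace it by the interpolating inequality of Proposition \ref{prop:GNint} on the grid and by \eqref{eq:ALq} for the extension, both applied with a parameter $\alpha\in(0,2]$.

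The heart of the matter, and the origin of the rate in (iii), is this last step. Inserting \eqref{eq:GNint} with exponent $2(p-1)$ into the grid error and using \eqref{bound}, a collection of exponents gives a bound of order $\varepsilon^{\,d+\frac{(2-d)p}{2}-\frac{\alpha}{2}}$; since $d+\tfrac{(2-d)p}{2}=\tfrac{d-2}{2}(2^{*}-p)$, letting $\alpha\downarrow 0$ produces exactly the exponent $\tfrac{d-2}{2}(2^{*}-p)$, the loss $\tfrac{\alpha}{2}=\gamma$ being unavoidable because the constant in Proposition \ref{prop:GNint} degenerates as $\alpha\to 0$. The extension error is treated identically, estimating $\|\mathcal{A}u_\varepsilon\|_{L^{2(p-1)}(\R^d)}$ first by \eqref{eq:ALq} and then by Proposition \ref{prop:GNint}, and produces the same power of $\varepsilon$. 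Accordingly, the two points that demand care are the exact combinatorial identities that make both quadratures coincide with $\Sigma_q(u_\varepsilon)$ and the optimisation in $\alpha$ delivering the sharp supercritical rate; the remainder is bookkeeping with the a priori bounds \eqref{bound}.
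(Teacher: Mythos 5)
Your strategy is viable and it is a genuinely different route from the paper's. The paper never compares $\|\mathcal{A}u_\varepsilon\|_{L^q(\R^d)}^q$ with a vertex sum directly: it inserts the restriction $\widetilde u_\varepsilon$ of $\mathcal{A}u_\varepsilon$ to $\G_\varepsilon^d$ as intermediate object, using Lemmas \ref{lem:uutL2}--\ref{lem:uutLq} to compare $u_\varepsilon$ with $\widetilde u_\varepsilon$ on the grid, and the restriction estimate \eqref{stimachiave}, applied to $\mathcal{A}u_\varepsilon$, to compare $\f{\varepsilon^{d-1}}d\|\widetilde u_\varepsilon\|_{L^q(\G_\varepsilon^d)}^q$ with $\|\mathcal{A}u_\varepsilon\|_{L^q(\R^d)}^q$. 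Your symmetric reduction to the vertex quadrature $\Sigma_q$ is structurally cleaner, and the pieces you can check are correct: each vertex of $\G_\varepsilon^d$ is indeed the smaller endpoint of exactly $d$ edges, each vertex belongs to exactly $(d+1)\,d!$ simplices (your count $\sum_{m=0}^d\binom{d}{m}m!(d-m)!=(d+1)d!$ is right), so both quadratures do collapse to $\Sigma_q$; the grid-side error is the paper's own \eqref{eq21}; and the $\varepsilon$-bookkeeping — \eqref{eq:dgnGe} with Lemma \ref{lem:GNeps} in cases (i)--(ii), Proposition \ref{prop:GNint} and \eqref{eq:ALq} with $\gamma=\alpha/2$ in case (iii), together with \eqref{eq:AL2}--\eqref{eq:gradAL2} — reproduces exactly the stated rates, including the identity $d+\f{(2-d)p}2=\f{d-2}2(2^*-p)$.

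The genuine gap is the per-simplex quadrature estimate, which is the one new analytic ingredient your route needs and which you justify by a ``Bramble--Hilbert type'' argument. In dimension $d\geq2$, exactness of the vertex rule on affine functions plus $W^{1,1}$ regularity of the integrand is \emph{not} enough: point values are not controlled by the $W^{1,1}(S_{k,\sigma})$ norm, and the asserted bound
\[
\Bigl|\int_{S_{k,\sigma}}f\,dx-\f{|S_{k,\sigma}|}{d+1}\sum_{\v\in S_{k,\sigma}}f(\v)\Bigr|\lesssim_{d,q}\varepsilon\int_{S_{k,\sigma}}|\nabla f|\,dx
\]
is false for general Lipschitz $f$: a spike of height $h$ supported in a ball of radius $r$ centred at a vertex contributes $\sim\varepsilon^d h$ to the quadrature while $\int|\nabla f|\sim hr^{d-1}$, so taking $r\ll\varepsilon$ destroys the inequality. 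The bound is nevertheless true for your specific integrands $f=|\mathcal{A}u_\varepsilon|^q$, but only because $\ell:=\mathcal{A}u_\varepsilon$ is \emph{affine} on $S_{k,\sigma}$, and this requires an argument you have not given. One way to supply it: rescale to the unit Kuhn simplex $S$ and, by $q$-homogeneity, reduce to $\ell=a\cdot x+b$ with $|a|=1$; both the integral mean and the vertex mean of $|\ell|^q$ lie between $\min_S|\ell|^q$ and $\max_S|\ell|^q$, so the error is at most $|S|$ times the oscillation of $|\ell|^q$ over $S$. For $|b|\geq2\sqrt d$ this oscillation is $\lesssim_{q}|b|^{q-1}$ while $\int_S|\ell|^{q-1}\gtrsim_{d,q}|b|^{q-1}$; for $|b|\leq2\sqrt d$ the error is bounded while $(a,b)\mapsto\int_S|a\cdot x+b|^{q-1}\,dx$ is continuous and strictly positive on a compact set, hence bounded below. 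This yields the estimate with a constant depending on $d$ and $q$ (note: it cannot be independent of $q$, as you wrote). Until this step is proved, the extension-side half of your argument is unsupported; with it, your proof closes.
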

\begin{proof}
	Recall that, as in Section \ref{sec:prel}, we always denote by $\widetilde{u}_\varepsilon$ the restriction of $\mathcal{A}u_\varepsilon$ to $\G_\varepsilon^d$. By \eqref{bound} and Lemma \ref{lem:uutL2} we obtain immediately
	\begin{equation}
	\label{pezzo1}
	\left|\f{\varepsilon^{d-1}}d\|u_\varepsilon\|_{L^2(\G_\varepsilon^d)}^2-\f{\varepsilon^{d-1}}d\|\widetilde{u}_\varepsilon\|_{L^2(\G_\varepsilon^d)}^2\right|\lesssim \varepsilon
	\end{equation}
	for sufficiently small $\varepsilon$. Moreover, by \eqref{bound} and Lemma \ref{lem:normA} it follows that $\left(\mathcal{A}u_\varepsilon\right)_\varepsilon$ is uniformly bounded in $H^1(\R^d)$, so that arguing as in Part 1 of the proof of Lemma \ref{lem:restr} yields
	\[
	\left|\f{\varepsilon^{d-1}}d\|\widetilde{u}_\varepsilon\|_{L^2(\G_\varepsilon^d)}^2-\|\mathcal{A}u_\varepsilon\|_{L^2(\R^d)}^2\right|\lesssim \varepsilon
	\]
	as $\varepsilon\to0$. Coupling with \eqref{pezzo1} proves \eqref{eq:uAuL2}.

	Observe that, applying to $\mathcal{A}u_\varepsilon$ the argument in Part 1 of the proof of Lemma \ref{lem:restr} up to \eqref{stimachiave}, we have
	\begin{equation}
	\label{utAuLp}
	\left|\f{\varepsilon^{d-1}}d\|\widetilde{u}_\varepsilon\|_{L^p(\G_\varepsilon^d)}^p-\|\mathcal{A}u_\varepsilon\|_{L^p(\R^d)}^p\right|\lesssim_p \varepsilon\|\mathcal{A}u_\varepsilon\|_{L^{2(p-1)}(\R^d)}^{p-1}\|\nabla\mathcal{A} u_\varepsilon\|_{L^2(\R^d)}\,.
	\end{equation}
	If $d=2$ and $p>2$, or $d\geq3$ and $p\in\left(2,\f{2^*}2+1\right]$, the boundedness in $H^1(\R^d)$ of $\left(\mathcal{A}u_\varepsilon\right)_\varepsilon$ ensures also the boundedness in $L^{2(p-1)}(\R^d)$, so that the previous estimate becomes
	\[
	\left|\f{\varepsilon^{d-1}}d\|\widetilde{u}_\varepsilon\|_{L^p(\G_\varepsilon^d)}^p-\|\mathcal{A}u_\varepsilon\|_{L^p(\R^d)}^p\right|\lesssim_p \varepsilon\qquad\text{as }\varepsilon\to0\,.
	\]
	Moreover, Lemma \ref{lem:uutLq}(i) and \eqref{bound} imply
	\[
	\left|\f{\varepsilon^{d-1}}d\|u_\varepsilon\|_{L^p(\G_\varepsilon^d)}^p-\f{\varepsilon^{d-1}}d\|\widetilde{u}_\varepsilon\|_{L^p(\G_\varepsilon^d)}^p\right|\lesssim_{d,p}\varepsilon
	\]
	provided $\varepsilon$ is small enough, which together with the previous estimates gives \eqref{eq:uAuLp1}.
	
	If $d\geq 3$ and $p\in\left(\f{2^*}2+1,2^*\right)$, combining \eqref{bound} with Lemma \ref{lem:uutLq}(ii) entails
	\begin{equation}
	\label{pezzo2}
	\left|\f{\varepsilon^{d-1}}d\|u_\varepsilon\|_{L^p(\G_\varepsilon^d)}^p-\f{\varepsilon^{d-1}}d\|\widetilde{u}_\varepsilon\|_{L^p(\G_\varepsilon^d)}^p\right|\lesssim_{d,p,\gamma}\varepsilon^{\f{d-2}2(2^*-p)-\gamma}
	\end{equation}
	for every $\gamma\in(0,1]$ and $\varepsilon$ sufficiently small. Furthermore, by Lemma \ref{lem:normA}, Proposition \ref{prop:GNint} (with $q=2(p-1)$ and $\alpha=\gamma$) and \eqref{bound}, we get
	\[
	\begin{split}
	\|\mathcal{A}u_\varepsilon\|_{L^{2(p-1)}(\R^d)}^{2(p-1)}&\lesssim_{d,p,\gamma} \varepsilon^{d-1}\left(\|u_\varepsilon\|_{L^{2(p-1)}(\G_\varepsilon^d)}^{2(p-1)}+\varepsilon^{p-\gamma}\|u_\varepsilon\|_{L^2(\G_\varepsilon^d)}^\gamma\|u_\varepsilon'\|_{L^2(\G_\varepsilon^d)}^{2(p-1)-\gamma}\right)\\
	&\lesssim_{d,p,\gamma}\varepsilon^{d-1}\varepsilon^{p-\gamma}\|u_\varepsilon\|_{L^2(\G_\varepsilon^d)}^\gamma\|u_\varepsilon'\|_{L^2(\G_\varepsilon^d)}^{2(p-1)-\gamma}\lesssim_{d,p,\gamma}\varepsilon^{p-(d-1)(p-2)-\gamma}\,,
	\end{split}
	\]
	and, plugging into \eqref{utAuLp},
	\[
	\left|\f{\varepsilon^{d-1}}d\|\widetilde{u}_\varepsilon\|_{L^p(\G_\varepsilon^d)}^p-\|\mathcal{A}u_\varepsilon\|_{L^p(\R^d)}^p\right|\lesssim_{d,p,\gamma} \varepsilon^{1+\f{p-(d-1)(p-2)-\gamma}2}=\varepsilon^{\f{d-2}2(2^*-p)-\f\gamma2}\,.
	\]
	Coupling with \eqref{pezzo2} implies \eqref{eq:uAuLp2} and completes the proof of the lemma.
\end{proof}

We are now in position to prove Theorems \ref{thm:action}--\ref{thm:2dsquare}.

\begin{proof}[Proof of Theorem \ref{thm:action}]
	Fix $p\in(2,2^*)$ and $\omega>0$. For every $\varepsilon>0$, let $u_\varepsilon\in\widetilde{\NN}_{\omega,\G_\varepsilon^d}$ be a positive ground state of $\widetilde{J}_{\omega,\G_\varepsilon^d}$ satisfying
	\begin{equation}
		\label{eq:cubi}
		\sup_{j\in\N}\|u_\varepsilon\|_{L^2(Q_j)}=\|u_\varepsilon\|_{L^2(Q_0)}\,,
	\end{equation}
	where $(Q_j)_{j\in\N}\subset\R^d$ is an $\varepsilon$--independent, countable family of open, unitary $d$--dimensional cubes in $\R^d$ such that $Q_i\cap Q_j=\emptyset$ for every $i\neq j$, $\overline{\bigcup_{j\in\N}Q_j}=\R^d$ and $Q_0$ is centered at the origin. It is clear that \eqref{eq:cubi} does not imply any loss of generality, since for every $u_\varepsilon$ there always exists $x_\varepsilon\in\R^d$ for which $u_\varepsilon(\cdot-x_\varepsilon)$ satisfies \eqref{eq:cubi}. Note also that, by Theorem \ref{thm:stimeJe}, Lemma \ref{lem:u->Au} applies to $u_\varepsilon$ as $\varepsilon\to0$. 
	
	Set $v_\varepsilon:=\pi_\omega(\mathcal{A}u_\varepsilon)\mathcal{A}u_\varepsilon$. It then follows that $v_\varepsilon\in\NN_{\omega,\R^d}$ and, by Lemmas \ref{lem:normA}--\ref{lem:u->Au}, if $d=2$ and $p>2$ or $d\geq3$ and $p\in\left(2,\f{2^*}2+1\right]$ it holds
	\begin{equation}
		\label{Jv1}
		J_{\omega,\R^d}(v_\varepsilon)=\kappa\pi_\omega(\mathcal{A}u_\varepsilon)^{p}\|\mathcal{A}u_\varepsilon\|_{L^p(\R^d)}^p\leq\f\kappa d\varepsilon^{d-1}\|u_\varepsilon\|_{L^p(\G_\varepsilon^d)}^p+C\varepsilon=\varepsilon^{d-1}\widetilde{\JJ}_{\G_\varepsilon^d}(\omega)+C\varepsilon\qquad\text{as }\varepsilon\to0,
	\end{equation}
	for a suitable constant $C>0$ depending only on $d$, $p$ and $\omega$, whereas if $d\geq3$ and $p\in\left(\f{2^*}2+1,2^*\right)$ we get for every $\gamma>0$
	\begin{equation}
		\label{Jv2}
		J_{\omega,\R^d}(v_\varepsilon)\leq\varepsilon^{d-1}\widetilde{\JJ}_{\G_\varepsilon^d}(\omega)+C'\varepsilon^{\f{d-2}2(2^*-p)-\gamma}\qquad\text{as }\varepsilon\to0,
	\end{equation}
	for $C'>0$ depending only on $d,p, \gamma$ and $\omega$. Since $\JJ_{\R^d}(\omega)\leq J_{\omega,\R^d}(v_\varepsilon)$ by definition, coupling \eqref{Jv1}--\eqref{Jv2} with Lemma \ref{lem:upest} proves Theorem \ref{thm:action}(i) (noting also that $\f{d-2}2(2^*-p)<1$ for every $p>\f{2^*}2+1$). Furthermore, $(v_\varepsilon)_\varepsilon\subset\NN_{\omega,\R^d}$ is a minimizing sequence for $\JJ_{\R^d}(\omega)$ and, since arguing as in the proof of Lemma \ref{lem:u->Au} it is readily seen that
	\[
	\lim_{\varepsilon\to0}\left|\|v_\varepsilon\|_{L^2(\Omega)}^2-\f{\varepsilon^{d-1}}d\|u_\varepsilon\|_{L^2(\Omega)}^2\right|=0
	\]
	for every given measurable $\Omega\subset\R^d$, by \eqref{eq:cubi} it satisfies 
	\begin{equation}
		\label{eq:vcubi}
		\lim_{\varepsilon\to0}\left|\|v_\varepsilon\|_{L^2(Q_0)}^2-\sup_{j\in\N}\|v_\varepsilon\|_{L^2(Q_j)}^2\right|=0\,.
	\end{equation}
	Since $\|v_\varepsilon-\mathcal{A}u_\varepsilon\|_{H^1(\R^d)}=o(1)$ as $\varepsilon\to0$, we are left to show that $v_\varepsilon$ converges strongly in $H^1(\R^d)$ to the ground state $\varphi_\omega$ of $J_{\omega,\R^d}$ in $\NN_{\omega,\R^d}$ attaining its $L^\infty$ norm at the origin. That such a convergence holds is a classic result, but for the sake of completeness we report here the details of the proof.
	
	As $(v_\varepsilon)_\varepsilon$ is bounded in $H^1(\R^d)$, there exists $v\in H^1(\R^d)$ such that, up to subsequences, $v_\varepsilon\rightharpoonup v$ in $H^1(\R^d)$ for $\varepsilon\to0$. Let $m:=\|v\|_{L^2(\R^d)}^2$, so that by semicontinuity $m\leq\liminf_{\varepsilon\to0}\|v_\varepsilon\|_{L^2(\R^d)}^2$. Observe that, if $m=\lim_{\varepsilon\to0}\|v_\varepsilon\|_{L^2(\R^d)}^2$, then the convergence of $v_\varepsilon$ to $v$ is strong in $L^2(\R^d)$ and thus, by Gagliardo--Nirenberg inequalities \eqref{eq:GNd}, strong in $L^p(\R^d)$. In particular, by semicontinuity again and $v_\varepsilon\in\NN_{\omega,\R^d}$,
	\[
	\pi_\omega(v)=\f{\|\nabla v\|_{L^2(\R^d)}^2+\omega\|v\|_{L^2(\R^d)}^2}{\|v\|_{L^p(\R^d)}^p}\leq \liminf_{\varepsilon\to0}	\f{\|\nabla v_\varepsilon\|_{L^2(\R^d)}^2+\omega\|v_\varepsilon\|_{L^2(\R^d)}^2}{\|v_\varepsilon\|_{L^p(\R^d)}^p}=1\,,
	\]
	so that
	\[
	\JJ_{\R^d}(\omega)\leq J_{\omega,\R^d}(\pi_\omega(v)v)\leq\kappa\pi_\omega(v)^p\|v\|_{L^p(\R^d)}^p\leq\kappa\lim_{\varepsilon\to0}\|v_\varepsilon\|_{L^p(\R^d)}^p=\lim_{\varepsilon\to0}J_{\omega,\R^d}(v_\varepsilon)=\JJ_{\R^d}(\omega)\,,
	\]
	that is $\pi_\omega(v)=1$, i.e. $v\in\NN_{\omega,\R^d}$ and it is a ground state of $J_{\omega,\R^d}$. By \eqref{eq:vcubi} and the fact that $Q_0$ is centered at the origin, $v$ is thus the unique ground state $\varphi_\omega$ attaining its $L^\infty$ norm at the origin, the convergence of $v_\varepsilon$ is not up to subsequences (by uniqueness of the limit) and it is strong in $H^1(\R^d)$.
	
	To complete the proof it is then enough to rule out the possibility that $m<\liminf_{\varepsilon\to0}\|v_\varepsilon\|_{L^2(\R^d)}^2$. To this end, let us distinguish the cases $m>0$ and $m=0$. 
	
	If $0<m<\liminf_{\varepsilon\to0}\|v_\varepsilon\|_{L^2(\R^d)}^2$, then $\liminf_{\varepsilon\to0}\|v_\varepsilon-v\|_{L^2(\R^d)}^2>0$. Let $\theta, \theta_\varepsilon\in\R$ be such that $v\in\NN_{\theta,\R^d}$, $v_\varepsilon-v\in\NN_{\theta_\varepsilon,\R^d}$ for every $\varepsilon$. By the weak convergence of $v_\varepsilon$ to $v$ in $H^1(\R^d)$, Brezis--Lieb Lemma \cite{BL} and $v_\varepsilon\in\NN_{\omega,\R^d}$, it follows
	\[
	\begin{split}
	\theta_\varepsilon=&\f{\|v_\varepsilon-v\|_{L^p(\R^d)}^p-\|\nabla v_\varepsilon-\nabla v\|_{L^2(\R^d)}^2}{\|v_\varepsilon-v\|_{L^2(\R^d)}^2}=\f{\|v_\varepsilon\|_{L^p(\R^d)}^p-\|v\|_{L^p(\R^d)}^p-\|\nabla v_\varepsilon\|_{L^2(\R^d)}^2+\|\nabla v\|_{L^2(\R^d)}^2+o(1)}{\|v_\varepsilon-v\|_{L^2(\R^d)}^2}\\
	=&\f{\omega\|v_\varepsilon\|_{L^2(\R^d)}^2-\theta\|v\|_{L^2(\R^d)}^2+o(1)}{\|v_\varepsilon-v\|_{L^2(\R^d)}^2}=\omega+\f{\|v\|_{L^2(\R^d)}^2}{\|v_\varepsilon-v\|_{L^2(\R^d)}^2}(\omega-\theta)+o(1)\qquad\text{as }\varepsilon\to0\,,
	\end{split}
	\]
	which implies that either $\theta>\omega$ or $\liminf_{\varepsilon\to0}\theta_\varepsilon>\omega$ or $\theta=\lim_{\varepsilon\to0}\theta_\varepsilon=\omega$. However, since by Brezis--Lieb Lemma \cite{BL} we also have
	\[
	\JJ_{\R^d}(\omega)=\kappa\lim_{\varepsilon\to0}\|v_\varepsilon\|_{L^p(\R^d)}^p=\kappa\lim_{\varepsilon\to0}\|v_\varepsilon-v\|_{L^p(\R^d)}^p+\kappa\|v\|_{L^p(\R^d)}^p\geq\lim_{\varepsilon\to0}\JJ_{\R^d}(\theta_\varepsilon)+\JJ_{\R^d}(\theta)\,,
	\]
	this provides a contradiction, as $\JJ_{\R^d}$ is nonnegative on $\R$ and strictly increasing on $\R^+$ by \eqref{eq:JRdw}.
	
	Assume then by contradiction that $m=0$, i.e. $v\equiv0$ on $\R^d$. Since the convergence of $v_\varepsilon$ to $v$ is locally strong in $L^2$, this implies that $\|v_\varepsilon\|_{L^2(Q_0)}\to0$ as $\varepsilon\to0$, so that by \eqref{eq:vcubi}
	\begin{equation}
		\label{eq:vto0}
	\lim_{\varepsilon\to0}\sup_{j\in\N}\|v_\varepsilon\|_{L^2(Q_j)}=0.
	\end{equation}
	Recall now that, by standard Gagliardo--Nirenberg inequalities on bounded subsets of $\R^d$, we have for every $j\in\N$
	\[
	\|v_\varepsilon\|_{L^{2+\f4d}(Q_j)}^{2+\f4d}\leq C\|v_\varepsilon\|_{L^2(Q_j)}^{\f4d}\|\nabla v_\varepsilon\|_{L^2(Q_j)}^2
	\]
	for a suitable $C>0$ independent of $j$ and $\varepsilon$. Summing over $j$ we obtain
	\[
	\|v_\varepsilon\|_{L^{2+\f4d}(\R^d)}^{2+\f4d}=\sum_{j\in\N}\|v_\varepsilon\|_{L^{2+\f4d}(Q_j)}^{2+\f4d}\leq C\sum_{j\in\N}\|v_\varepsilon\|_{L^2(Q_j)}^{\f4d}\|\nabla v_\varepsilon\|_{L^2(Q_j)}^2\leq C\sup_{j\in\N}\|v_\varepsilon\|_{L^{2+\f4d}(Q_j)}^{\f4d}\|\nabla v_\varepsilon\|_{L^2(\R^d)}^2
	\]
	and, combining with \eqref{eq:vto0},
	\begin{equation}
		\label{eq:pcritto0}
		\lim_{\varepsilon\to0}\|v_\varepsilon\|_{L^{2+\f4d}(\R^d)}=0\,.
	\end{equation}
	Since $(v_\varepsilon)_\varepsilon$ is uniformly bounded both in $L^2(\R^d)$ and in $L^{2^*}(\R^d)$, it then follows that $v_\varepsilon\to0$ strongly in $L^p(\R^d)$ when $\varepsilon\to0$. As this provides again a contradiction, given that $v_\varepsilon\in\NN_{\omega,\R^d}$ and thus $\|v_\varepsilon\|_{L^p(\R^d)}^p\gtrsim_{p}\JJ_{\R^d}(\omega)>0$ for every $\varepsilon$, we conclude.
\end{proof}

\begin{proof}[Proof of Theorem \ref{thm:2dsquare}]
	Let $p\in\left(2,2+\f4d\right)$ and $\mu>0$ be fixed. For every $\varepsilon>0$, let $u_\varepsilon\in H_{\f{d}{\varepsilon^{d-1}}\mu}^1(\G_\varepsilon^d)$ be a ground state of $\widetilde{E}_{\G_\varepsilon^d}$ with mass $\f d{\varepsilon^{d-1}}\mu$ satisfying \eqref{eq:cubi}. Set then $v_\varepsilon:=\sqrt{\f\mu{\|\mathcal{A}u_\varepsilon\|_{L^2(\R^d)}^2}}\,\mathcal{A}u_\varepsilon$, so that by definition $v_\varepsilon\in H_\mu^1(\R^d)$ for every $\varepsilon$. Since Lemma \ref{lem:u->Au} applies to $u_\varepsilon$ by Theorem \ref{thm:stimeGe}, arguing as in the proof of Theorem \ref{thm:action} it is readily seen that, if $d\in\left\{2,3,4\right\}$, or if $d\geq5$ and $p\in\left(2,\f{2^*}2+1\right]$ (which is strictly contained in $\left(2,2+\f4d\right)$), then 
	\[
	\EE_{\R^d}(\mu)\leq E_{\R_d}(v_\varepsilon)\leq \varepsilon^{d-1}\widetilde{E}_{\G_\varepsilon^d}(u_\varepsilon)+C\varepsilon=\varepsilon^{d-1}\widetilde{\EE}\left(\f d{\varepsilon^{d-1}}\mu\right)+C\varepsilon\qquad\text{as }\varepsilon\to0
	\]
	for a suitable $C>0$ depending only on $d,p$ and $\mu$, whereas if $d\geq 5$ and $p\in\left(\f{2^*}2+1,2^*\right)$, then for every $\gamma>0$ there exists $C>0$, depending only on $d, p,\gamma$ and $\mu$, such that
	\[
	\EE_{\R^d}(\mu)\leq E_{\R_d}(v_\varepsilon)\leq\varepsilon^{d-1}\widetilde{\EE}\left(\f d{\varepsilon^{d-1}}\mu\right)+C\varepsilon^{\f{d-2}2(2^*-p)-\gamma}\qquad\text{as }\varepsilon\to0\,.
	\]
	Since $\f{d-2}2(2^*-p)<1$ for every $p\in\left(\f{2^*}2+1,2^*\right)$, combining with Lemma \ref{lem:upest} proves Theorem \ref{thm:2dsquare}(i). Moreover, $(v_\varepsilon)_\varepsilon\subset H_\mu^1(\R^d)$ is a minimizing sequence for $E_{\R^d}$ which is bounded in $H^1(\R^d)$ and satisfies \eqref{eq:vcubi}. Hence, up to subsequences $v_\varepsilon\rightharpoonup v$ in $H^1(\R^d)$, for some $v\in H^1(\R^d)$. To conclude the proof of Theorem \ref{thm:2dsquare}, it is enough to prove that $v$ is the unique ground state $\phi_\mu$ of $E_{\R^d}$ at mass $\mu$ attaining its $L^\infty$ norm at the origin. To this end, it is sufficient to prove that $\|v\|_{L^2(\R^d)}^2=\mu$, because this implies that $v_\varepsilon$ tends to $v$ strongly in $L^q(\R^d)$ for every $q\in\left[2,2+\f4d\right]$, so that $v\in H_\mu^1(\R^d)$ and by semicontinuity
	\[
	\EE_{\R^d}(\mu)\leq E_{\R^d}(v)\leq\lim_{\varepsilon\to0}E_{\R^d}(v_\varepsilon)=\EE_{\R^d}(\mu)\,,
	\]
	i.e. $v$ is a ground state of $E_{\R^d}$ at mass $\mu$. That it attains its $L^\infty$ norm at the origin is then a consequence of \eqref{eq:vcubi}. Moreover, the convergence of $v_\varepsilon$ is strong in $H^1(\R^d)$ and does not depend on the subsequence (by uniqueness of the limit) and, since
	\begin{equation}
		\label{eq:LconE}
	\mathcal{L}_{\G_\varepsilon^d}(u_\varepsilon)=\left(1-\f2p\right)\f{\|u_\varepsilon\|_{L^p(\G_\varepsilon^d)}^p}{d\|u_\varepsilon\|_{L^2(\G_\varepsilon^d)}^2}-2\f{\widetilde{\EE}_{\G_\varepsilon^d}\left(\f d{\varepsilon^{d-1}}\mu\right)}{\|u_\varepsilon\|_{L^2(\G_\varepsilon^d)}^2}\,,
	\end{equation}
	\eqref{eq:convLu} follows by Theorem \ref{thm:2dsquare}(i), the strong convergence of $v_\varepsilon$ to $\phi_\mu$ in $L^p(\R^d)$, the fact that $\|v_\varepsilon-\mathcal{A}u_\varepsilon\|_{H^1(\R^d)}=o(1)$ as $\varepsilon\to0$ and Lemma \ref{lem:u->Au}.
	
	The argument showing that $m:=\|v\|_{L^2(\R^d)}^2=\mu$ is again classic. Assume first by contradiction that $m\in(0,\mu)$, so that $\liminf_{\varepsilon\to0}\|v_\varepsilon-v\|_{L^2(\R^d)}^2>0$.  Then, by $v_\varepsilon\rightharpoonup v$ in $H^1(\R^d)$, it follows
	\[
	\begin{split}
		\|v_\varepsilon-v\|_{L^2(\R^d)}^2=&\,\|v_\varepsilon\|_{L^2(\R^d)}^2-\|v\|_{L^2(\R^d)}^2+o(1)=\mu-m+o(1)\\
		\|\nabla v_\varepsilon-\nabla v\|_{L^2(\R^d)}^2=&\,\|\nabla v_\varepsilon\|_{L^2(\R^d)}^2-\|\nabla v\|_{L^2(\R^d)}^2+o(1)
	\end{split}
	\]
	and, by Brezis--Lieb Lemma \cite{BL},
	\[
	\|v_\varepsilon-v\|_{L^p(\R^d)}^p=\|v_\varepsilon\|_{L^p(\R^d)}^p-\|v\|_{L^p(\R^d)}^p+o(1)
	\]
	for every $\varepsilon$ small enough, so that
	\begin{equation}
		\label{eq:splitE}
		E_{\R^d}(v_\varepsilon)=E_{\R^d}(v_\varepsilon-v)+E_{\R^d}(v)+o(1)\qquad\text{as }\varepsilon\to0\,.
	\end{equation}
	Since $p>2$ and $m<\mu$,
	\begin{equation*}
		\EE_{\R^d}(\mu)\leq E_{\R^d}\left(\sqrt{\f\mu m}\,v\right)=\f\mu m\|\nabla v\|_{L^2(\R^d)}^2-\left(\f\mu m\right)^{\f p2}\|v\|_{L^p(\R^d)}^p< \f\mu m E_{\R^d}(v)\,,
	\end{equation*}
	in turn yielding
	\begin{equation}
		\label{eq:Ev}
		E_{\R^d}(v)>\f m\mu\EE_{\R^d}(\mu)\,.
	\end{equation}
	Moreover, since $m>0$
	\begin{equation*}
		\EE_{\R^d}(\mu)\leq E_{\R^d}\left(\sqrt{\f \mu {\|v_\varepsilon-v\|_{L^2(\R^d)}^2}}\,(v_\varepsilon-v)\right)< \f \mu {\|v_\varepsilon-v\|_{L^2(\R^d)}^2}E_{\R^d}(v_\varepsilon-v)
	\end{equation*}
	so that 
	\begin{equation}
		\label{eq:Eve-v}
		\liminf_{\varepsilon\to0}E_{\R^d}(v_\varepsilon-v)\geq\f{\mu-m}\mu\EE_{\R^d}(\mu)\,.
	\end{equation}
	Combining \eqref{eq:splitE}, \eqref{eq:Ev}, \eqref{eq:Eve-v} gives
	\[
	\EE_{\R^d}(\mu)=\lim_{\varepsilon\to0} E_{\R^d}(v_\varepsilon)\geq\liminf_{\varepsilon\to0}E_{\R^d}(v_\varepsilon-v)+E_{\R^d}(v)> \f{\mu-m}\mu\EE_{\R^d}(\mu)+\f m\mu\EE_{\R^d}(\mu)=\EE_{\R^d}(\mu)\,,
	\]
	i.e. a contradiction. Hence, either $m=0$ or $m=\mu$.
	
	Suppose then that $m=0$, i.e. $v\equiv0$ on $\R^d$.  Arguing as in the proof of Theorem \ref{thm:action}, it then follows that $(v_\varepsilon)_\varepsilon$ satisfies \eqref{eq:pcritto0}, which together with the boundedness in $L^2(\R^d)$ implies that $v_\varepsilon\to0$ strongly in $L^p(\R^d)$ as $\varepsilon\to0$. By semicontinuity, this leads to
	\[
	\EE_{\R^d}(\mu)=\lim_{\varepsilon\to0}E_{\R^d}(v_\varepsilon)\geq\liminf_{\varepsilon\to0}\f12\|\nabla v_\varepsilon\|_{L^2(\R^d)}^2\geq0\,,
	\]
	which is again a contradiction in view of \eqref{eq:ERdm}.  Therefore, $m=\mu$ and we conclude.
\end{proof}

\begin{remark}
	\label{rem:noequi}
	Theorem \ref{thm:2dsquare} can be used, inter alia, to show that the sublevel sets of the functionals $F_\varepsilon$ defined in Remark \ref{rem:gamma} are not equicoercive with respect to the strong convergence in $H^1(\R^d)$ of piecewise--affine extensions through the operator $\mathcal{A}$. To this end, consider for instance the following construction. Fix $\mu,\mu_1, \mu_2>0$ so that $\mu=\mu_1+\mu_2$ and, for every $\varepsilon>0$, let $u_{\varepsilon,1}\in H_{\f d{\varepsilon^{d-1}}\mu_1}^1(\G_\varepsilon^d)$, $u_{\varepsilon,2}\in H_{\f d{\varepsilon^{d-1}}\mu_2}^1(\G_\varepsilon^d)$ be compactly supported functions on $\G_\varepsilon^d$ satisfying $\widetilde{E}_{\G_\varepsilon^d}(u_{\varepsilon,1})\leq \widetilde{\EE}_{\G_\varepsilon^d}\left(\f d{\varepsilon^{d-1}}\mu_1\right)+\varepsilon$ and $\widetilde{E}_{\G_\varepsilon^d}(u_{\varepsilon,2})\leq \widetilde{\EE}_{\G_\varepsilon}\left(\f d{\varepsilon^{d-1}}\mu_2\right)+\varepsilon$ respectively. Exploiting the periodicity of $\G_\varepsilon^d$, we can then define the function $v_\varepsilon\in H_{\f d{\varepsilon^{d-1}}\mu}^1(\G_\varepsilon^d)$ as the disjoint union of (translated copies of) $u_{\varepsilon,1}$ and $u_{\varepsilon,2}$, in such a way that the $L^\infty$ norm of $u_{\varepsilon,1}$ is always attained inside the neighbourhood of radius $\varepsilon$ of the origin. Roughly, as $\varepsilon\to0$, $v_\varepsilon$ splits into a copy of $u_{\varepsilon,1}$ centered at the origin and a copy of $u_{\varepsilon,2}$ running away at infinity on $\G_\varepsilon^d$. Clearly, $\widetilde{E}_{\G_\varepsilon^d}(v_\varepsilon)= \widetilde{E}_{\G_\varepsilon^d}(u_{\varepsilon,1})+\widetilde{E}_{\G_\varepsilon^d}(u_{\varepsilon,2})=\widetilde{\EE}_{\G_\varepsilon^d}\left(\f d{\varepsilon^{d-1}}\mu_1\right)+\widetilde{\EE}_{\G_\varepsilon^d}\left(\f d{\varepsilon^{d-1}}\mu_2\right)+2\varepsilon$, so that by Theorem \ref{thm:2dsquare} one has $F_\varepsilon(v_\varepsilon)\leq \EE_{\R^d}(\mu_1)+\EE_{\R^d}(\mu_2)+o(1)$ as $\varepsilon$ is small enough. Nevertheless, since Theorem \ref{thm:2dsquare} implies that $\mathcal{A}u_{\varepsilon,i}\to \phi_{\mu_i}$ in $H^1(\R^d)$ for $i=1,2$, it is not possible to extract from $v_\varepsilon$ any subsequence whose corresponding extension $\mathcal{A}v_\varepsilon$ converges strongly in $H^1(\R^d)$.
\end{remark}

Ending this section, we give the proof of Proposition \ref{prop:mult}.

\begin{proof}[Proof of Proposition \ref{prop:mult}]
	
	For every $d\geq2$, $p\in\left[2+\f4d,2^*\right)$ and $\mu$ sufficiently large (depending on $d$ and $p$), existence of ground states of $\widetilde{E}_{\G_1^d}$ in $H_{\mu}^1(\G_1^d)$ can be proved adapting the argument developed when $d=2$ in the proof of  \cite[Theorem 1.2]{ADST}. Moreover,  when $\mu$ is large enough, there exists $w\in H_\mu^1(\R)$, compactly supported in $[0,1]$ and such that $\widetilde{E}_{\R}(w)\leq -\f{C_p}2\mu^{2\beta+1}$ (it is enough to consider compactly supported approximations of the ground state of $\widetilde{E}_{\R}$ at mass $\mu$). Since we can think of $w$ as a function in $H_{\mu}^1(\G_1^d)$ supported on a single edge, this implies
	\[
	\widetilde{\EE}_{\G_1^d}(\mu)\leq \widetilde{E}_{\G_1^d}(w)\leq -\f{C_p}2\mu^{2\beta+1}
	\]  
	for every $\mu$ large enough.  Recalling \eqref{eq:LconE}, it follows that, if $u\in H_\mu^1(\G_1^d)$ is a ground state of $\widetilde{E}_{\G_1^d}$, then
	\begin{equation}
		\label{eq:infty}
		\lim_{n\to+\infty}\mathcal{L}_{\G_1^d}(u)\geq\lim_{\mu\to+\infty}-\f{2\widetilde{\EE}_{\G_1^d}(\mu)}\mu=+\infty\,.
	\end{equation}
	Conversely, if $p\in\left(2+\f4d,2^*\right)$ and $v\in\widetilde{\NN}_{\omega,\G_1^d}$ is a ground state of $\widetilde{J}_{\omega,\G_1^d}$, Proposition \ref{prop:JG1} shows that $\|v\|_{L^2(\G_1^d)}^2\gtrsim_{d,p}\omega^{\f {4-d(p-2)}{2(p-2)}}\to+\infty$ as $\omega\to0^+$.  Therefore, there exist sequences $(\mu_n)_n, (\omega_n)_n\subset\R^+$ and  $(v_n)_n\subset H^1(\G_1^d)$ such that $\mu_n\to+\infty$ and $\omega_n\to0^+$ as $n\to+\infty$ and, for every $n$, $v_n\in\widetilde{\NN}_{\omega_n,\G_1^d}$ is a ground state of $\widetilde{J}_{\omega_n,\G_1^d}$. Since $v_n$ is thus a critical point of $\widetilde{E}_{\G_1^d}$ in $H_{\mu_n}^1(\G_1^d)$ with
	\[
	\mathcal{L}_{\G_1^d}(v_n)=\f{\omega_n}d\to0\qquad\text{as }n\to+\infty\,,
	\]
	comparing with \eqref{eq:infty} shows that $v_n$ is not the energy ground state of $\widetilde{E}_{\G_1^d}$ at mass $\mu_n$, thus showing the existence of two distinct critical points of the energy in $H_{\mu_n}^1(\G_1^d)$, provided $n$ is large enough.
\end{proof}

\section{Sharp constants in Gagliardo--Nirenberg inequalities: \\ proof of Theorem \ref{thm:GN} and Proposition \ref{prop:ex24}}
\label{sec:gn}

Let us focus here on the relation between $d$--dimensional Gagliardo--Nirenberg inequalities on $\R^d$ and on the grid $\G_1^d$ with edgelength 1. 
\begin{remark}
	\label{rem:gnr2}
	Exploiting the homogeneities of $Q_{q,\R^d}$, it is readily seen that $K_{q,\R^d}$ is attained for every $q\in\left(2,2^*\right)$, and standard regularity arguments show that optimizers are in $C^1(\R^d)\cap H^2(\R^d)\cap L^\infty(\R^d)$.
\end{remark}
As a preliminary step towards Theorem \ref{thm:GN}, we have the following lemma.
\begin{lemma}
	\label{lem:nonex}
	For every $q\in\left(2+\f4d,2^*\right)$, $K_{q,\G_1^d}$ is not attained.
\end{lemma}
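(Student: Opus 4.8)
The plan is to argue by contradiction: assume $K_{q,\G_1^d}$ is attained by some $u\in H^1(\G_1^d)$, which we may take nonnegative and, by the $0$--homogeneity of $Q_{q,\G_1^d}$, normalized so that $\|u\|_{L^2(\G_1^d)}=1$. First I would record the two structural consequences of optimality. Attaining the supremum means $u$ saturates the purely $d$--dimensional inequality \eqref{eq:dgnGe}; at the same time, the one--dimensional inequality \eqref{eq:gn1d}, whose gradient exponent $\frac q2-1$ is strictly smaller than the $d$--dimensional one $\left(\frac q2-1\right)d$, applies to $u$ and yields
\[
K_{q,\G_1^d}=Q_{q,\G_1^d}(u)\le C_q\left(\frac{\|u\|_{L^2(\G_1^d)}}{\|u'\|_{L^2(\G_1^d)}}\right)^{\frac{(d-1)(q-2)}2}.
\]
This bounds $\|u'\|_{L^2(\G_1^d)}$ from above, so an optimizer cannot concentrate at the one--dimensional microscale: the extremal profile is forced to live on the $d$--dimensional macroscale. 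This is precisely the mechanism by which \eqref{eq:gn1d}, a feature absent on $\Z^d$, enters the argument.

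Next I would transplant $u$ to $\R^d$ through its piecewise--affine extension $\mathcal{A}u\in H^1(\R^d)$ and compare $Q_{q,\R^d}(\mathcal{A}u)$ with $K_{q,\G_1^d}$. The two ingredients are the exact gradient identity behind \eqref{eq:gradAL2}, namely $\|\nabla\mathcal{A}u\|_{L^2(\R^d)}=\|\widetilde u'\|_{L^2(\G_1^d)}\le\|u'\|_{L^2(\G_1^d)}$ from \eqref{eq:util'}, together with the norm comparisons of Lemmas \ref{lem:restr}--\ref{lem:normA}. Running these in the vanishing--edgelength regime (rescaling $u$ onto $\G_\varepsilon^d$ via Lemma \ref{lem:GNeps} and letting $\varepsilon\to0$) should produce the relation
\[
Q_{q,\R^d}(\mathcal{A}u)=d^{-\frac{(d-2)(q-2)}4}\,K_{q,\G_1^d}\left(\frac{\|u'\|_{L^2(\G_1^d)}}{\|\widetilde u'\|_{L^2(\G_1^d)}}\right)^{\left(\frac q2-1\right)d}.
\]
Since the ratio in the last factor is at least $1$, the trivial bound $Q_{q,\R^d}(\mathcal{A}u)\le K_{q,\R^d}$ then gives $K_{q,\G_1^d}\le d^{(d-2)(q-2)/4}K_{q,\R^d}$; coupling this with the lower bound \eqref{eq:KGgeqKR2} (whose proof, restricting the $\R^d$--optimizer to $\G_\varepsilon^d$, is independent of this lemma) forces equality everywhere, hence simultaneously $\|u'\|_{L^2(\G_1^d)}=\|\widetilde u'\|_{L^2(\G_1^d)}$ and $Q_{q,\R^d}(\mathcal{A}u)=K_{q,\R^d}$.

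These two equalities are incompatible. The first, through the strict Cauchy--Schwarz/Jensen inequality underlying \eqref{eq:util'}, holds only if $u$ is affine on every edge, so that $\mathcal{A}u$ is globally piecewise--affine on the simplices \eqref{eq:simplex} and therefore has genuine gradient jumps across the interior simplex faces, i.e.\ is not $C^1$. The second says $\mathcal{A}u$ is an optimizer for $K_{q,\R^d}$, which by Remark \ref{rem:gnr2} must belong to $C^1(\R^d)$ — a contradiction, ruling out any optimizer. The hard part, and the step I expect to be the main obstacle, is establishing the displayed comparison \emph{in the correct direction}: a naive rescaling of the putative maximizer onto $\G_\varepsilon^d$ concentrates it and violates the a priori bounds required by the extension estimates, so the $\varepsilon\to0$ passage must instead be carried out along a delocalizing family, tracking the one--dimensional gradient defect $\|u'\|_{L^2(\G_1^d)}-\|\widetilde u'\|_{L^2(\G_1^d)}$ quantitatively. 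Keeping every norm under control in this limit is exactly where \eqref{eq:gn1d} and the interpolating inequalities of Proposition \ref{prop:GNint} have to be invoked.
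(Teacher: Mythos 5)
Your argument takes a genuinely different route from the paper's --- transplanting a putative optimizer to $\R^d$ and deriving a rigidity contradiction, versus the paper's purely intrinsic argument on the grid --- but it has a genuine gap at its core. The preliminary step is correct (and coincides with a computation in the paper: saturation of \eqref{eq:dgnGe} combined with the one--dimensional inequality \eqref{eq:gn1d} forces $\|u'\|_{L^2(\G_1^d)}\lesssim_{d,q}\|u\|_{L^2(\G_1^d)}$ for any would--be optimizer). The gap is the displayed comparison
\[
Q_{q,\R^d}(\mathcal{A}u)=d^{-\f{(d-2)(q-2)}4}\,K_{q,\G_1^d}\left(\f{\|u'\|_{L^2(\G_1^d)}}{\|\widetilde u'\|_{L^2(\G_1^d)}}\right)^{\left(\f q2-1\right)d}\,,
\]
an identity that is unproven and in general false. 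Of the three norms entering $Q_{q,\R^d}(\mathcal{A}u)$, only the gradient admits an exact formula, $\|\nabla\mathcal{A}u\|_{L^2(\R^d)}^2=\|\widetilde u'\|_{L^2(\G_1^d)}^2$; the $L^2$ and $L^q$ norms of $\mathcal{A}u$ are comparable to $\f1d\|u\|_{L^2(\G_1^d)}^2$ and $\f1d\|u\|_{L^q(\G_1^d)}^q$ only up to error terms (Lemmas \ref{lem:uutL2}, \ref{lem:uutLq}, \ref{lem:normA}, \ref{lem:u->Au}) which are small exclusively in the regime $\varepsilon\to0$ and under the uniform bounds \eqref{bound}; on the unit grid these errors are of order one. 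Your proposed remedy --- rescaling onto $\G_\varepsilon^d$ and letting $\varepsilon\to0$ --- collapses for exactly the reason you flag: setting $w_\varepsilon(x):=u(x/\varepsilon)$ gives $\varepsilon^{d-1}\|w_\varepsilon\|_{L^2(\G_\varepsilon^d)}^2=\varepsilon^{d}\|u\|_{L^2(\G_1^d)}^2\to0$, so \eqref{bound} fails and Lemma \ref{lem:u->Au} does not apply, and there is no ``delocalizing family'' to substitute, because a single fixed optimizer generates no limit process at all; ``tracking the gradient defect quantitatively'' is not an argument. Note moreover that the only place the paper runs this transplant machinery is Part 2 of the proof of Theorem \ref{thm:GN}, along maximizing sequences with $\|u_n\|_{L^2(\G_1^d)}=1$ and $\|u_n'\|_{L^2(\G_1^d)}\to0$, and that delocalization is established through a Brezis--Lieb dichotomy whose compact alternative is excluded precisely by invoking Lemma \ref{lem:nonex}; obtaining your comparison by that route would therefore be circular. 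As a consequence, your rigidity endgame (equality forces $u$ affine on every edge, hence $\mathcal{A}u$ piecewise affine and not $C^1$, against Remark \ref{rem:gnr2}) is sound as pure logic but rests entirely on a premise you cannot establish.

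For comparison, the paper's proof never leaves the grid: it takes $v=cu$ with $c\to0^+$, observes that $\|v\|_{L^q(\G_1^d)}^q\le\|v'\|_{L^2(\G_1^d)}^2$ once $c$ is small, and plays this against the saturation of \eqref{eq:dgnGe} and against \eqref{eq:gn1d} to extract two power laws relating $\|v\|_{L^2(\G_1^d)}$ and $\|v'\|_{L^2(\G_1^d)}$ that are claimed to be incompatible as $c\to0^+$. A word of caution if you rework that argument: since $\left(\f q2-1\right)d>2$ in the range of the lemma, solving the saturated inequality for $\|v'\|_{L^2(\G_1^d)}$ involves a negative exponent and therefore reverses the inequality sign; the direction asserted in \eqref{ass1} is the one that would follow for $q<2+\f4d$, so that step of the printed proof deserves scrutiny rather than being taken at face value.
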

\begin{proof}
	We argue by contradiction. Let $q\in\left(2+\f4d,2^*\right)$ be fixed and assume that there exists $u\in H^1(\G_1^d)$  such that $Q_{q,\G_1^d}(u)=K_{q,\G_1^d}$. By homogeneity, this yields $Q_{q,\G_1^d}(cu)=K_{q,\G_1^d}$ for every $c\in\R$. Let then $v:=cu$ and note that, by definition,
	\[
	\|v'\|_{L^2(\G_1^d)}^2\geq\|v\|_{L^q(\G_1^d)}^q
	\]
	provided $c>0$ is sufficiently small. Coupling with $Q_{q,\G_1^d}(v)=K_{q,\G_1^d}$ entails
	\[
	\|v'\|_{L^2(\G_1^d)}^2\geq K_{q,\G_1^d}\|v\|_{L^2(\G_1^d)}^{d+(2-d)\f q2}\|v'\|_{L^2(\G_1^d)}^{\left(\f q2-1\right)d},
	\]
	that is
	\begin{equation}
	\label{ass1}
	\|v'\|_{L^2(\G_1^d)}\gtrsim_{d,q}\|v\|_{L^2(\G_1^d)}^{\f{2d+(2-d)q}{4-(q-2)d}}\,.
	\end{equation}
	Viceversa, by the one--dimensional Gagliardo--Nirenberg inequality \eqref{eq:gn1d} and $Q_{q,\G_1^d}(v)=K_{q,\G_1^d}$  we obtain
	\[
	K_{q,\G_1^d}\|v\|_{L^2(\G_1^d)}^{d+(2-d)\f q2}\|v'\|_{L^2(\G_1^d)}^{\left(\f q2-1\right)d}\leq C_q\|v\|_{L^2(\G_1^d)}^{\f q2+1}\|v'\|_{L^2(\G_1^d)}^{\f q2-1}\,,
	\]
	so that 
	\[
	\|v'\|_{L^2(\G_1^d)}\lesssim_{d,q}\|v\|_{L^2(\G_1^d)}\,.
	\]
	Combining with \eqref{ass1} thus yields
	\[
	\|v\|_{L^2(\G_1^d)}^\f{2(2-q)}{4-(q-2)d}\gtrsim_{d,q}1\,,
	\]
	providing the contradiction we seek as soon as $c\to0^+$, since $q>2+\f4d$ and, by construction, $\|v\|_{L^2(\G_1^d)}=c\|u\|_{L^2(\G_1^d)}$.
\end{proof}
\begin{proof}[Proof of Theorem \ref{thm:GN}] 
		That $K_{q,\G_1^d}$ is not attained whenever $q\in\left(2+\f4d,2^*\right)$ is the content of Lemma \ref{lem:nonex}. We are thus left to show \eqref{eq:KGgeqKR2} and \eqref{eq:KG=KR2}. We split the proof in two parts.
		
		\smallskip
		{\em Part 1: proof of \eqref{eq:KGgeqKR2}}. Fix $q\in(2,2^*)$ and, according to Remark \ref{rem:gnr2}, let $u\in C^1(\R^d)\cap H^2(\R^d)\cap L^\infty(\R^d) $ be such that $Q_{q,\R^d}(u)=K_{q,\R^d}$. Denoting by $u_\varepsilon$ the restriction of $u$ to $\G_\varepsilon^d$ for every $\varepsilon>0$, Lemmas \ref{lem:GNeps}--\ref{lem:restr} entail
		\[
		\begin{split}
		K_{q,\G_1^d}=&\,\varepsilon^{-\left(\f q2-1\right)(d-1)}\sup_{v\in H^1(\G_\varepsilon^d)}\f{\|v\|_{L^q(\G_\varepsilon^d)}^q}{\|v\|_{L^2(\G_\varepsilon^d)}^{d+(2-d)\f q2}\|v'\|_{L^2(\G_\varepsilon^d)}^{\left(\f q2-1\right)d}}\geq \varepsilon^{-\left(\f q2-1\right)(d-1)}\f{\|u_\varepsilon\|_{L^q(\G_\varepsilon^d)}^q}{\|u_\varepsilon\|_{L^2(\G_\varepsilon^d)}^{d+(2-d)\f q2}\|u_\varepsilon'\|_{L^2(\G_\varepsilon^d)}^{\left(\f q2-1\right)d}}\\
		=&\,\varepsilon^{-\left(\f q2-1\right)(d-1)}\f{d\varepsilon^{1-d}\|u\|_{L^q(\R^d)}^q+o(1)}{\left(d\varepsilon^{1-d}\|u\|_{L^2(\R^d)}^2+o(1)\right)^{\f d2+(2-d)\f q4}\left(\varepsilon^{1-d}\|\nabla u\|_{L^2(\R^d)}^2+o(1)\right)^{\left(\f q2-1\right)\f d2}}\\
		=&\, d^{\f{(d-2)(q-2)}4}Q_{q,\R^d}(u)+o(1)=d^{\f{(d-2)(q-2)}4}K_{\R^d,q}+o(1)\,,
		\end{split}
		\]
		provided $\varepsilon$ is small enough, and passing to the limit for $\varepsilon\to0$ gives \eqref{eq:KGgeqKR2}.
		
		\smallskip
		{\em Part 2: proof of \eqref{eq:KG=KR2}}.  Note first that it is enough to prove \eqref{eq:KG=KR2} for every $q\in\left(2+\f4d,2^*\right)$, as the case $q=2+\f4d$ will then follow by continuity of the map $q\mapsto Q_{q,\G_1^d}(u)$, for every given $u\in H^1(\G_1^d)$. 
		
		Let now $q\in\left(2+\f4d,2^*\right)$ be fixed and $(u_n)_n\subset H_1^1(\G_1^d)$ be such that $Q_{q,\G_1^d}(u_n)=K_{q,\G_1^d}-\f1n$ for every $n$. Moreover, without loss of generality let $u_n$ attain its $L^\infty$ norm on the neighbourhood of radius 1 of the origin in $\G_1^d$, for every $n$. We first show that 
		\begin{equation}
			\label{eq:u'to0}
			\|u_n'\|_{L^2(\G_1^d)}\to0\qquad\text{as }n\to+\infty\,.
		\end{equation}
		Indeed, $Q_{q,\G_1^d}(u_n)=K_{q,\G_1^d}-\f1n$ and the one--dimensional Gagliardo--Nirenberg inequality \eqref{eq:gn1d} give, as in the proof of Lemma \ref{lem:nonex},
		\[
		\|u_n'\|_{L^2(\G_1^d)}\lesssim_{d,q} \|u_n\|_{L^2(\G_1^d)}=1\qquad\forall n\,,
		\]
		so that $(u_n)_n$ is bounded in $H^1(\G_1^d)$ and therefore, up to subsequences, $u_n\rightharpoonup u$ in $H^1(\G_1^d)$ as $n\to+\infty$, for some $u\in H^1(\G_1^d)$. Assume then by contradiction that $\liminf_{n\to+\infty}\|u_n'\|_{L^2(\G_1^d)}>0$.  This implies that $u\not\equiv 0$ on $H^1(\G_1^d)$. Indeed, if it were $u\equiv0$ on $\G_1^d$, then since $u_n$ always attains its $L^\infty$ on the same compact subset of $\G_1^d$ by assumption and $u_n\to u$ in $L_{\text{loc}}^\infty(\G_1^d)$, the boundedness in $L^2(\G_1^d)$ would yield
		\[
		\|u_n\|_{L^q(\G_1^d)}^q\leq\|u_n\|_{L^\infty(\G_1^d)}^{q-2}\|u_n\|_{L^2(\G_1^d)}^2\to0\qquad\text{as }n\to+\infty\,,
		\]
		so that
		\[
		Q_{q,\G_1^d}(u_n)=\f{\|u_n\|_{L^q(\G_1^d)}^q}{\|u_n'\|_{L^2(\G_1^d)}^{\left(\f q2-1\right)d}}\to0 \qquad \text{as }n\to+\infty\,,
		\]
		which is impossible since $Q_{q,\G_1^d}(u_n)=K_{q,\G_1^d}-\f1n$. 
		
		Let then $m:=\|u\|_{L^2(\G_1^d)}^2$, so that $m\in(0,1]$. Note that, if $m=1$, then the convergence of $u_n$ to $u$ is strong in $L^2(\G_1^d)$ and, by Gagliardo--Nirenberg inequalities \eqref{eq:GNd}, strong in $L^q(\G_1^d)$, so that the lower semicontinuity yields
		\[
		K_{q,\G_1^d}=\lim_{n\to+\infty}Q_{q,\G_1^d}(u_n)\leq Q_{q,\G_1^d}(u)\leq K_{q,\G_1^d}\,.
		\]
		Since this is impossible by Lemma \ref{lem:nonex}, it must be $m\in(0,1)$.  Possibly passing to a further subsequence (not relabeled), set now $\lambda:=\lim_{n\to+\infty}\f{\|u'\|_{L^2(\G_1^d)}^2}{\|u_n'\|_{L^2(\G_1^d)}^2}$.  Then, by weak convergence of $u_n$ to $u$,
		\[
		\begin{split}
			\lim_{n\to+\infty}\|u_n-u\|_{L^2(\G_1^d)}^2=\lim_{n\to+\infty}\|u_n\|_{L^2(\G_1^d)}^2-\|u\|_{L^2(\G_1^d)}^2=1-m\\
			\lim_{n\to+\infty}\f{\|u_n'-u\|_{L^2(\G_1^d)}^2}{\|u_n'\|_{L^2(\G_1^d)}^2}=\lim_{n\to+\infty}\f{\|u_n'\|_{L^2(\G_1^d)}^2-\|u\|_{L^2(\G_1^d)}^2}{\|u_n'\|_{L^2(\G_1^d)}^2}=1-\lambda\,,
		\end{split}
		\]
		so that, by Brezis--Lieb Lemma \cite{BL} and $u_n-u,u\in H^1(\G_1^d)$,
		\[
		\begin{split}
			K_{q,\G_1^d}=&\,\lim_{n\to+\infty}Q_{q,\G_1^d}(u_n)=\lim_{n\to+\infty}\left(\f{\|u_n-u\|_{L^q(\G_1^d)}^q}{\|u_n'\|_{L^2(\G_1^d)}^{\left(\f q2-1\right)d}}+\f{\|u\|_{L^q(\G_1^d)}^q}{\|u_n'\|_{L^2(\G_1^d)}^{\left(\f q2-1\right)d}}\right)\\
			=&\,\lim_{n\to+\infty}\|u_n-u\|_{L^2(\G_1^d)}^{d+\f{2-d}2 q}\left(\f{\|u_n'-u'\|_{L^2(\G_1^d)}}{\|u_n'\|_{L^2(\G_1^d)}}\right)^{\left(\f q2-1\right)d}Q_{q,\G_1^d}(u_n-u)\\
			&\qquad\qquad\qquad\qquad\qquad\qquad\qquad+\|u\|_{L^2(\G_1^d)}^{d+\f{2-d}2 q}Q_{q,\G_1^d}(u)\lim_{n\to+\infty}\left(\f{\|u'\|_{L^2(\G_1^d)}}{\|u_n'\|_{L^2(\G_1^d)}}\right)^{\left(\f q2-1\right)d}\\
			\leq&\, K_{q,\G_1^d}\left((1-m)^{\f d2+\f{2-d}4q}(1-\lambda)^{\left(\f q2-1\right)\f d2}+m^{\f d2+\f{2-d}4q}\lambda^{\left(\f q2-1\right)\f d2}\right)<K_{q,\G_1^d}\,,
		\end{split}
		\]
		since $(m,\lambda)\in(0,1)^2$ and the function $f(x,y):=x^{\f d2+\f{2-d}4q}y^{\left(\f q2-1\right)\f d2}+(1-x)^{\f d2+\f{2-d}4q}(1-y)^{\left(\f q2-1\right)\f d2}$ is strictly smaller than $1$ on $(0,1)^2$ for every $q>2$. This gives again a contradiction and proves \eqref{eq:u'to0}.
		
%		Take then $(c_n)_n\subset\R$ satisfying
%		\[
%		\lim_{n\to+\infty}c_n=+\infty\qquad\text{and}\qquad c_n\geq \left(\f{\|u_n'\|_{L^2(\G_1^d)}^2}{\|u_n\|_{L^q(\G_1^d)}^q}\right)^{\f1{q-2}}\quad\forall n\,,
%		\]		
%	and set $v_n:=c_nu_n$. By construction, $(v_n)_n\in H^1(\G_1^d)$ is such that 
%	\begin{equation}
%	\label{eq:vn}
%	\begin{split}
%	&\|v_n\|_{L^2(\G_1^d)}=c_n\to+\infty\qquad\text{as }n\to+\infty\\
%	&\|v_n'\|_{L^2(\G_1^d)}^2=c_n^2\|u_n'\|_{L^2(\G_1^d)}^2\leq c_n^q\|u_n\|_{L^q(\G_1^d)}^q=\|v_n\|_{L^q(\G_1^d)}^q\qquad\forall n\\
%	\end{split}
%	\end{equation}
%	and, by the homogeneity of $Q_{q,\G_1^d}$,
%	\begin{equation}
%	\label{eq:Qvn}
%	Q_{q,\G_1^d}(v_n)=Q_{q,\G_1^d}(u_n)=K_{q,\G_1^d}-\f1n\qquad\forall n\,.
%	\end{equation}
%	Combining \eqref{eq:Qvn} with the second line of \eqref{eq:vn} leads to
%	\[
%	\|v_n'\|_{L^2(\G_1^d)}^2\leq \left(K_{q,\G_1^d}-\f1n\right)\|v_n\|_{L^2(\G_1^d)}^{d+(2-d)\f q2}\|v_n'\|_{L^2(\G_1^d)}^{\left(\f q2-1\right)d}\,,
%	\]
%	in turn implying, by the first line of \eqref{eq:vn} and $q>2+\f4d$,
%	\begin{equation}
%		\label{eq:ratio0}
%		\f{\|v_n'\|_{L^2(\G_1^d)}}{\|v_n\|_{L^2(\G_1^d)}}\leq\left(K_{q,\G_1^d}-\f1n\right)^{\f2{4-(q-2)d}} \|v_n\|_{L^2(\G_1^d)}^{\f{2(q-2)}{4-(q-2)d}}\to0\qquad\text{as }n\to+\infty\,.
%	\end{equation}	
	Now, for every $n$, set $\varepsilon_n:=\|u_n'\|_{L^2(\G_1^d)}$ and $w_n(x):=u_n(x/\varepsilon_n)$ for every $x\in\G_{\varepsilon_n}^d$. Hence, $w_n\in H^1(\G_{\varepsilon_n}^d)$ and
	\begin{equation}
	\label{eq:normewn}
	\|w_n'\|_{L^2(\G_{\varepsilon_n}^d)}^2=\f{\|u_n'\|_{L^2(\G_1^d)}^2}{\varepsilon_n},\qquad\|w_n\|_{L^r(\G_{\varepsilon_n}^d)}^r=\varepsilon_n\|u_n\|_{L^r(\G_1^d)}^r\quad\forall r\geq1\,,
	\end{equation}
	so that
	\begin{equation}
		\label{eq:Qwn}
		\f{\|w_n\|_{L^q(\G_{\varepsilon_n}^d)}^q}{\|w_n\|_{L^2(\G_{\varepsilon_n}^d)}^{d+(2-d)\f q2}\|w_n'\|_{L^2(\G_{\varepsilon_n}^d)}^{\left(\f q2-1\right)d}}=\varepsilon_n^{\left(\f q2-1\right)(d-1)}Q_{q,\G_1^d}(u_n)=\varepsilon_n^{\left(\f q2-1\right)(d-1)}\left(K_{q,\G_1^d}-\f1n\right)\,.
	\end{equation}
	Let $\mathcal{A}w_n$ be the piecewise--affine extension of $w_n$ to $\R^d$ as in \eqref{eq:Ad} and $\widetilde{w}_n$ be the restriction of $\mathcal{A}w_n$ to $\G_{\varepsilon_n}^d$ as in Section \ref{sec:prel}. By Lemma \ref{lem:uutL2}, \eqref{eq:normewn}, the definition of $\varepsilon_n$ and \eqref{eq:u'to0}, we have
	\begin{equation}
	\label{QestL2}
	\begin{split}
		\|\widetilde{w}_n\|_{L^2(\G_{\varepsilon_n}^d)}^2\leq&\, \|w_n\|_{L^2(\G_{\varepsilon_n}^d)}^2+3\varepsilon_n\|w_n\|_{H^1(\G_{\varepsilon_n}^d)}^2=\left(1+3\varepsilon_n+3\varepsilon_n\f{\|w_n'\|_{L^2(\G_{\varepsilon_n}^d)}^2}{\|w_n\|_{L^2(\G_{\varepsilon_n}^d)}^2}\right)\,\|w_n\|_{L^2(\G_{\varepsilon_n}^d)}^2\\
		=&\,\left(1+3\varepsilon_n+\f3{\varepsilon_n}\f{\|u_n'\|_{L^2(\G_1^d)}^2}{\|u_n\|_{L^2(\G_1^d)}^2}\right)\,\|w_n\|_{L^2(\G_{\varepsilon_n}^d)}^2=(1+6\varepsilon_n)\|w_n\|_{L^2(\G_{\varepsilon_n}^d)}^2\qquad\text{as }n\to+\infty\,.
	\end{split}
	\end{equation}
	Furthermore, since $\widetilde{w}_n$ is the restriction of $\mathcal{A}w_n$ to $\G_{\varepsilon_n}^d$, arguing as in the proof of Lemma \ref{lem:restr} and recalling that, by construction, $\|\nabla \mathcal{A}w_n\|_{L^2(\R^d)}^2=\varepsilon_n^{d-1}\|\widetilde{w}_n'\|_{L^2(\G_{\varepsilon_n}^d)}^2$ gives
	\[
	\begin{split}
	\|\mathcal{A}w_n\|_{L^2(\R^d)}^2\leq&\f{\varepsilon_n^{d-1}}d\|\widetilde{w}_n\|_{L^2(\G_{\varepsilon_n}^d)}^2+\varepsilon_n\|\mathcal{A}w_n\|_{L^2(\R^d)}^2+(\varepsilon_n+\varepsilon_n^2)\|\nabla\mathcal{A}w_n\|_{L^2(\R^d)}^2\\
	\leq&\f{\varepsilon_n^{d-1}}d\|\widetilde{w}_n\|_{L^2(\G_{\varepsilon_n}^d)}^2+\varepsilon_n\|\mathcal{A}w_n\|_{L^2(\R^d)}^2+(\varepsilon_n+\varepsilon_n^2)\varepsilon_n^{d-1}\|\widetilde{w}_n'\|_{L^2(\G_{\varepsilon_n}^d)}^2\,,
	\end{split}
	\]
	that is 
	\begin{equation*}
	(1-\varepsilon_n)\|\mathcal{A}w_n\|_{L^2(\R^d)}^2\leq \f{\varepsilon_n^{d-1}}d\|\widetilde{w}_n\|_{L^2(\G_{\varepsilon_n}^d)}^2+(\varepsilon_n+\varepsilon_n^2)\varepsilon_n^{d-1}\|\widetilde{w}_n'\|_{L^2(\G_{\varepsilon_n}^d)}^2\,.
	\end{equation*}
	Coupling with \eqref{QestL2} and using Jensen inequality, \eqref{eq:normewn} and \eqref{eq:u'to0}, as $n\to+\infty$ we obtain
	\begin{equation}
	\label{AwL2}
	\begin{split}
	\|\mathcal{A}w_n\|_{L^2(\R^d)}^2\leq&\,\f{\varepsilon_n^{d-1}}d(1+o(1))\|w_n\|_{L^2(\G_{\varepsilon_n}^d)}^2+\varepsilon_n^d(1+o(1))\|w_n'\|_{L^2(\G_{\varepsilon_n}^d)}^2\\
	=&\,\f{\varepsilon_n^{d-1}}d\|w_n\|_{L^2(\G_{\varepsilon_n}^d)}^2\left(1+\f{d\|u_n'\|_{L^2(\G_1^d)}^2}{\varepsilon_n\|u_n\|_{L^2(\G_1^d)}^2}+o(1)\right)=\f{\varepsilon_n^{d-1}}d(1+o(1))\|w_n\|_{L^2(\G_{\varepsilon_n}^d)}^2\,.
	\end{split}
	\end{equation}
	Let now $d=2$ and $q>2$ or $d\geq3$ and $q\in\left(2,\f{2^*}2+1\right]$. By Lemma \ref{lem:uutLq} and $Q_{q,\G_1^d}(u_n)=K_{q,\G_1^d}-\f1n$ we have
	\begin{equation}
	\label{QestLq}
	\begin{split}
		\|\widetilde{w}_n\|_{L^q(\G_{\varepsilon_n}^d)}^q\geq&\,\|w_n\|_{L^q(\G_{\varepsilon_n}^d)}^q\left(1-C\varepsilon_n^{\f{q-2}2(d-1)+1}\f{\|w_n\|_{L^2(\G_{\varepsilon_n}^d)}^{\f {d+(2-d)(q-1)}2}\|w_n'\|_{L^2(\G_{\varepsilon_n}^d)}^{\f{q-2}2 d+1}}{\|w_n\|_{L^q(\G_{\varepsilon_n}^d)}^q}\right.\\
		&\qquad\qquad\qquad\qquad\qquad\qquad\qquad\qquad\qquad\qquad\left.-C\varepsilon_n^{\f12\left(\f{q-2}2 d+3\right)}\f{\|w_n'\|_{L^2(\G_{\varepsilon_n}^d)}^q}{\|w_n\|_{L^q(\G_{\varepsilon_n}^d)}^q}\right)\\
		=&\,\|w_n\|_{L^q(\G_{\varepsilon_n}^d)}^q\left(1-C\f{\|u_n\|_{L^2(\G_1^d)}^{\f {d+(2-d)(q-1)}2}\|u_n'\|_{L^2(\G_1^d)}^{\f{q-2}2 d+1}}{\|u_n\|_{L^q(\G_1^d)}^q}-C\varepsilon_n^{\f{q(d-2)}4-\f{d-1}2}\f{\|u_n'\|_{L^2(\G_1^d)}^q}{\|u_n\|_{L^q(\G_1^d)}^q}\right)\\
		=&\,\|w_n\|_{L^q(\G_{\varepsilon_n}^d)}^q\left(1-C\left(K_{q,\G_1}-\f1n\right)\f{\|u_n'\|_{L^2(\G_1^d)}}{\|u_n\|_{L^2(\G_1^d)}}\right.\\
		&\qquad\qquad\qquad\qquad\qquad\qquad\qquad\left.-C\left(K_{q,\G_1}-\f1n\right)\varepsilon_n^{\f{q(d-2)}4-\f{d-1}2}\f{\|u_n'\|_{L^2(\G_1^d)}^{d-\f{d-2}2 q}}{\|u_n\|_{L^2(\G_1^d)}^{d-\f{d-2}2 q}}\right)\\
		=&\,(1-o(1))\|w_n\|_{L^q(\G_{\varepsilon_n}^d)}^q\qquad\text{as }n\to+\infty\,,
	\end{split}
	\end{equation}
	since $\f{q(d-2)}4-\f{d-1}2+d-\f{d-2}2 q>0$. Moreover, applying \eqref{stimachiave} to $\mathcal{A}w_n$ and $\widetilde{w}_n$, making use of the $d$--dimensional Gagliardo--Nirenberg inequality \eqref{eq:GNd} in $L^{2(q-1)}(\R^d)$ (which is possible because $2(q-1)\leq2^*$ whenever $d=2$ or $d\geq3$ and $q\in\left(2,\f{2^*}2+1\right]$) and exploiting \eqref{AwL2}, \eqref{QestLq} and \eqref{eq:gradAL2},
	\begin{equation}
	\label{AwLq1}
	\begin{split}
	\|\mathcal{A}w_n\|_{L^q(\R^d)}^q\geq&\,\f{\varepsilon_n^{d-1}}d\|\widetilde{w}_n\|_{L^q(\G_{\varepsilon_n}^d)}^q\left(1-\varepsilon_n^{2-d}\f{d\|\mathcal{A}w_n\|_{L^{2(q-1)}(\R^d)}^{q-1}\|\nabla\mathcal{A}w_n\|_{L^2(\R^d)}}{\|\widetilde{w}_n\|_{L^q(\G_{\varepsilon_n}^d)}^q}\right)\\
	\geq&\,\f{\varepsilon_n^{d-1}}d\|\widetilde{w}_n\|_{L^q(\G_{\varepsilon_n}^d)}^q\left(1-C\varepsilon_n^{2-d}\f{\|\mathcal{A}w_n\|_{L^2(\R^d)}^{\f{d+(2-d)(q-1)}2}\|\nabla\mathcal{A}w_n\|_{L^2(\R^d)}^{\f{q-2}2 d +1}}{\|\widetilde{w}_n\|_{L^q(\G_{\varepsilon_n}^d)}^q}\right)\\
	\geq&\,\f{\varepsilon_n^{d-1}}d\|w_n\|_{L^q(\G_{\varepsilon_n}^d)}^q\left(1-C\varepsilon_n^{\f{q-2}2(d-1)+1}\f{\|w_n\|_{L^2(\G_{\varepsilon_n}^d)}^{\f{d+(2-d)(q-1)}2}\|w_n'\|_{L^2(\G_{\varepsilon_n}^d)}^{\f{q-2}2 d+1}}{\|w_n\|_{L^q(\G_{\varepsilon_n}^d)}^q}\right)\\
	=&\,\f{\varepsilon_n^{d-1}}d\|w_n\|_{L^q(\G_{\varepsilon_n}^d)}^q(1-o(1))\qquad\text{as }n\to+\infty\,,
	\end{split}
	\end{equation}
	where the constant $C>0$ is not relabeled line by line and it already takes into account the inessential $o(1)$.
	
	Let then $d\geq3$ and $q\in\left(\f{2^*}2+1,2^*\right)$ and take $\gamma\in(0,1)$ such that $d+\f{2-d}2 q-\f\gamma2>0$. Then, again by Lemma \ref{lem:uutLq} and $Q_{q,\G_1^d}(u_n)=K_{q,\G_1^d}-\f1n$,
	\begin{equation*}
		\begin{split}
		\|\widetilde{w}_n\|_{L^q(\G_{\varepsilon_n}^d)}^q\geq&\, \|w_n\|_{L^q(\G_{\varepsilon_n}^d)}^q\left(1-C\varepsilon_n^{\f q2+1-\gamma}\f{\|w_n\|_{L^2(\G_{\varepsilon_n}^d)}^\gamma \|w_n'\|_{L^2(\G_{\varepsilon_n}^d)}^{q-\gamma}}{\|w_n\|_{L^q(\G_{\varepsilon_n}^d)}^q}-C\varepsilon_n^{\f q2+1-\f\gamma2}\f{\|w_n'\|_{L^2(\G_{\varepsilon_n}^d)}^q}{\|w_n\|_{L^q(\G_{\varepsilon_n}^d)}^q}\right)\\
		=&\,\|w_n\|_{L^q(\G_{\varepsilon_n}^d)}^q\left(1-C\f{\|u_n\|_{L^2(\G_1^d)}^\gamma\|u_n'\|_{L^2(\G_1^d)}^{q-\gamma}}{\|u_n\|_{L^q(\G_1^d)}^q}+\varepsilon_n^{-\f\gamma 2}\f{\|u_n'\|_{L^2(\G_1^d)}^q}{\|u_n\|_{L^q(\G_1^d)}^q}\right)\\
		=&\,\|w_n\|_{L^q(\G_{\varepsilon_n}^d)}^q\left(1-C\f{\|u_n'\|_{L^2(\G_1^d)}^{d+\f{2-d}2 q-\gamma}}{\|u_n\|_{L^2(\G_1^d)}^{d+\f{2-d}2 q-\gamma}}-C\varepsilon_n^{-\f\gamma 2}\f{\|u_n'\|_{L^2(\G_1^d)}^{d+\f{2-d}2 q}}{\|u_n\|_{L^2(\G_1^d)}^{d+\f{2-d}2 q}}\right)\\
		=&\,\|w_n\|_{L^q(\G_{\varepsilon_n}^d)}^q\left(1-2C\varepsilon_n^{d-\f{d-2}2 q-\f\gamma 2}\right)=(1-o(1))\|w_n\|_{L^q(\G_{\varepsilon_n}^d)}^q\qquad\text{as }n\to+\infty\,.
		\end{split}
	\end{equation*}
	Combining with \eqref{stimachiave} on $\mathcal{A}w_n$, $2(q-1)>2^*$, Lemma \ref{lem:normA}, Proposition \ref{prop:GNint} and \eqref{AwL2} yields
	\begin{equation}
		\label{AwLq2}
		\begin{split}
		\|\mathcal{A}w_n\|_{L^q(\R^d)}^q\geq&\,\f{\varepsilon_n^{d-1}}d\|\widetilde{w}_n\|_{L^q(\G_{\varepsilon_n}^d)}^q\left(1-\varepsilon_n^{2-d}\f{d\|\mathcal{A}w_n\|_{L^{2(q-1)}(\R^d)}^{q-1}\|\nabla\mathcal{A}w_n\|_{L^2(\R^d)}}{\|\widetilde{w}_n\|_{L^q(\G_{\varepsilon_n}^d)}^q}\right)\\
		\geq&\,\f{\varepsilon_n^{d-1}}d\|w_n\|_{L^q(\G_{\varepsilon_n}^d)}^q\left(1-C\varepsilon_n^{\f q2+1-\f\gamma 2}\f{\|w_n\|_{L^2(\G_{\varepsilon_n}^d)}^{\f\gamma 2}\|w_n'\|_{L^2(\G_{\varepsilon_n}^d)}^{q-\f\gamma 2}}{\|w_n\|_{L^q(\G_{\varepsilon_n}^d)}^q}\right)\\
		=&\,\f{\varepsilon_n^{d-1}}d\|w_n\|_{L^q(\G_{\varepsilon_n}^d)}^q(1-o(1))\qquad\text{as }n\to+\infty\,.
		\end{split}
	\end{equation}
	Since $(\mathcal{A}w_n)_n\subset H^1(\R^d)$, by \eqref{AwL2}, \eqref{AwLq1}, \eqref{AwLq2}, Lemma \ref{lem:normA}, and recalling \eqref{eq:optGn} and \eqref{eq:Qwn}, it follows then
	\[
	\begin{split}
	K_{q,\R^d}\geq\limsup_{n\to+\infty}Q_{q,\R^d}(\mathcal{A}w_n)\geq&\,\limsup_{n\to+\infty}\f{\f{\varepsilon_n^{d-1}}d\|w_n\|_{L^q(\G_{\varepsilon_n}^d)}^q}{\left(\f{\varepsilon_n^{d-1}}d\|w_n\|_{L^2(\G_{\varepsilon_n}^d)}^2\right)^{\f d2+\f{2-d}4 q}\left(\varepsilon_n^{d-1}\|w_n'\|_{L^2(\G_{\varepsilon_n}^d)}^2\right)^{\left(\f q2-1\right)\f d2}}\\
	=&\,d^{-\f{(d-2)(q-2)}4}\limsup_{n\to+\infty}\varepsilon_n^{-\left(\f q2-1\right)(d-1)}\f{\|w_n\|_{L^q(\G_{\varepsilon_n}^d)}^q}{\|w_n\|_{L^2(\G_{\varepsilon_n}^d)}^{d+\f{2-d}2 q}\|w_n'\|_{L^2(\G_{\varepsilon_n}^d)}^{\left(\f q2-1\right)d}}\\
	=&\,d^{-\f{(d-2)(q-2)}4}K_{q,\G_1^d}\,,
	\end{split}
	\]
	which coupled with \eqref{eq:KGgeqKR2} concludes the proof of \eqref{eq:KG=KR2}.
\end{proof}
\begin{proof}[Proof of Proposition \ref{prop:ex24}]
	If $K_{q,\G_1^d}=K_{q,\R^d}$, the result is trivially true. Assume then $q\in\left(2,2+\f4d\right)$, $K_{q,\G_1^d}>K_{q,\R^d}$ and let $(u_n)_n\subset H_1^1(\G_1^d)$ be such that $Q_{q,\G_1^d}(u_n)=K_{q,\G_1^d}-\f1n$ for every $n$. Moreover, with no loss of generality we can take $u_n$ to attain its $L^\infty$ norm inside the neighbourhood of radius 1 of the origin, for every $n$. Arguing as in the proofs of Lemma \ref{lem:nonex} and Theorem \ref{thm:GN}, the one--dimensional Gagliardo--Nirenberg inequality \eqref{eq:gn1d} ensures that $(u_n)_n$ is bounded in $H^1(\G_1^d)$. Hence, up to subsequences, $u_n\rightharpoonup u$ in $H^1(\G_1^d)$ as $n\to+\infty$, for some $u\in H^1(\G_1^d)$ that, by semicontinuity, satisfies $m:=\|u\|_{L^2(\G_1^d)}^2\in [0,1]$. On the one hand, if we assume $m\in(0,1)$, setting (possibly passing to a further subsequence) $\lambda:=\lim_{n\to+\infty}\f{\|u'\|_{L^2(\G_1^d)}^2}{\|u_n'\|_{L^2(\G_1^d)}^2}$ and arguing as in the proof of Theorem \ref{thm:GN} leads to a contradiction. On the other hand, if $m=0$, then up to subsequences $u_n\rightharpoonup 0$ in $H^1(\G_1^d)$ and $u_n\to 0$ in $L_{\text{\normalfont loc}}^\infty(\G_1^d)$ as $n\to+\infty$. Since, by assumption, $u_n$ attains its $L^\infty$ norm inside a fixed compact subset of $\G_1^d$ for every $n$, this entails that $u_n\to0$ in $L^\infty(\G_1^d)$, which itself implies $u_n\to 0$ in $L^q(\G_1^d)$, as $\|u_n\|_{L^2(\G_1^d)}=1$ for every $n$. Recalling that $Q_{q,\G_1^d}(u_n)=K_{q,\G_1^d}-\f1n>0$, this gives 
	\[
		\|u_n'\|_{L^2(\G_1^d)}\to0\qquad\text{as }n\to+\infty\,.
	\]
	Hence, setting $\varepsilon_n:=\|u_n'\|_{L^2(\G_1^d)}$, $w_n(x):=u_n(x/\varepsilon_n)$ for every $x\in \G_{\varepsilon_n}^d$,
	and arguing exactly as in the last part of the proof of Theorem \ref{thm:GN}, we obtain
	\[
	K_{q,\R^d}\geq\limsup_{n\to+\infty}\f{\|\mathcal{A}w_n\|_{L^q(\R^d)}^q}{\|\mathcal{A}w_n\|_{L^2(\R^d)}^{d+\f{2-d}2 q}\|\nabla\mathcal{A}w_n\|_{L^2(\R^d)}^{\left(\f q2-1\right)d}}\geq d^{-\f{(d-2)(q-2)}4}\lim_{n\to+\infty}Q_{q,\G_1^d}(u_n)=d^{-\f{(d-2)(q-2)}4}K_{q,\G_1^d}\,,
	\]
	which is again a contradiction. Therefore, it must be $m=1$, in turn implying that the convergence of $u_n$ to $u$ is strong in $L^2(\G_1^d)$ and, by \eqref{eq:gn1d} again, strong in $L^q(\G_1^d)$. By lower semicontinuity, this is enough to see that $Q_{q,\G_1^d}(u)=K_{q,\G-1^d}$, concluding the proof.
\end{proof} 

\section{Generalizations}
\label{sec:gen}
This section discusses the generalization of the method developed so far to non--cubic periodic grids. In general, 
to recover the results of Theorems \ref{thm:action}--\ref{thm:2dsquare}--\ref{thm:GN} and Proposition \ref{prop:ex24}, it is enough to modify the definition \eqref{eq:Ad} of the extension operator $\mathcal{A}$ according to the periodicity cell of the grid under exam. As a matter of fact, this will not change the scale factor $\varepsilon^{d-1}$ in front of the functionals, but will affect the $\varepsilon$--independent coefficients in front of the various terms. Hence, once a suitable definition of $\mathcal{A}$ is given, we just need to compute these new coefficients and then repeat the arguments of the previous sections with no significant modifications. 

Of course, the identification of the extension operator has to be performed case by case. Giving up on any vain ambition to treat every grid in one shot, here we limit to consider two explicit examples of non--square two--dimensional grids: the regular triangular grid and the regular hexagonal one. For the sake of brevity, when speaking of ground states, in what follows we describe explicitly only the extension to these grids of our results on energy ground states, that for those of the action being identical.

\subsection{The regular triangular grid}
If $\G_\varepsilon$ is the two--dimensional regular triangular grid in $\R^2$ with edgelength $\varepsilon$ as in Figure \ref{fig:altre}(A), with one vertex at the origin, we can write 
\[
\G_\varepsilon=\bigcup_{i,j\in\Z} \partial T_{\varepsilon,ij}\,,
\]
where the couples of indices $(i,j)\in\Z^2$ are in one--to--one correspondence with the triangles $T_{\varepsilon,ij}$ with edges of length $\varepsilon$ in which $\G_\varepsilon$ divides the plane, and $\partial T_{\varepsilon,ij}$ denotes their boundary in $\R^2$. Given $u:\G_\varepsilon\to\R$, we define $\mathcal{A}u:\R^2\to\R$ as 
\begin{equation}
\label{eq:defAtri}
\mathcal{A}u(x,y):=\mathcal{A}_{\varepsilon,ij}u(x,y)\qquad\text{if }(x,y)\in T_{\varepsilon,ij},\text{ for some }i,j\in\Z\,,
\end{equation}
with $\mathcal{A}_{\varepsilon,ij}u:T_{\varepsilon,ij}\to\R$ being the affine interpolation on $T_{\varepsilon,ij}$ of the values of $u$ at the vertices of $T_{\varepsilon,ij}$. Moreover, we define $\widetilde{E}_{\G_\varepsilon}:H^1(\G_\varepsilon)\to\R$ as
\begin{equation}
\label{eq:Etildetri}
\widetilde{E}_{\G_\varepsilon}(u):=\f1{2\sqrt{3}}\|u'\|_{L^2(\G_\varepsilon)}^2-\f1{2\sqrt{3}p}\|u\|_{L^p(\G_\varepsilon)}^p
\end{equation}
and 
\begin{equation}
	\label{eq:Ltri}
	\mathcal{L}_{\G_\varepsilon}(u):=\f{\f12\|u\|_{L^p(\G_\varepsilon)}^p-\|u'\|_{L^2(\G_\varepsilon)}^2}{\|u\|_{L^2(\G_\varepsilon)}^2}\,.
\end{equation}
Since the analysis of \cite{ADST} generalizes to two--dimensional regular triangular grids, existence of ground states of $\widetilde{E}_{\G_\varepsilon}$ is guaranteed for every $p\in(2,4)$ and $\mu>0$, and they are one--signed solutions of
\begin{equation}
	\label{eq:nlsG2}
\begin{cases}
	u''+\f12|u|^{p-2}u=\mathcal{L}_{\G_\varepsilon}(u)u & \text{on every edge of }\G_\varepsilon\\
	\sum_{e\succ \v}\f{du}{dx_e}(\v)=0 & \text{for every vertex }\v\text{ of }\G_\varepsilon\,.
\end{cases}
\end{equation}
We then have the next theorem, that is the analogue of Theorem \ref{thm:2dsquare} for regular two--dimensional triangular grids.
\begin{theorem}
	\label{thm:gstri}
	Let $p\in(2,4)$ and $\mu>0$ be fixed. For every $\varepsilon>0$, let $\G_\varepsilon$ be the two--dimensional regular triangular grid with edgelength $\varepsilon$ and $\widetilde{E}_{\G_\varepsilon}$ be as in \eqref{eq:Etildetri}. Then 
	\begin{itemize}
		\item[$(i)$] 
		there exists $C_p>0$, depending only on $p$, such that
		\begin{equation*}
		\left|\varepsilon\widetilde{\EE}_{\G_\varepsilon}\left(\f{2\sqrt{3}\mu}\varepsilon\right)-\EE_{\R^2}(\mu)\right|\leq C_p\varepsilon\qquad\text{as }\varepsilon\to0\,;
		\end{equation*}
		\item[$(ii)$] for every positive ground state $u_\varepsilon$ of $\widetilde{E}_{\G_\varepsilon}$ in $H_{\f{2\sqrt{3}\mu}\varepsilon}^1(\G_\varepsilon)$ there exists $x_\varepsilon\in\R^2$ such that
		\begin{equation*}
		\mathcal{A} u_\varepsilon(\cdot-x_\varepsilon) \xrightarrow[]{\varepsilon\to0} \phi_\mu\quad\text{ in }H^1(\R^2)\,,
		\end{equation*}
		where the extension operator $\mathcal{A}$ is as in \eqref{eq:defAtri} and $\phi_\mu$ is the unique positive ground state of $E_{\R^2}$ at mass $\mu$ attaining its $L^\infty$ norm at the origin. Furthermore, 
		\begin{equation*}
			\lim_{\varepsilon\to0}\mathcal{L}_{\G_\varepsilon}(u_\varepsilon)=\f{\omega_\mu}{2\sqrt{3}}\,,
		\end{equation*}
		where $\mathcal{L}_{\G_\varepsilon}(u_\varepsilon)$ is defined as in \eqref{eq:Ltri} and $\omega_\mu$ is the value of $\omega$ for which $\phi_\mu$ solves \eqref{eq:NLSR2}.
	\end{itemize}
\end{theorem}
As for sharp constants in two--dimensional Gagliardo--Nirenberg inequalities, we have the following.
\begin{theorem}
	\label{thm:GNtri}
	Let $\G_1$ be the two--dimensional regular triangular grid with edgelength 1. For every $q>2$, let $K_{q,\G_1}$ be the sharp constant in the two--dimensional Gagliardo--Nirenberg inequality \eqref{eq:GNd} on $\G_1$. Then
	\begin{equation*}
	K_{q,\G_1}\geq 3^{\f{2-q}4}K_{q,\R^2}\qquad\forall q>2\,.
	\end{equation*}
	If $q\geq4$, then
	\begin{equation*}
	K_{q,\G_1}=3^{\f{2-q}4}K_{q,\R^2}
	\end{equation*}
	and $K_{q,\G_1}$ is not attained for every $q>4$. Furthemore, if $q\in(2,4)$ and there exists $u\in H^1(\G_1)$ such that $Q_{q,\G_1}(u)\geq3^{\f{2-q}4}K_{q,\R^2}$ (where $Q_{q,\G_1}$ is defined as in \eqref{eq:defQ}), then $K_{q,\G_1}$ is attained. 
\end{theorem}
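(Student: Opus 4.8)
The plan is to re-run, essentially verbatim, the three arguments already carried out on the cubic grid — Part~1 and Part~2 of the proof of Theorem~\ref{thm:GN}, the non-existence Lemma~\ref{lem:nonex}, and the existence criterion of Proposition~\ref{prop:ex24} — specialised to $d=2$, the only genuinely new input being the geometric constants attached to the regular triangular grid. Since $d=2$ forces $2^*=+\infty$ and the threshold $2+\frac4d=4$, the $d$-dimensional Gagliardo--Nirenberg inequalities \eqref{eq:GNd} are available for every $q>2$ and the interpolating estimates of Proposition~\ref{prop:GNint} are never needed; this keeps the triangular case entirely inside the ``easy'' regime of the cubic proofs. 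The first step is therefore to record the triangular analogues of Lemma~\ref{lem:GNeps} (pure scaling, giving $K_{q,\G_\varepsilon}=\varepsilon^{\frac{q-2}2}K_{q,\G_1}$ exactly as before) and of the restriction/extension estimates of Section~\ref{sec:prel}. Writing $\G_\varepsilon$ as the union of its three families of parallel lines, whose directions $d_1,d_2,d_3$ make angles $0,\frac\pi3,\frac{2\pi}3$ and whose transverse spacing is $\frac{\sqrt3}2\varepsilon$, the Riemann-sum computation of Lemma~\ref{lem:restr} yields, for a smooth $u$ restricted to $\G_\varepsilon$,
\[
\frac{\varepsilon}{2\sqrt3}\|u_\varepsilon\|_{L^q(\G_\varepsilon)}^q\to\|u\|_{L^q(\R^2)}^q,\qquad
\frac{\varepsilon}{\sqrt3}\|u_\varepsilon'\|_{L^2(\G_\varepsilon)}^2\to\|\nabla u\|_{L^2(\R^2)}^2,
\]
the gradient constant $\sqrt3$ stemming from the identity $\sum_{j=1}^3 d_j\otimes d_j=\frac32\,\mathrm{Id}$. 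These are precisely the constants dictating the coefficients $\frac1{2\sqrt3}$ in \eqref{eq:Etildetri}, and they force the anisotropy factor $\big(\varepsilon/\sqrt3\big)^{(q-2)/2}$ which, after the common $L^q$ and $L^2$ factors cancel in $Q_{q,\G_\varepsilon}$, produces the power $3^{(2-q)/4}$.

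With these estimates in hand the lower bound $K_{q,\G_1}\ge 3^{(2-q)/4}K_{q,\R^2}$ follows exactly as in Part~1 of Theorem~\ref{thm:GN}: take the (smooth, by Remark~\ref{rem:gnr2}) optimiser $u$ of $K_{q,\R^2}$, restrict it to $\G_\varepsilon$, insert the two displays above together with the scaling relation into $Q_{q,\G_\varepsilon}(u_\varepsilon)\le K_{q,\G_\varepsilon}=\varepsilon^{\frac{q-2}2}K_{q,\G_1}$, and let $\varepsilon\to0$. Non-attainment for $q>4$ reproduces Lemma~\ref{lem:nonex} word for word, the point being that the one-dimensional inequality \eqref{eq:gn1d} lives on the microscale of \emph{any} metric grid: assuming $Q_{q,\G_1}(v)=K_{q,\G_1}$ and rescaling $v=cu$ with $c\to0^+$, the competition between \eqref{eq:gn1d} and the two-dimensional bound on $\|v'\|_{L^2(\G_1)}$ forces a fixed positive power of $\|v\|_{L^2(\G_1)}=c\|u\|_{L^2(\G_1)}$ to stay bounded below, which is impossible as $c\to0^+$ precisely because $q>4=2+\frac4d$.

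The reverse inequality for $q\ge4$ is the heart of the statement and follows Part~2 of Theorem~\ref{thm:GN}. I would take a maximising sequence $(u_n)\subset H^1(\G_1)$ with $\|u_n\|_{L^2(\G_1)}=1$, normalised so that its $L^\infty$ norm is attained on the unit neighbourhood of the origin, and first prove $\|u_n'\|_{L^2(\G_1)}\to0$: boundedness in $H^1(\G_1)$ comes from \eqref{eq:gn1d}, and after extracting a weak limit $u$ with $m:=\|u\|_{L^2(\G_1)}^2$, the cases $m=1$ and $m\in(0,1)$ are excluded respectively by Lemma~\ref{lem:nonex} and by the strict subadditivity on $(0,1)^2$ of the function $f$ from the proof of Theorem~\ref{thm:GN}, invoked through Brezis--Lieb \cite{BL}. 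Once $\varepsilon_n:=\|u_n'\|_{L^2(\G_1)}\to0$, I rescale $w_n(x)=u_n(x/\varepsilon_n)\in H^1(\G_{\varepsilon_n})$, extend by $\mathcal A w_n$ as in \eqref{eq:defAtri}, and compare $Q_{q,\R^2}(\mathcal A w_n)$ with $Q_{q,\G_{\varepsilon_n}}(w_n)=\varepsilon_n^{\frac{q-2}2}Q_{q,\G_1}(u_n)$ by means of the triangular analogues of Lemmas~\ref{lem:uutL2}, \ref{lem:uutLq}(i) and \ref{lem:normA}; taking $\limsup$ gives $K_{q,\R^2}\ge 3^{-(q-2)/4}K_{q,\G_1}$, whence equality for $q>4$ and, by continuity of $q\mapsto Q_{q,\G_1}(u)$, also for $q=4$. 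The existence criterion for $q\in(2,4)$ is the transcription of Proposition~\ref{prop:ex24}: along a maximising sequence one excludes $m\in(0,1)$ by the same subadditivity and excludes $m=0$ because in that regime the rescaling-and-extension argument would yield $K_{q,\G_1}\le 3^{(2-q)/4}K_{q,\R^2}$, contradicting the hypothesis $Q_{q,\G_1}(u)\ge 3^{(2-q)/4}K_{q,\R^2}$; hence $m=1$, strong $L^2$ and (via \eqref{eq:gn1d}) $L^q$ convergence hold, and lower semicontinuity gives attainment.

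The main obstacle is the careful bookkeeping of the new constants in the extension lemma (the analogue of Lemma~\ref{lem:normA}): on each equilateral triangle $T$ of side $\varepsilon$ the affine interpolant obeys the finite-element (cotangent) identity
\[
\int_T|\nabla\mathcal A u|^2=\frac{\varepsilon}{2\sqrt3}\sum_{e\in T}\|\widetilde u'\|_{L^2(e)}^2,
\qquad\text{so that}\qquad
\|\nabla\mathcal A u\|_{L^2(\R^2)}^2=\frac{\varepsilon}{\sqrt3}\|\widetilde u'\|_{L^2(\G_\varepsilon)}^2,
\]
where the factor $2$ in passing to the grid comes from each edge lying in two triangles. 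This identity — scale-invariant in the Dirichlet energy, as it must be in dimension two — replaces the combinatorial count $N(e)=d!$ used on the cube and is the only place where the precise geometry of the cell, and ultimately the factor $3$, has to be pinned down. Everything else is a routine re-run of Section~\ref{sec:gn} with $d=2$, so the risk lies entirely in the correct evaluation of these angle- and spacing-dependent constants rather than in any new conceptual difficulty.
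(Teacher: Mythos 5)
Your proposal follows essentially the same route as the paper, which itself reduces Theorem \ref{thm:GNtri} to a re-run of Lemma \ref{lem:nonex}, Theorem \ref{thm:GN} and Proposition \ref{prop:ex24} coupled with two triangular-grid lemmas: the restriction estimates with constants $\f{\varepsilon}{2\sqrt3}$ (for $L^q$ norms) and $\f{\varepsilon}{\sqrt3}$ (for the Dirichlet integral), and the extension identity $\|\nabla\mathcal{A}u\|_{L^2(\R^2)}^2=\f{\varepsilon}{\sqrt3}\|\widetilde{u}'\|_{L^2(\G_\varepsilon)}^2$, obtained exactly as you describe from the per-triangle identity with factor $\f{\varepsilon}{2\sqrt3}$ and the fact that each edge lies on the boundary of two triangles. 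All the geometric constants you compute (including the identity $\sum_j d_j\otimes d_j=\f32\,\mathrm{Id}$ behind the gradient constant) agree with the paper's.

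One slip, though: in the $q\ge4$ step you assert that the rescaling-and-extension argument gives $K_{q,\R^2}\ge 3^{-(q-2)/4}K_{q,\G_1}$, ``whence equality''. The sign of the exponent is wrong. Your own machinery gives $Q_{q,\R^2}(\mathcal{A}w_n)\gtrsim\left(\sqrt3/\varepsilon_n\right)^{(q-2)/2}Q_{q,\G_{\varepsilon_n}}(w_n)\,(1-o(1))=3^{(q-2)/4}Q_{q,\G_1}(u_n)(1-o(1))$, so the limsup yields $K_{q,\R^2}\ge 3^{(q-2)/4}K_{q,\G_1}$, i.e. $K_{q,\G_1}\le 3^{\f{2-q}4}K_{q,\R^2}$, which is the bound that closes against the lower bound $K_{q,\G_1}\ge 3^{\f{2-q}4}K_{q,\R^2}$ to give equality. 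As literally written, $K_{q,\R^2}\ge 3^{(2-q)/4}K_{q,\G_1}$ only says $K_{q,\G_1}\le 3^{(q-2)/4}K_{q,\R^2}$, which together with the lower bound does not force equality. Since your last paragraph (the $m=0$ exclusion in the existence criterion) states the upper bound with the correct exponent, $K_{q,\G_1}\le 3^{(2-q)/4}K_{q,\R^2}$, this is evidently a transcription slip rather than a gap in the method; once corrected, the argument is the paper's.
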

As anticipated, the unique difference between Theorems \ref{thm:2dsquare}--\ref{thm:GN}, Proposition \ref{prop:ex24} and Theorems \ref{thm:gstri}--\ref{thm:GNtri} is in the numerology.  The above results follow repeating the same argument as for the square grid, coupled whenever needed with the next two lemmas.
\begin{lemma}
	Let $u\in C^1(\R^2)\cap H^2(\R^2)$. For every $\varepsilon>0$, let $u_\varepsilon:\G_\varepsilon\to\R$ be the restriction of $u$ to the regular triangular grid $\G_\varepsilon$ with edgelength $\varepsilon$. Then there exists $C>0$, depending on $u$ but not on $\varepsilon$, such that, as $\varepsilon\to0$, it holds
	\begin{align}
		&\left|\f\varepsilon{2\sqrt{3}}\|u_\varepsilon\|_{L^q(\G_\varepsilon)}^q-\|u\|_{L^q(\R^2)}^q\right|\leq C\varepsilon\,,\qquad \forall q\geq2,\label{eq:uR2Gtri}\\
		&\left|\f\varepsilon{\sqrt{3}}\|u_\varepsilon'\|_{L^2(\G_\varepsilon)}^2-\|\nabla u\|_{L^2(\R^2)}^2\right|\leq C\varepsilon\,.\label{eq:u'R2Gtri}
	\end{align}
\end{lemma}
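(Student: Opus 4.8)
The plan is to mirror the proof of Lemma~\ref{lem:restr} for the cubic grid, replacing the decomposition of $\G_\varepsilon^d$ into axis--parallel fibres by the decomposition of the triangular grid into its three families of parallel lines, and then tracking the geometric constants that arise. Write $\mathbf{e}_1,\mathbf{e}_2,\mathbf{e}_3$ for the three unit vectors parallel to the edge directions of $\G_\varepsilon$ (the three directions being pairwise at $60^\circ$). Each edge of $\G_\varepsilon$ lies on exactly one straight line parallel to some $\mathbf{e}_k$, and for fixed $k$ these lines are equispaced with transverse separation $\frac{\sqrt{3}}{2}\varepsilon$ (the height of an equilateral triangle of side $\varepsilon$). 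Thus I would write $\G_\varepsilon=\bigcup_{k=1}^3\bigcup_\ell L_{k,\ell}$, where $L_{k,\ell}$ denotes the $\ell$--th line in direction $\mathbf{e}_k$, and for each $k$ I would partition $\R^2$ into strips $A_{k,\ell}$ of transverse width $\frac{\sqrt{3}}{2}\varepsilon$, each containing exactly one line $L_{k,\ell}$.

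First, for \eqref{eq:uR2Gtri}, I would introduce for every $k$ the frozen function $w_{\varepsilon,k}\colon\R^2\to\R$ that, on each strip $A_{k,\ell}$, equals the restriction of $u$ to $L_{k,\ell}$ read off along $\mathbf{e}_k$ and held constant in the transverse direction. By construction
\[
\|w_{\varepsilon,k}\|_{L^q(\R^2)}^q=\frac{\sqrt{3}}{2}\varepsilon\,\|u_\varepsilon\|_{L^q\left(\bigcup_\ell L_{k,\ell}\right)}^q,
\]
and since each edge belongs to exactly one of the three families, summing over $k=1,2,3$ gives $\sum_{k=1}^3\|w_{\varepsilon,k}\|_{L^q(\R^2)}^q=\frac{\sqrt{3}}{2}\varepsilon\,\|u_\varepsilon\|_{L^q(\G_\varepsilon)}^q$. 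On the other hand, exactly as in the derivation of \eqref{stimachiave}, the transverse freezing error is controlled by $\varepsilon$ times a transverse derivative, so that $\bigl|\|u\|_{L^q(\R^2)}^q-\|w_{\varepsilon,k}\|_{L^q(\R^2)}^q\bigr|\lesssim_q\varepsilon\|u\|_{L^{2(q-1)}(\R^2)}^{q-1}\|\nabla u\|_{L^2(\R^2)}$ for each $k$; here all norms are finite because $H^2(\R^2)\hookrightarrow L^\infty(\R^2)\cap W^{1,r}(\R^2)$. Summing the three estimates and using $\frac{\sqrt{3}}{2}\cdot\frac13=\frac1{2\sqrt{3}}$ yields \eqref{eq:uR2Gtri}.

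For the gradient estimate \eqref{eq:u'R2Gtri} the same scheme applies, with the one genuinely new ingredient being that the three edge directions form a tight frame: for every $\mathbf{v}\in\R^2$ one has $\sum_{k=1}^3(\mathbf{e}_k\cdot\mathbf{v})^2=\frac32|\mathbf{v}|^2$. Observing that along an edge in direction $\mathbf{e}_k$ the edge--derivative of $u_\varepsilon$ is precisely the directional derivative $\partial_{\mathbf{e}_k}u$, I would freeze each $\partial_{\mathbf{e}_k}u$ along the lines $L_{k,\ell}$ as above to obtain $\sum_{k=1}^3\|v_{\varepsilon,k}\|_{L^2(\R^2)}^2=\frac{\sqrt{3}}{2}\varepsilon\,\|u_\varepsilon'\|_{L^2(\G_\varepsilon)}^2$, while the freezing error is now $\lesssim\varepsilon\|\nabla u\|_{L^2(\R^2)}\|D^2u\|_{L^2(\R^2)}$ per direction, as in Part~2 of the proof of Lemma~\ref{lem:restr} (using $u\in C^1(\R^2)\cap H^2(\R^2)$). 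Passing to the limit and invoking the frame identity gives $\sum_{k=1}^3\|v_{\varepsilon,k}\|_{L^2(\R^2)}^2\to\sum_{k=1}^3\|\partial_{\mathbf{e}_k}u\|_{L^2(\R^2)}^2=\frac32\|\nabla u\|_{L^2(\R^2)}^2$, and combining with $\frac{\sqrt{3}}{2}\cdot\frac23=\frac1{\sqrt{3}}$ produces \eqref{eq:u'R2Gtri}.

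The bulk of the argument is thus routine bookkeeping inherited from Lemma~\ref{lem:restr}. The only points requiring genuine care are geometric: verifying that in each of the three directions the grid lines are equispaced with transverse gap $\frac{\sqrt{3}}{2}\varepsilon$ and that each edge is counted exactly once, and establishing the frame identity $\sum_k(\mathbf{e}_k\cdot\mathbf{v})^2=\frac32|\mathbf{v}|^2$ that recombines the three directional contributions into the full gradient. These replace the trivial orthogonal splitting $|\nabla u|^2=|\partial_1 u|^2+|\partial_2 u|^2$ available for the square grid, and they are precisely what account for the constants $\frac1{2\sqrt{3}}$ and $\frac1{\sqrt{3}}$ appearing in \eqref{eq:uR2Gtri}--\eqref{eq:u'R2Gtri}.
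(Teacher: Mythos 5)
Your proposal is correct and follows essentially the same route as the paper's proof: the paper likewise decomposes $\G_\varepsilon$ into the three families of parallel lines (its $H_{\varepsilon,i}$, $L_{\varepsilon,i}$, $R_{\varepsilon,i}$), freezes $u$ and its edge-directional derivatives transversally to each family with $O(\varepsilon)$ error, and then recombines the three contributions. Your tight-frame identity $\sum_{k}(\mathbf{e}_k\cdot\mathbf{v})^2=\f32|\mathbf{v}|^2$ is just the normalized form of the paper's explicit computation $\|\partial_x u+\sqrt{3}\,\partial_y u\|_{L^2}^2+\|\partial_x u-\sqrt{3}\,\partial_y u\|_{L^2}^2=2\|\partial_x u\|_{L^2}^2+6\|\partial_y u\|_{L^2}^2$ added to the horizontal contribution, so the two arguments differ only in bookkeeping (perpendicular strips versus vertical strips with the compensating Jacobian factor).
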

\begin{proof}
	The argument being analogous to that in Lemma \ref{lem:restr}, we just sketch the proof of \eqref{eq:uR2Gtri} with $q=2$ and of \eqref{eq:u'R2Gtri}. In the following, we will always denote by $C$ a suitable positive constant possibly depending only on $p$ and $u$, without renaming it even when varying line by line.
	
	Here it is convenient to think of $\G_\varepsilon$ as
	\[
	\G_\varepsilon=\bigcup_{i\in\Z}\left(H_{\varepsilon,i}\cup L_{\varepsilon,i}\cup R_{\varepsilon,i}\right),
	\]
	where
	\[
	\begin{split}
	H_{\varepsilon,i}:=&\bigcup_{j\in\Z}[\varepsilon j,\varepsilon(j+1)]\times\left\{\f{\sqrt{3}}2 \varepsilon i\right\}\\
	L_{\varepsilon,i}:=&\left\{(x,\sqrt{3}(x-\varepsilon i))\,:\,x\in\R\right\}\\
	R_{\varepsilon,i}:=&\left\{(x,-\sqrt{3}(x-\varepsilon i))\,:\,x\in\R\right\}\,.
	\end{split}
	\]
	To prove \eqref{eq:uR2Gtri} with $q=2$, consider first the function $w_\varepsilon:\R^2\to\R$ given by
	\[
	w_\varepsilon(x,y):=u\left(x,\f{\sqrt{3}}2\varepsilon i\right)\qquad\forall (x,y)\in\R\times\left[\f{\sqrt{3}}2\varepsilon i,\f{\sqrt{3}}2\varepsilon(i+1)\right),\text{ for some }i\in\Z\,.
	\]
	Arguing as in Part 1 of the proof of Lemma \ref{lem:restr}, it follows that $\left|\|w_\varepsilon\|_{L^2(\R^2)}^2-\|u\|_{L^2(\R^2)}^2\right|\leq C\varepsilon$, and a direct computation shows that $\|w_\varepsilon\|_{L^2(\R^2)}^2=\f{\sqrt{3}}2\varepsilon\|u_\varepsilon\|_{L^2(\bigcup_{i\in\Z}H_{\varepsilon,i})}^2$, so that
	\begin{equation}
		\label{eq:utri1}
		\left|\f{\sqrt{3}}2\varepsilon\|u_\varepsilon\|_{L^2(\bigcup_{i\in\Z}H_{\varepsilon,i})}^2-\|u\|_{L^2(\R^2)}^2\right|\leq C\varepsilon\,.
	\end{equation}
	Define then $v_\varepsilon:\R^2\to\R$ as
	\[
	v_\varepsilon(x,y):=u_\varepsilon(x,\sqrt{3}(x-\varepsilon i))\qquad\forall (x,y)\in A_{\varepsilon,i},\text{ for some }i\in\Z\,,
	\]
	where $A_{\varepsilon,i}:=\left\{(x,y)\in\R^2\,:\,\sqrt{3}(x-\varepsilon i)\leq y<\sqrt{3}(x-\varepsilon(i+1))\right\}$. 
	Again, $\left|\|v_\varepsilon\|_{L^2(\R^2)}^2-\|u\|_{L^2(\R^2)}^2\right|\leq C \varepsilon$. Furthermore, 
	\[
	\begin{split}
	\|v_\varepsilon\|_{L^2(\R^2)}^2=&\sum_{i\in\Z}\int_{\R}\int_{\sqrt{3}(x-\varepsilon i)}^{\sqrt{3}(x-\varepsilon(i+1))}|u_\varepsilon(x,\sqrt{3}(x-\varepsilon i))|^2\,dydx\\
	=&\sqrt{3}\varepsilon\sum_{i\in\Z}\sum_{j\in\Z}\int_{\f\varepsilon2 j}^{\f\varepsilon2(j+1)}|u_\varepsilon(x,\sqrt{3}(x-\varepsilon i))|^2dx=\f{\sqrt{3}}2\varepsilon\sum_{i\in\Z}\|u_\varepsilon\|_{L^2(L_{\varepsilon,i})}^2\,,
	\end{split}
	\]
	so that
	\begin{equation}
		\label{eq:utri2}
		\left|\f{\sqrt{3}}2\varepsilon\|u_\varepsilon\|_{L^2(\bigcup_{i\in\Z}L_{\varepsilon,i})}^2-\|u\|_{L^2(\R^2)}^2\right|\leq C\varepsilon\,.
	\end{equation}
Since an analogous computation yields
\[
\left|\f{\sqrt{3}}2\varepsilon\|u_\varepsilon\|_{L^2(\bigcup_{i\in\Z}R_{\varepsilon,i})}^2-\|u\|_{L^2(\R^2)}^2\right|\leq C\varepsilon\,,
\]
summing with \eqref{eq:utri1} and \eqref{eq:utri2} gives \eqref{eq:uR2Gtri} with $q=2$.

Let us now focus on \eqref{eq:u'R2Gtri}. Arguing as in Part 2 of the proof of Lemma \ref{lem:restr}, we immediately obtain that
\begin{equation}
\label{eq:u'tri1}
\left|\f{\sqrt{3}}2\varepsilon\|u_\varepsilon'\|_{L^2(\bigcup_{i\in\Z}H_{\varepsilon,i})}^2-\|\partial_x u\|_{L^2(\R^2)}^2\right|\leq C\varepsilon\,.
\end{equation}
Let $g_\varepsilon:\R^2\to\R$ be the function
\[
g_\varepsilon(x,y):=\nabla u (x,\sqrt{3}(x-\varepsilon i))\cdot (1,\sqrt{3})\qquad\forall (x,y)\in A_{\varepsilon,i}, \text{ for some }i\in\Z\,,
\]
where $A_{\varepsilon,i}$ is as above. Then $\left|\|g_\varepsilon\|_{L^2(\R^2)}^2-\|\partial_x u+\sqrt{3}\partial_y u\|_{L^2(\R^2)}^2\right|\leq C\varepsilon$ and
\[
\begin{split}
\|g_\varepsilon\|_{L^2(\R^2)}^2=&\sum_{i\in\Z}\int_\R\int_{\sqrt{3}(x-\varepsilon i)}^{\sqrt{3}(x-\varepsilon(i+1))}|\nabla u (x,\sqrt{3}(x-\varepsilon i))\cdot (1,\sqrt{3})|^2\,dydx\\
=&4\sqrt{3}\varepsilon\sum_{i\in\Z}\sum_{j\in\Z}\int_{\f\varepsilon2 j}^{\f\varepsilon2(j+1)}\left|\nabla u (x,\sqrt{3}(x-\varepsilon i))\cdot \left(\f12,\f{\sqrt{3}}2\right)\right|^2\,dx=2\sqrt{3}\varepsilon\sum_{i\in\Z}\|u_\varepsilon'\|_{L^2(L_{\varepsilon,i})}^2\,,
\end{split}
\]
so that
\begin{equation}
	\label{u'tri2}
	\left|2\sqrt{3}\varepsilon\|u_\varepsilon'\|_{L^2(\bigcup_{i\in\Z}L_{\varepsilon,i})}^2-\|\partial_x u+\sqrt{3}\partial_y u\|_{L^2(\R^2)}^2\right|\leq C\varepsilon\,.
\end{equation}
Similarly,
\[
\left|2\sqrt{3}\varepsilon\|u_\varepsilon'\|_{L^2(\bigcup_{i\in\Z}R_{\varepsilon,i})}^2-\|\partial_x u-\sqrt{3}\partial_y u\|_{L^2(\R^2)}^2\right|\leq C\varepsilon\,,
\]
which summing with \eqref{u'tri2} gives
\[
\left|\sqrt{3}\varepsilon\|u_\varepsilon'\|_{L^2(\bigcup_{i\in\Z}(L_{\varepsilon,i}\cup R_{\varepsilon,i}))}^2-\|\partial_x u\|_{L^2(\R^2)}^2-3\|\partial_y u\|_{L^2(\R^2)}^2\right|\leq C\varepsilon\,.
\]
Coupling with \eqref{eq:u'tri1} leads to \eqref{eq:u'R2Gtri}.
\end{proof}

\begin{lemma}
	Let $\G_\varepsilon$ be the regular triangular grid with edgelength $\varepsilon$. For every $u\in H^1(\G_\varepsilon)$, it holds
	\[
	\|\nabla\mathcal{A}u\|_{L^2(\R^2)}^2=\f{\varepsilon}{\sqrt{3}}\|\widetilde{u}'\|_{L^2(\G_\varepsilon)}^2\,,
	\]
	where $\mathcal{A}u$ is as in \eqref{eq:defAtri} and $\widetilde{u}$ denotes the restriction of $\mathcal{A}u$ to $\G_\varepsilon$.
\end{lemma}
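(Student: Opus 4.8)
The plan is to prove the identity triangle by triangle, in the spirit of the proof of \eqref{eq:gradAL2} in Lemma \ref{lem:normA}, and then to sum. On each triangle $T_{\varepsilon,ij}$ the extension $\mathcal{A}u$ coincides with $\mathcal{A}_{\varepsilon,ij}u$, which is affine; hence $\nabla\mathcal{A}u$ is a constant vector $g_{ij}\in\R^2$ on $T_{\varepsilon,ij}$ and $\|\nabla\mathcal{A}u\|_{L^2(T_{\varepsilon,ij})}^2=|g_{ij}|^2\,|T_{\varepsilon,ij}|$, where $|T_{\varepsilon,ij}|=\f{\sqrt{3}}4\varepsilon^2$ is the area of an equilateral triangle of side $\varepsilon$. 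The whole matter thus reduces to expressing $|g_{ij}|^2$ in terms of the values of $u$ at the three vertices of $T_{\varepsilon,ij}$.

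To do so, I would exploit the geometry of the equilateral triangle. Let $t_1,t_2,t_3$ be the unit vectors along its three edges: as undirected directions they are pairwise at $60^\circ$, and a direct computation gives the tight--frame identity $\sum_{k=1}^3 t_k\otimes t_k=\f32\,\mathrm{Id}$, so that $\sum_{k=1}^3(g\cdot t_k)^2=\f32|g|^2$ for every $g\in\R^2$. Since $\mathcal{A}u$ is affine on $T_{\varepsilon,ij}$ and matches $u$ at the vertices, its directional derivative along an edge $e=(\v_1,\v_2)$ of length $\varepsilon$ is the constant $\f{u(\v_2)-u(\v_1)}\varepsilon$, which by \eqref{eq:util} is exactly the (constant) value of $\widetilde{u}'$ on $e$; in particular $\|\widetilde{u}'\|_{L^2(e)}^2=\varepsilon\left(\f{u(\v_2)-u(\v_1)}\varepsilon\right)^2$. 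Applying the frame identity to $g=g_{ij}$ and multiplying by the area then yields
\[
\|\nabla\mathcal{A}u\|_{L^2(T_{\varepsilon,ij})}^2=\f23\left(\f{\sqrt{3}}4\varepsilon^2\right)\sum_{e\subset T_{\varepsilon,ij}}\left(\f{u(\v_2)-u(\v_1)}\varepsilon\right)^2=\f{\sqrt{3}}6\,\varepsilon\sum_{e\subset T_{\varepsilon,ij}}\|\widetilde{u}'\|_{L^2(e)}^2\,.
\]

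Finally I would sum over $i,j\in\Z$. Since each edge of $\G_\varepsilon$ is shared by exactly two triangles of the tiling, the double sum $\sum_{i,j}\sum_{e\subset T_{\varepsilon,ij}}$ counts every edge twice, giving
\[
\|\nabla\mathcal{A}u\|_{L^2(\R^2)}^2=\f{\sqrt{3}}6\,\varepsilon\sum_{i,j\in\Z}\sum_{e\subset T_{\varepsilon,ij}}\|\widetilde{u}'\|_{L^2(e)}^2=\f{\sqrt{3}}6\,\varepsilon\cdot2\,\|\widetilde{u}'\|_{L^2(\G_\varepsilon)}^2=\f\varepsilon{\sqrt{3}}\,\|\widetilde{u}'\|_{L^2(\G_\varepsilon)}^2\,,
\]
which is the claim. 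The only genuinely geometric input is the tight--frame identity for the three equilateral directions, and it is this identity (together with the elementary edge--sharing count producing the factor $2$) that replaces the path--counting argument used for cubes in Lemma \ref{lem:normA}; everything else is bookkeeping. I therefore expect no real obstacle, the main point being simply to record the frame identity correctly and to track the constants $\tfrac{\sqrt3}{4}$ and $\tfrac32$ so that they combine into the stated $\f\varepsilon{\sqrt3}$.
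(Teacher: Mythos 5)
Your proposal is correct and follows essentially the same route as the paper: the paper's proof also reduces to the per--triangle identity $\|\nabla\mathcal{A}u\|_{L^2(T_{\varepsilon,ij})}^2=\f{\varepsilon}{2\sqrt{3}}\|\widetilde{u}'\|_{L^2(\partial T_{\varepsilon,ij})}^2$ (note $\f{\sqrt{3}}{6}\varepsilon=\f{\varepsilon}{2\sqrt{3}}$, so your constant agrees) combined with the observation that each edge of $\G_\varepsilon$ lies on the boundary of exactly two triangles. The only difference is that the paper states the per--triangle identity as a direct consequence of the definitions, while you substantiate it via the tight--frame identity $\sum_{k=1}^3 t_k\otimes t_k=\f32\,\mathrm{Id}$, which is a clean way of making that step explicit.
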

\begin{proof}
	It is a straightforward consequence of the fact that, by definition of $\mathcal{A}u$ and $\widetilde{u}$, 
	\[
	\|\nabla \mathcal{A}u\|_{L^2(T_{\varepsilon,ij})}^2=\f\varepsilon{2\sqrt{3}}\|\widetilde{u}'\|_{L^2(\partial T_{\varepsilon,ij})}^2\,,\qquad\forall (i,j)\in\Z^2\,,
	\]
	and that each edge of $\G_\varepsilon$ belongs to the boundary of two of the triangles $T_{\varepsilon,ij}$.
\end{proof}
\subsection{The regular hexagonal grid}
If $\G_\varepsilon$ is the two--dimensional regular hexagonal grid with edgelength $\varepsilon$ as in Figure \ref{fig:altre}(B), with one vertex at the origin, we can write
\[
\G_\varepsilon:=\bigcup_{i,j\in\Z}\partial H_{\varepsilon,ij}\,,
\]
where the couples of indices $(i,j)\in\Z^2$ are in one--to--one correspondence with the hexagons $H_{\varepsilon,ij}$ with edges of length $\varepsilon$ in which $\G_\varepsilon$ divides the plane, and $\partial H_{\varepsilon,ij}$ denotes their boundary in $\R^2$. For each $i,j$, we then consider $H_{\varepsilon,ij}=T_{\varepsilon,ij}^1\cup T_{\varepsilon,ij}^2\cup T_{\varepsilon,ij}^3\cup T_{\varepsilon,ij}^4$, where $T_{\varepsilon,ij}^1$ is the triangle given by the three leftmost vertices of $H_{\varepsilon,ij}$, $T_{\varepsilon,ij}^2$ is the triangle given by the bottom--left vertex and the two top vertices of $H_{\varepsilon,ij}$, $T_{\varepsilon,ij}^3$ is the triangle given by the two bottom vertices and the top--right vertex of $H_{\varepsilon,ij}$ and $T_{\varepsilon,ij}^4$ is the triangle given by the three rightmost vertices of $H_{\varepsilon,ij}$ (Figure \ref{fig:Ahex}). Given $u:\G_\varepsilon\to\R$, we then define $\mathcal{A}u:\R^2\to\R$ as 
\begin{equation}
\label{eq:defAhex}
\mathcal{A}u(x,y):=\mathcal{A}_{\varepsilon,ij}^k u\qquad\text{if }(x,y)\in T_{\varepsilon,ij}^k,\text{ for some }i,j\in\Z,\,k\in\left\{1,2,3,4\right\},
\end{equation}
with $\mathcal{A}_{\varepsilon,ij}^ku:T_{\varepsilon,ij}^k\to\R$ being the affine interpolation of the values of $u$ at the vertices of $T_{\varepsilon,ij}^k$. Moreover, we define $\widetilde{E}_{\G_\varepsilon}:H^1(\G_\varepsilon)\to\R$ as
\begin{equation}
\label{eq:Etildehex}
\widetilde{E}_{\G_\varepsilon}(u):=\f{\sqrt{3}}2\|u'\|_{L^2(\G_\varepsilon)}^2-\f{\sqrt{3}}{2p}\|u\|_{L^p(\G_\varepsilon)}^p
\end{equation}
and $\mathcal{L}_{\G_\varepsilon}(u)$ as in \eqref{eq:Ltri}. Existence of energy ground states for every $p\in(2,4)$ and $\mu>0$ on hexagonal grids can be found in \cite{ADR} and computing the associated Euler--Lagrange equations show that they are constant sign solutions to \eqref{eq:nlsG2} on $\G_\varepsilon$. 

\begin{figure}[t]
	\centering
	\includegraphics[width=0.25\textwidth]{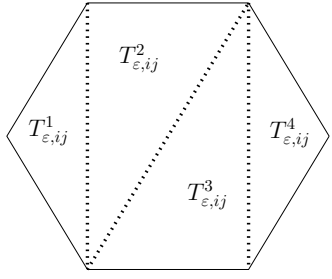}
	\caption{The splitting of an hexagonal cell in the definition of the extension operator \eqref{eq:defAhex} on two--dimensional regular hexagonal grid.}
	\label{fig:Ahex}
\end{figure}

\begin{theorem}
	\label{thm:gshex}
	Let $p\in(2,4)$ and $\mu>0$ be fixed. For every $\varepsilon>0$, let $\G_\varepsilon$ be the two--dimensional regular hexagonal grid with edgelength $\varepsilon$ and $\widetilde{E}_{\G_\varepsilon}$ be as in \eqref{eq:Etildehex}. Then 
	\begin{itemize}
		\item[$(i)$] there exists $C_p>0$, depending only on $p$, such that
		\begin{equation*}
		\left|\varepsilon\widetilde{\EE}_{\G_\varepsilon}\left(\f{2\mu}{\sqrt{3}\varepsilon}\right)-\EE_{\R^2}(\mu)\right|\leq C_p\varepsilon\qquad\text{as }\varepsilon\to0\,;
		\end{equation*}
		\item[$(ii)$] for every positive ground state $u_\varepsilon$ of $\widetilde{E}_{\G_\varepsilon}$ in $H_{\f{2\mu}{\sqrt{3}\varepsilon}}^1(\G_\varepsilon)$ there exists $x_\varepsilon\in\R^2$ such that
		\begin{equation*}
		\mathcal{A} u_\varepsilon(\cdot-x_\varepsilon) \xrightarrow[]{\varepsilon\to0} \phi_\mu\quad\text{ in }H^1(\R^2)\,,
		\end{equation*}
		where the extension operator $\mathcal{A}$ is as in \eqref{eq:defAhex} and $\phi_\mu$ is the unique positive ground state of $E_{\R^2}$ at mass $\mu$ attaining its $L^\infty$ norm at the origin.  Furthemore, 
		\begin{equation*}
			\lim_{\varepsilon\to0}\mathcal{L}_{\G_\varepsilon}(u_\varepsilon)=\f{\sqrt{3}\omega_\mu}2\qquad\text{as }\varepsilon\to0\,,
		\end{equation*}
		where $\mathcal{L}_{\G_\varepsilon}(u_\varepsilon)$ is defined as in \eqref{eq:Ltri} and $\omega_\mu$ is the value of $\omega$ for which $\phi_\mu$ solves \eqref{eq:NLSR2}.
	\end{itemize}
\end{theorem}
\begin{theorem}
	\label{thm:GNhex}
	Let $\G_1$ be the two--dimensional regular hexagonal grid with edgelength 1. For every $q>2$, let $K_{q,\G_1}$ be the sharp constant in the two--dimensional Gagliardo--Nirenberg inequality \eqref{eq:GNd} on $\G_1$. Then
	\begin{equation*}
	K_{q,\G_1}\geq 3^{\f{q-2}4}K_{q,\R^2}\qquad\forall q>2\,.
	\end{equation*}
	If $q\geq4$, then
	\begin{equation*}
	K_{q,\G_1}=3^{\f{q-2}4}K_{q,\R^2}
	\end{equation*}
	and $K_{q,\G_1}$ is not attained for every $q>4$. Furthemore, if $q\in(2,4)$ and there exists $u\in H^1(\G_1)$ such that $Q_{q,\G_1}(u)\geq3^{\f{q-2}4}K_{q,\R^2}$ (where $Q_{q,\G_1}$ is defined as in \eqref{eq:defQ}), then $K_{q,\G_1}$ is attained. 
\end{theorem}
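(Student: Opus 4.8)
The plan is to run the proofs of Theorem~\ref{thm:GN} and Proposition~\ref{prop:ex24} essentially verbatim, replacing the cubic restriction and extension estimates of Lemmas~\ref{lem:restr} and \ref{lem:normA} by their hexagonal counterparts. Since $\G_\varepsilon$ is two--dimensional, every use of the $d$--dimensional inequality \eqref{eq:dgnGe} or of Proposition~\ref{prop:GNint} collapses to a two--dimensional Gagliardo--Nirenberg inequality, while the one--dimensional inequality \eqref{eq:gn1d}, being a pure microscale statement about single edges, holds on $\G_1$ unchanged; the same is true of the edge--by--edge comparisons between $\|u\|_{L^q(\G_\varepsilon)}$ and $\|\widetilde u\|_{L^q(\G_\varepsilon)}$ in Lemmas~\ref{lem:uutL2}--\ref{lem:uutLq}, which only interpolate along one edge at a time. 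The hexagonal geometry therefore enters only through the values of the dimensional constants, and it is precisely these that replace the cubic factor $d^{\f{(d-2)(q-2)}4}$ (equal to $1$ when $d=2$) by $3^{\f{q-2}4}$.

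First I would establish the hexagonal restriction estimates: for $u\in C^1(\R^2)\cap H^2(\R^2)\cap L^\infty(\R^2)$ with restriction $u_\varepsilon$ to $\G_\varepsilon$,
\[
\left|\f{\sqrt3}2\,\varepsilon\|u_\varepsilon\|_{L^q(\G_\varepsilon)}^q-\|u\|_{L^q(\R^2)}^q\right|\leq C\varepsilon,\qquad
\left|\sqrt3\,\varepsilon\|u_\varepsilon'\|_{L^2(\G_\varepsilon)}^2-\|\nabla u\|_{L^2(\R^2)}^2\right|\leq C\varepsilon,
\]
following the scheme of Lemma~\ref{lem:restr} but decomposing $\G_\varepsilon$ into its three families of parallel edges. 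The coefficient $\f{\sqrt3}2$ is forced by the edge--length density $\f2{\sqrt3\,\varepsilon}$ of the hexagonal grid (each cell has area $\f{3\sqrt3}2\varepsilon^2$ and carries three edges), and the gradient coefficient $\sqrt3$ arises from projecting $\nabla u$ onto the edge directions. Together with the hexagonal form of the $\mathcal A$--estimates of Lemma~\ref{lem:normA} (the same Jensen argument applied to the four triangles of each cell in \eqref{eq:defAhex}), these furnish every ingredient of the upper and lower inequalities.

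The single genuinely new computation, and the step I expect to be the main obstacle, is the gradient--extension identity
\[
\|\nabla\mathcal Au\|_{L^2(\R^2)}^2=\sqrt3\,\varepsilon\,\|\widetilde u'\|_{L^2(\G_\varepsilon)}^2,\qquad u\in H^1(\G_\varepsilon),
\]
the analogue of \eqref{eq:gradAL2}. The difficulty is that the triangulation \eqref{eq:defAhex} of each hexagon introduces three interior diagonals that are not edges of $\G_\varepsilon$, so $\|\nabla\mathcal Au\|_{L^2}^2$ could a priori depend on the jumps of $u$ across these diagonals. I would evaluate $\int_{T_{\varepsilon,ij}^k}|\nabla\mathcal Au|^2$ via the cotangent (stiffness--matrix) formula for the Dirichlet energy of an affine interpolant, so that each edge of each triangle is weighted by the cotangent of the opposite angle. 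A direct angle computation on the regular hexagon then shows that every interior diagonal receives either a pair of right angles (cotangent $0$) or a pair of angles $60^\circ,120^\circ$ whose cotangents $\pm\f1{\sqrt3}$ cancel across the two triangles sharing it, whereas every true edge of $\G_\varepsilon$ is opposite a $30^\circ$ angle in each of the two cells it borders and hence collects the total weight $\sqrt3$; scale invariance of the planar Dirichlet energy yields the displayed identity, and \eqref{eq:util'} upgrades it to $\|\nabla\mathcal Au\|_{L^2}^2\le\sqrt3\,\varepsilon\|u'\|_{L^2(\G_\varepsilon)}^2$.

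With these estimates in place the four assertions follow as in Section~\ref{sec:gn}. For the lower bound I would restrict an $\R^2$--optimizer $u$ of $Q_{q,\R^2}$ to $\G_\varepsilon$ and combine the scaling of Lemma~\ref{lem:GNeps} (valid on any grid) with the restriction estimates to find $\varepsilon^{-(\f q2-1)}Q_{q,\G_\varepsilon}(u_\varepsilon)\to 3^{\f{q-2}4}K_{q,\R^2}$, giving $K_{q,\G_1}\ge 3^{\f{q-2}4}K_{q,\R^2}$. Non--attainment for $q>4=2+\f4d$ is Lemma~\ref{lem:nonex} word for word, as it uses only homogeneity and \eqref{eq:gn1d}. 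For the equality when $q\ge4$ I would take a maximizing sequence $(u_n)$ for $Q_{q,\G_1}$, normalized in $L^2$ and attaining its $L^\infty$ norm near the origin, prove $\|u_n'\|_{L^2(\G_1)}\to0$ through the Brezis--Lieb splitting and the strict subadditivity on $(0,1)^2$ of $f(x,y)=x\,y^{\f{q-2}2}+(1-x)(1-y)^{\f{q-2}2}$ exactly as in \eqref{eq:u'to0}, then rescale $w_n(x)=u_n(x/\varepsilon_n)$ with $\varepsilon_n=\|u_n'\|_{L^2(\G_1)}$, extend by $\mathcal A$, and insert the hexagonal $L^2$, $L^q$ and gradient estimates into $Q_{q,\R^2}(\mathcal Aw_n)$ to obtain $K_{q,\R^2}\ge 3^{-\f{q-2}4}K_{q,\G_1}$, the reverse inequality. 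Finally the criterion for $q\in(2,4)$ is proved as in Proposition~\ref{prop:ex24}: a maximizing sequence converges weakly to some $u$ of mass $m\in[0,1]$, the case $m\in(0,1)$ is excluded by the same strict subadditivity, the case $m=0$ would force $K_{q,\G_1}\le 3^{\f{q-2}4}K_{q,\R^2}$ by the rescaling argument and thus contradict the hypothesis $Q_{q,\G_1}(u)\ge 3^{\f{q-2}4}K_{q,\R^2}$, so $m=1$ and, by \eqref{eq:gn1d} and lower semicontinuity, the limit is an optimizer.
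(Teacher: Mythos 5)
Your proposal is correct and takes essentially the same route as the paper: the paper likewise reduces Theorem \ref{thm:GNhex} to rerunning the cubic-grid arguments of Theorem \ref{thm:GN} and Proposition \ref{prop:ex24}, supplemented by exactly the two hexagonal lemmas you identify — the restriction estimates with coefficients $\frac{\sqrt{3}}{2}\varepsilon$ (for $L^q$ norms) and $\sqrt{3}\varepsilon$ (for the gradient), and the identity $\|\nabla\mathcal{A}u\|_{L^2(\R^2)}^2=\sqrt{3}\,\varepsilon\,\|\widetilde{u}'\|_{L^2(\G_\varepsilon)}^2$ — with matching constants throughout. Your cotangent-formula verification of the gradient identity (cancellation of the interior-diagonal weights, total weight $\sqrt{3}$ on each true edge shared by two hexagons) is a correct and more explicit account of what the paper merely asserts as a ``direct consequence'' of the triangulation; the only detail worth adding is the paper's one-line continuity-in-$q$ reduction for the endpoint $q=4$, since the maximizing-sequence argument excludes the case $m=1$ via non-attainment, which is available only for $q>4$.
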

As in the previous case, the proof of these results combines the discussion performed for square grids with the next two lemmas.
\begin{lemma}
	Let $u\in C^1(\R^2)\cap H^2(\R^2)$. For every $\varepsilon>0$, let $u_\varepsilon:\G_\varepsilon\to\R$ be the restriction of $u$ to the regular hexagonal grid $\G_\varepsilon$ with edgelength $\varepsilon$. Then there exists $C>0$, depending on $u$ but not on $\varepsilon$, such that, as $\varepsilon\to0$, it holds
	\begin{align*}
		&\left|\f{\sqrt{3}}2\varepsilon\|u_\varepsilon\|_{L^q(\G_\varepsilon)}^q-\|u\|_{L^q(\R^2)}^q\right|\leq C\varepsilon\,,\qquad \forall q\geq2,\\
		&\left|\sqrt{3}\varepsilon\|u_\varepsilon'\|_{L^2(\G_\varepsilon)}^2-\|\nabla u\|_{L^2(\R^2)}^2\right|\leq C\varepsilon\,.
	\end{align*}
\end{lemma}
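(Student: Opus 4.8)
The plan is to reproduce, \emph{mutatis mutandis}, the argument of Lemma~\ref{lem:restr} and of the preceding lemma for the triangular grid; the only genuinely new point is the combinatorics of the honeycomb. Denote by $\hat t_1,\hat t_2,\hat t_3$ the unit vectors tangent to the three families $E_1,E_2,E_3$ of edges of $\G_\varepsilon$, so that $\G_\varepsilon=E_1\cup E_2\cup E_3$ and, on an edge $e\in E_\ell$, the tangential derivative of $u_\varepsilon$ is $u_\varepsilon'=\nabla u\cdot\hat t_\ell$. Because the hexagons are regular, these three directions are equispaced at $60^\circ$ (as undirected lines), whence the isotropy identity
\[
\sum_{\ell=1}^3(v\cdot\hat t_\ell)^2=\f32|v|^2\qquad\forall v\in\R^2 ,
\]
which is what will convert the three directional contributions into $\|\nabla u\|_{L^2(\R^2)}^2$.

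For the $L^q$ estimate I would argue on all edges at once. To each edge $e$ of $\G_\varepsilon$ (of any direction) I attach a parallelogram cell $P_e$ having $e$ as one side, with the cells $\{P_e\}_e$ tiling $\R^2$ and each of area $|P_e|=\f{\sqrt3}2\varepsilon^2$. This area is exactly the reciprocal of the edge-length density $\f2{\sqrt3\,\varepsilon}$ of the honeycomb (three edges per hexagon, each hexagon of area $\f{3\sqrt3}2\varepsilon^2$), so that a cell of area $\f{\sqrt3}2\varepsilon^2$ corresponds to precisely one edge. Setting $w_\varepsilon$ on $P_e$ equal to the values of $u$ along $e$, extended constantly in the direction of the side of $P_e$ transverse to $e$, a change of variables gives the exact identity $\|w_\varepsilon\|_{L^q(\R^2)}^q=\f{\sqrt3}2\varepsilon\|u_\varepsilon\|_{L^q(\G_\varepsilon)}^q$, while $\|u-w_\varepsilon\|_{L^q(\R^2)}\le C\varepsilon$ follows from $u\in H^1(\R^2)\cap L^\infty(\R^2)$ exactly as in \eqref{stimachiave}. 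The triangle inequality then yields the first inequality of the lemma.

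For the gradient I would instead work family by family, since the integrand is now directional. Splitting $\G_\varepsilon$ into $E_1,E_2,E_3$, I attach to each $e\in E_\ell$ a parallelogram $P_e^\ell$ of area $\f{3\sqrt3}2\varepsilon^2$ (the reciprocal of the per-direction edge-length density $\f2{3\sqrt3\,\varepsilon}$), with $\{P_e^\ell\}_{e\in E_\ell}$ tiling $\R^2$, and let $w_{\varepsilon,\ell}$ equal $\nabla u\cdot\hat t_\ell$ along $e$ extended constantly transverse to $e$. As above one gets $\|w_{\varepsilon,\ell}\|_{L^2(\R^2)}^2=\f{3\sqrt3}2\varepsilon\|u_\varepsilon'\|_{L^2(E_\ell)}^2$ and, using $u\in C^1(\R^2)\cap H^2(\R^2)$, $\bigl|\,\|w_{\varepsilon,\ell}\|_{L^2(\R^2)}^2-\|\nabla u\cdot\hat t_\ell\|_{L^2(\R^2)}^2\bigr|\le C\varepsilon$. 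Summing over $\ell$ and invoking the isotropy identity turns $\sum_\ell\|\nabla u\cdot\hat t_\ell\|_{L^2(\R^2)}^2$ into $\f32\|\nabla u\|_{L^2(\R^2)}^2$, so that $\f{3\sqrt3}2\varepsilon\|u_\varepsilon'\|_{L^2(\G_\varepsilon)}^2=\f32\|\nabla u\|_{L^2(\R^2)}^2+O(\varepsilon)$, which is the second inequality after dividing by $\f32$.

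The main obstacle is precisely the construction of the tilings $\{P_e\}$ and $\{P_e^\ell\}$: unlike the triangular grid, where the edges of each family lie on \emph{complete} straight lines $L_{\varepsilon,i}$ and the strips $A_{\varepsilon,i}$ tile the plane automatically, in the honeycomb the edges of a fixed direction are segments separated by periodic gaps, so no such lines exist. One must therefore exhibit an explicit $\Z^2$-periodic assignment of one cell per edge covering $\R^2$ without overlaps and carrying the correct transverse width; the rhombille (dual) structure of the honeycomb makes this possible, and the area bookkeeping above fixes the widths. Once the tilings are in place, all the error estimates are the routine $O(\varepsilon)$ bounds of Lemma~\ref{lem:restr}, the case $q>2$ of the $L^q$ bound being handled via \eqref{stimachiave} and $u\in L^\infty(\R^2)$.
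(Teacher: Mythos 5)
Your proof is correct in substance, but it takes a genuinely different route from the paper's. The paper dispatches this lemma in one sentence, by viewing the hexagonal grid as a subgraph of the regular triangular grid with the same edgelength: the honeycomb consists of a periodic subset of the edges on each of the lines $H_{\varepsilon,i}$, $L_{\varepsilon,i}$, $R_{\varepsilon,i}$, so the strip-by-strip argument of the triangular lemma applies unchanged, with only the per-line edge density (hence the constants) rescaled. (Incidentally, the paper's phrase ``removal of one edge over three'' is a slip: on each line the honeycomb \emph{retains} one edge in three, i.e.\ density $\f13$ rather than $\f23$, and it is precisely this factor $\f13$ applied to the triangular constants $\f{\varepsilon}{2\sqrt3}$ and $\f{\varepsilon}{\sqrt3}$ that yields the stated constants $\f{\sqrt3}{2}\varepsilon$ and $\sqrt3\,\varepsilon$; the factor $\f23$ would not.) You instead argue directly on the honeycomb: a one-cell-per-edge tiling with the correct area ($\f{\sqrt3}2\varepsilon^2$ globally, $\f{3\sqrt3}2\varepsilon^2$ per direction family), exact change-of-variables identities for the transversally-constant extensions, the routine $O(\varepsilon)$ error bounds of Lemma \ref{lem:restr}, and the isotropy identity $\sum_{\ell}(v\cdot\hat t_\ell)^2=\f32|v|^2$ — which is exactly what the paper's triangular computation encodes when it combines $\|\partial_xu\pm\sqrt3\,\partial_yu\|_{L^2(\R^2)}^2$ with the horizontal family. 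The paper's route buys brevity (the triangular lemma is already proved); yours buys self-containedness and would extend to periodic graphs that are not subgraphs of a previously treated grid. Two remarks on the point you yourself flag as the main obstacle. First, the rhombille tiling proper has each honeycomb edge as a \emph{diagonal} of its rhombus, not a side, so for those cells the transverse fibers over $e$ have non-constant length and the exact identity $\|w_\varepsilon\|_{L^q(\R^2)}^q=\f{\sqrt3}2\varepsilon\|u_\varepsilon\|_{L^q(\G_\varepsilon)}^q$ fails; you need cells with $e$ as a side (these exist, e.g.\ by splitting each hexagon into three rhombi bounded by its edges and choosing a perfect matching between rhombi and edges in the resulting $2$-regular bipartite incidence graph), or — simpler — run the $L^q$ estimate family by family exactly as you do for the gradient, which avoids the all-edges tiling altogether. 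Second, the per-family tiling can be written down explicitly, closing the gap: the horizontal edges form a single orbit of the lattice $\Lambda$ generated by $(3\varepsilon,0)$ and $\left(\f32\varepsilon,\f{\sqrt3}2\varepsilon\right)$, and the parallelogram spanned by $(\varepsilon,0)$ and $\left(\f32\varepsilon,\f{3\sqrt3}2\varepsilon\right)$ (area $\f{3\sqrt3}2\varepsilon^2$, having the edge as a side) is a fundamental domain for $\Lambda$, so its $\Lambda$-translates tile $\R^2$ with exactly one cell per horizontal edge; the other two families follow by rotation.
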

\begin{proof}
	It is evident from the fact that the regular hexagonal grid is the subset of the regular  triangular one given by the removal of one edge over three on each $H_{\varepsilon,i}$, $L_{\varepsilon,i}$ and $R_{\varepsilon,i}$, for every $i\in\Z$.
\end{proof}

\begin{lemma}
	Let $\G_\varepsilon$ be the regular hexagonal grid with edgelength $\varepsilon$. For every $u\in H^1(\G_\varepsilon)$, it holds
	\[
	\|\nabla\mathcal{A}u\|_{L^2(\R^2)}^2=\sqrt{3}\varepsilon\|\widetilde{u}'\|_{L^2(\G_\varepsilon)}^2\,,
	\]
	where $\mathcal{A}u$ is as in \eqref{eq:defAhex} and $\widetilde{u}$ denotes the restriction of $\mathcal{A}u$ to $\G_\varepsilon$.
\end{lemma}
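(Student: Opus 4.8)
The plan is to reduce everything to a single triangle and then to a single hexagonal cell, exactly in the spirit of the equilateral computation underlying the triangular case. Since $\mathcal{A}u$ is affine on each sub-triangle $T_{\varepsilon,ij}^k$, its gradient is constant there, and the Dirichlet energy is governed by the three vertex values through the classical cotangent (stiffness-matrix) identity: if $P_1,P_2,P_3$ are the vertices of a triangle $T$, with $\theta_k$ the angle at the vertex opposite the edge $\{i,j\}$ and $\mathcal{A}u(P_i)=u_i$, then
\[
\|\nabla\mathcal{A}u\|_{L^2(T)}^2=\f12\sum_{\{i,j\}}\cot\theta_k\,(u_i-u_j)^2\,.
\]
One checks this on the equilateral triangle, where every angle is $\pi/3$ and $\cot(\pi/3)=1/\sqrt3$, which reproduces the coefficient $\varepsilon/(2\sqrt3)$ of the triangular lemma.

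First I would fix one cell $H_{\varepsilon,ij}$ and sum the identity over its four sub-triangles $T_{\varepsilon,ij}^1,\dots,T_{\varepsilon,ij}^4$, regrouping the outcome by edges. The triangulation chosen in \eqref{eq:defAhex} produces three internal diagonals, each shared by exactly two of the four triangles, whereas each of the six sides of $H_{\varepsilon,ij}$ (the edges of $\G_\varepsilon$) lies on the boundary of exactly one triangle. The decisive point is that every internal diagonal contributes nothing: for each diagonal the two angles facing it in the two adjacent triangles are supplementary — they equal $2\pi/3$ and $\pi/3$ for the two short diagonals, and $\pi/2$ and $\pi/2$ for the long one — so their cotangents cancel. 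After this cancellation only the six grid edges survive, and for each of them the facing angle equals $\pi/6$, with $\cot(\pi/6)=\sqrt3$. This yields
\[
\|\nabla\mathcal{A}u\|_{L^2(H_{\varepsilon,ij})}^2=\f{\sqrt3}2\sum_{e\subset\partial H_{\varepsilon,ij}}\bigl(u(\v_2)-u(\v_1)\bigr)^2\,,
\]
the sum being over the six sides $e=(\v_1,\v_2)$ of the cell.

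Next I would rewrite the right-hand side in terms of $\widetilde{u}$. Since $\widetilde{u}$ is the linear interpolation of $u(\v_1),u(\v_2)$ on each edge $e$ of length $\varepsilon$, one has $\|\widetilde{u}'\|_{L^2(e)}^2=(u(\v_2)-u(\v_1))^2/\varepsilon$, so that the previous display becomes $\|\nabla\mathcal{A}u\|_{L^2(H_{\varepsilon,ij})}^2=\tfrac{\sqrt3\,\varepsilon}{2}\,\|\widetilde{u}'\|_{L^2(\partial H_{\varepsilon,ij})}^2$. Summing over all cells $(i,j)\in\Z^2$ and using that each edge of $\G_\varepsilon$ belongs to the boundary of exactly two hexagons introduces a factor $2$, which cancels the $1/2$ and produces $\|\nabla\mathcal{A}u\|_{L^2(\R^2)}^2=\sqrt3\,\varepsilon\,\|\widetilde{u}'\|_{L^2(\G_\varepsilon)}^2$, as claimed.

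The main obstacle is precisely the cancellation of the diagonal terms: the sum collapses onto the edges of $\G_\varepsilon$ only because of the special supplementary-angle configuration created by this particular triangulation of the regular hexagon; absent it, the internal diagonals would leave residual contributions that do not correspond to any quantity on the grid. I would therefore carry out the explicit angle computation — placing $H_{\varepsilon,ij}$ with vertices at $\varepsilon(\pm1,0)$ and $\varepsilon(\pm\tfrac12,\pm\tfrac{\sqrt3}{2})$ — as the one genuinely geometric step, the rest being bookkeeping identical to the triangular case.
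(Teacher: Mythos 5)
Your proof is correct and follows essentially the same route as the paper's (which states it in a single line): a per--hexagon identity $\|\nabla\mathcal{A}u\|_{L^2(H_{\varepsilon,ij})}^2=\frac{\sqrt{3}\,\varepsilon}{2}\|\widetilde{u}'\|_{L^2(\partial H_{\varepsilon,ij})}^2$, followed by the observation that every edge of $\G_\varepsilon$ lies on the boundary of exactly two hexagons, which turns the factor $\frac{\sqrt{3}\,\varepsilon}{2}$ into $\sqrt{3}\,\varepsilon$. Your cotangent--formula bookkeeping — the supplementary--angle cancellation on the three internal diagonals (angles $2\pi/3,\pi/3$; $\pi/2,\pi/2$; $\pi/3,2\pi/3$) and the coefficient $\cot(\pi/6)=\sqrt{3}$ facing each of the six sides — is exactly right and supplies the computation that the paper leaves implicit as ``a direct consequence of the definition of $\mathcal{A}u$.''
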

\begin{proof}
	It is again a direct consequence of the definition of $\mathcal{A}u$ and the fact that each edge of $\G_\varepsilon$ belongs to exactly two of the hexagons in which the grid divides the plane.
\end{proof}

\end{document}